\documentclass[12pt]{article}

\usepackage{graphicx}
\usepackage{authblk}
\usepackage{amssymb,amsmath,amsthm}
\usepackage{subfig}
\usepackage{enumerate}
\usepackage[psdextra]{hyperref}
\hypersetup{colorlinks,linkcolor=blue,urlcolor=blue,citecolor=blue}
\usepackage[title]{appendix}
\usepackage[
    backend=biber,
    style=numeric,
    giveninits=true,
    sorting=nyt,
    maxbibnames=99
]{biblatex}

\DeclareFieldFormat[article]{volume}{\textbf{#1}}
\newtheorem{theorem}{Theorem}[section]
\newtheorem{proposition}[theorem]{Proposition}
\newtheorem{corollary}[theorem]{Corollary}
\newtheorem{lemma}[theorem]{Lemma}
\newtheorem{example}[theorem]{Example}
\newtheorem{remark}[theorem]{Remark}

\makeatletter
\def\th@plain{%
  \thm@notefont{}%
  \itshape
}
\def\th@definition{%
  \thm@notefont{}%
  \normalfont
}
\makeatother

\newcommand{\bu}{\mathbf{u}}
\newcommand{\bv}{\mathbf{v}}

\newcommand{\bq}{\mathbf{q}}
\newcommand{\br}{\mathbf{r}}

\DeclareMathOperator{\diag}{diag}

\title{The Hessian of Planar Central Configurations in Pair Space: Decomposition, 
Morse Index and Symmetry Reduction}

\author{Manuele Santoprete Santoprete}
\affil{Department of Mathematics, Wilfrid Laurier University,\\ 75 University Ave. West, Waterloo,
ON, Canada\\ \texttt{msantoprete@wlu.ca}}
\date{\today}

\begin{document}

\maketitle

\begin{abstract}

We give a variational derivation of the central configuration equations in
pair space, where the relative position vectors between pairs of bodies
serve as the primary variables. Vector Lagrange multipliers enforce the
linear triangle relations required for these pair vectors to be realizable
in the plane. For arbitrary $N$, we show that the constrained Hessian decomposes as
$H_{\mathcal C}=L^\Delta+\widetilde L$, where the gap Laplacian
$L^\Delta$ is positive semidefinite and the transverse Laplacian
$\widetilde L$ is signed and contains all possible negative directions.

For non-collinear planar four-body central configurations, the signed part
has rank two. This yields a new proof of the known bound that the Morse
index is at most two. We then characterize positive definiteness and
degeneracy of the Hessian by the eigenvalues of an explicit \(2\times2\)
matrix or, equivalently, by two generalized eigenvalues of a pencil of
symmetric matrices. We illustrate the general criterion using the
equal-mass square central configuration.

For reflection-symmetric configurations, the generalized eigenvalue problem
decomposes into two independent smaller problems. This reduction applies to
kite and isosceles trapezoidal configurations. For the trapezoid, however,
the reflection acts non-orthogonally on the pair coordinates,  giving a different,
non-orthogonal reduction.
Applying the reflection-symmetry reduction to
the rhombus family, we prove that the Hessian is positive definite modulo
the rotational zero mode; equivalently, every rhombus central configuration
is nondegenerate modulo rotations and has Morse index zero.
\end{abstract}

\noindent\textbf{Keywords:}
central configurations; \(N\)-body problem; pair space; 
Morse index; reflection symmetry.

\medskip
\noindent\textbf{2020 Mathematics Subject Classification:}
Primary 70F10; Secondary 58E05.
\tableofcontents
\section{Introduction}
\label{sec:introduction}

Central configurations play a fundamental role in the planar Newtonian
\(N\)-body problem.  They generate the homographic solutions, describe the
possible limiting shapes of total-collision motions, and enter the study of
the topology of the energy--momentum level sets.  Moulton proved that, for
each ordering of \(N\) positive masses on a line, there is a unique
collinear central configuration, up to scaling.  The corresponding
questions for non-collinear configurations are much less well understood.
In particular, Smale included the finiteness of the number of central
configurations, for fixed positive masses and modulo the natural
similarities, among his problems for the twenty-first century
\cite{smale1998mathematical}. 
Finiteness has been established in several important cases, notably for
the planar four-body problem by Hampton and Moeckel
\cite{hampton2006finiteness}, and for the planar five-body problem by
Albouy and Kaloshin \cite{albouy2012finiteness}, who proved finiteness for
all positive masses outside a codimension-two algebraic subvariety of the
mass space. Building on Albouy and Kaloshin's method of $zw$-diagrams,
Chang and Chen \cite{ChangChen2024,ChangChen2025} made substantial algorithmic progress on
the planar six-body problem, reducing the finiteness question to a small
number of unresolved cases.
The general problem, however, remains open.

Morse theory provides a bridge between the local second-variation
properties of central configurations and global information on their
number. After fixing the center of mass at the origin and normalizing the
moment of inertia, thereby removing translations and scaling, planar
central configurations correspond to critical points of the induced
Newtonian potential. Configurations that differ by a rotation are
considered equivalent.

A planar central configuration is called {\bf nondegenerate} if the kernel of
the Hessian of the induced potential consists precisely of the tangent
direction to its orbit under planar rotations. Equivalently, after
identifying configurations that differ by a rotation, the Hessian has no
zero eigenvalues. The Morse index is the number of negative eigenvalues
after this rotational direction has been omitted.

When all critical points are nondegenerate in this sense, the Morse
inequalities relate the topology of the space of configurations modulo
rotation to the number of critical points of each Morse index. The Morse
index of the reduced Hessian is therefore the key local ingredient in
such arguments, yielding lower bounds in general and, in special cases
such as the collinear problem \cite{smale1970topology}, convex
cocircular configurations \cite{santoprete2021uniqueness}, or the
trapezoidal problem \cite{santoprete2021uniqueness1}, exact counts.

This program was initiated by Smale \cite{smale1970topology} in the
collinear case, where Morse theory yields an exact count, and was extended
by Palmore
\cite{palmore1973classifying,palmore1975classifying,palmore1975classifyingIII}
to the planar problem, where the resulting Morse inequalities give lower
bounds rather than an exact count; it has since been developed in several
directions.

The possible presence of additional zero modes is equally significant.
The center-of-mass and moment-of-inertia constraints remove the
translational and scaling directions, while identifying configurations
that differ by a planar rotation removes the remaining rotational
symmetry. If the Hessian has a nontrivial kernel after these symmetry
directions have been removed, then the corresponding central
configuration is Morse degenerate. Such degeneracy prevents the direct
application of the standard Morse inequalities and can signal a
parameter value at which a branch of central configurations bifurcates
as the masses, or other parameters, vary. Mass-parameter bifurcations
of central configurations and relative equilibria have been studied in
\cite{MeyerSchmidt1988,MeyerSchmidt1988b,MeyerSchmidt1989,
rusu2016bifurcations,liu2026concave}.

Consequently, a method that determines both the Morse index and the
dimension of the kernel of the Hessian after the symmetry directions have
been removed provides the local information needed for both
Morse-theoretic counting arguments and the study of bifurcations in
parameter-dependent families.

These issues are particularly relevant in the planar four-body problem in
connection with the Sim\'o--Yoccoz conjecture. The conjecture predicts that,
for every choice of four positive masses and each prescribed cyclic ordering
of the bodies, there is, up to translations, rotations, and scaling, a
unique strictly convex planar central configuration realizing that ordering
\cite{santoprete2021uniqueness}. Existence for a prescribed cyclic ordering
is classical; the central difficulty is uniqueness. From this perspective,
degeneracy is especially important: it is the local mechanism through which
solution branches can potentially meet, bifurcate, or change their number as
the masses vary. Substantial partial results are known for symmetric or
geometrically restricted families, including kite and isosceles trapezoidal
configurations, cocircular configurations, and families studied by
bifurcation methods; see, for example,
\cite{leandro2003finiteness,albouy2008symmetry,perez2007convex,
fernandes2017convex,santoprete2021uniqueness,
santoprete2021uniqueness1,roberts2025kite,corbera2014central}.
Complementing these geometric, analytic, and
bifurcation-based results, rigorous computer-assisted methods using interval arithmetic have
established the conjecture on explicit regions of the mass space
\cite{sun2023uniqueness}.

Although these results provide strong evidence for the Sim\'o--Yoccoz
conjecture, a general proof for arbitrary positive masses remains
unavailable.  A possible approach is to study the potential on the
normalized space of convex configurations with fixed cyclic ordering.  If
all critical points were nondegenerate minima, suitably sharp
Morse-theoretic information could imply uniqueness.  This approach is
realized in special cases, including the co-circular and trapezoidal
problems \cite{santoprete2021uniqueness,santoprete2021uniqueness1}.  Thus, index and
degeneracy do not provide just local information, but are potential tools for a possible approach to the Sim\'o--Yoccoz conjecture.

The purpose of this paper is to develop a new method for determining
the index and degeneracy of the constrained Hessian at planar central
configurations.

A standard approach to planar central configurations, particularly in the
four-body problem, is to use the mutual distances as basic variables.
In these coordinates, the central-configuration equations take a
relatively simple form, and many results have been obtained by exploiting
this description. 
The mutual distances, however, are not independent. For four points in the
plane, a tuple of six pairwise distances is realizable by an actual planar
configuration if and only if the associated Cayley--Menger determinant
vanishes and each of the four triples of distances satisfies the ordinary
triangle inequality \cite{CorsRoberts2012}.
Thus, a variational treatment in mutual-distance
coordinates requires the use of the Cayley--Menger relation as a nonlinear
constraint.  While effective for the central-configuration equations
themselves, this formulation becomes less convenient at the level of the
second variation: the constrained Hessian contains contributions involving
the  second derivatives of the Cayley--Menger determinant, whose
explicit expressions are cumbersome.

To avoid this difficulty, following an idea introduced by Drory
\cite{drory2025pair,drory2025pairII}, we work instead in the space of relative position vectors
\[
	{\bf q}_{ij}={\bf r}_i-{\bf r}_j.
\]
These pair vectors retain the direct geometric meaning of the
mutual-distance variables, but their realizability is described by linear
triangle relations.  We impose these relations by vector-valued Lagrange
multipliers and show that the resulting pair-space equations are
equivalent to the classical central-configuration equations.  Since the
realizability subspace is linear, its curvature vanishes; consequently,
the constrained Hessian is obtained simply by restricting the ambient
pair-space Hessian to this subspace.  This eliminates the second-derivative
terms associated with the Cayley--Menger constraint.

The pair-space formulation also reveals a useful spectral structure.
Each pair contribution admits a decomposition into a positive semidefinite
term, determined by the gap between its radial and transverse eigenvalues,
and a signed term determined by its transverse eigenvalue alone. 
When these edge contributions are assembled, the positive semidefinite
terms combine into a single matrix: a graph Laplacian whose entries are
themselves $2\times2$ blocks rather than scalars, one block for each pair
of bodies, so that each edge weight is a $2\times2$ positive semidefinite
matrix rather than a number. Such an object is called a matrix-weighted
graph Laplacian \cite{Trinh2018,Zhang2020}. Grounding body $1$ then yields
a reduced version of this matrix, which we call the \textbf{gap Laplacian}
$L^\Delta$. The signed terms define the \textbf{transverse Laplacian}
$\widetilde L$, which has the form
\[
\widetilde L=\ell\otimes\mathbf I_2,
\]
where $\ell$ is the \textbf{reduced transverse Laplacian}, an ordinary
scalar-weighted graph Laplacian, and $\otimes$ denotes the Kronecker
product. Thus,
\[
H_{\mathcal C}=L^\Delta+\widetilde L,
\]
and all possible negative directions of $H_{\mathcal C}$ are contained in
$\widetilde L$.
For a non-collinear planar four-body central
configuration, \(\ell\) has rank one.  Consequently, \(\widetilde L\) has
rank two, which gives a direct proof that the Morse index of
\(H_{\mathcal C}\) is at most two.

After regularizing \(L^\Delta\) by removing its one-dimensional kernel,
corresponding to the rotational zero mode, the resulting positive definite
symmetric form \( L^\Delta_{\mathrm{reg}}\) and the form \(\widetilde L\) are simultaneously
diagonalizable by congruence.

Since $\widetilde{L}$ has rank two, the definiteness and degeneracy of the Hessian depend entirely on its restriction to two effective directions. We present two equivalent criteria to evaluate these properties. The first criterion determines definiteness and degeneracy through the eigenvalues of an explicit $2\times2$ symmetric matrix $\Omega$. 
This requires inverting the $6\times6$ matrix $L^\Delta_{\mathrm{reg}}$. The second, inverse-free criterion works directly with the generalized eigenvalues of the pencil $(\widetilde{L}, L^\Delta_{\mathrm{reg}})$, computed from the characteristic polynomial $\det(\lambda L^\Delta_{\mathrm{reg}} - \widetilde{L})$.
Because
\(\widetilde L\) is a $ 6 \times 6 $ matrix of  rank two, four of the six generalized eigenvalues vanish
identically, and the polynomial factors as
\(\lambda^4\bigl(c_2\lambda^2+c_1\lambda+c_0\bigr)\); the two roots of the
quadratic factor are precisely the two nonzero generalized eigenvalues, and
they coincide with the eigenvalues of \(\Omega\).
In either formulation, the Hessian is positive definite on the complement
of the rotational zero mode when both eigenvalues exceed
\(-1\), while degeneracy occurs when at least one of them equals \(-1\).

The second, inverse-free criterion is
considerably more efficient in practice, since it requires only a single
\(6\times6\) determinant rather than a symbolic matrix inversion. We
illustrate the second criterion with an application to the equal-mass square
central configuration.

The Hessian and the gap Laplacian are invariant under every reflection
symmetry $R$ of the configuration. The reflection induces a linear map on
the space in which the Hessian is defined, and this map satisfies $R^2=\mathbf I$. Its $+1$- and $-1$-eigenspaces consist, respectively, of vectors
left unchanged by $R$ and vectors whose sign is reversed by $R$.
Consequently, the associated quadratic forms split according to these two
eigenspaces, and the generalized eigenvalue problem reduces to two
independent, smaller problems, one on each subspace. We refer to this
reduction as the parity decomposition induced by $R$.

For four bodies, two cases arise, depending on how the axis of symmetry
meets the configuration: either it passes through two bodies, yielding a
kite configuration, or it passes through none, yielding an isosceles
trapezoidal configuration. The parity decomposition takes a slightly
different form in these two cases.

As an application, we prove that, for every rhombus central configuration,
the Hessian is positive definite after the rotational direction has been
removed. Thus every rhombus has Morse index zero and is nondegenerate
modulo rotations. This nondegeneracy was recently established by different
means in \cite{sun2025degeneracy}.

The paper is organized as follows.
Section~\ref{sec:central-configurations} recalls the classical
central-configuration equations and fixes notation. Section~\ref{sec:pairspace}
introduces the pair-space formulation of central configurations, proves
its equivalence with the classical equations, and derives the
corresponding matrix equations for central configurations.
Section~\ref{sec:hessian} constructs the constrained pair-space Hessian,
derives its reduced form, and decomposes it into the gap Laplacian and the
transverse Laplacian.
 In Section~\ref{sec:four-body-problem},
this construction is specialized to the planar four-body problem: we
establish the rank-one property of the reduced transverse-Laplacian, show
that the index of the reduced Hessian is at most two, and derive the
resulting positivity criteria. Section~\ref{sec:reflection_symmetry}
analyzes the role of reflection symmetries, showing how symmetry
considerations reduce the Hessian to block-diagonal form and simplify
the resulting index computations. 
Section~\ref{sec:rhombus} applies the general theory to the rhombus family,
showing that the reduced Hessian has Morse index zero and is nondegenerate
modulo the symmetries of the problem.  Consequently, rhombus central
configurations are nondegenerate local minima of the constrained
potential on the reduced configuration space.
\section{Central Configurations}
\label{sec:central-configurations}
Consider the planar $N$-body problem with masses $m_1, m_2, \dots, m_N > 0$ and total mass $M = \sum_{i=1}^N m_i$. Let  \(\mathbf r_i=(x_i,y_i)^T\in\mathbb R^2,\)
denote the position of the $i$-th body, and let the global position vector be
$\mathbf r=(\mathbf r_1^T,\mathbf r_2^T,\ldots,\mathbf r_N^T)^T
\in \mathbb R^{2N}$. 
We assume the center of mass is fixed at the origin, so $\sum_{i=1}^N m_i \mathbf{r}_i = 0$.

The classical Newtonian potential $U$ and the moment of inertia $I$ are functions of the position vector $\mathbf{r}$:
\begin{equation}
    U(\mathbf{r}) = \sum_{1 \le i < j \le N} \frac{m_i m_j}{|\mathbf{r}_i - \mathbf{r}_j|}, 
    \qquad
    I(\mathbf{r}) = \sum_{i=1}^N m_i |\mathbf{r}_i|^2.
\end{equation}

A central configuration is a collision-free configuration for which the
gravitational acceleration of each body is directed toward the center of
mass and is proportional to its distance from it.  Thus, there exists a
constant \(\omega^2>0\) such that
\begin{equation}\label{eq:classical_cc}
    \omega^2 m_i \mathbf{r}_i
    =
    \sum_{j\neq i}
    \frac{m_i m_j}{|\mathbf{r}_i-\mathbf{r}_j|^3}
    (\mathbf{r}_i-\mathbf{r}_j),
    \qquad i=1,\dots,N.
\end{equation}
The constant \(\omega\) is the angular velocity of the relative
equilibrium obtained by rigidly rotating the configuration.  By Euler's
theorem for homogeneous functions,
\[
    \omega^2=\frac{U}{I}.
\]
\section{Pair Space Formulation}
\label{sec:pairspace}

We reformulate the central-configuration equations in terms of the
relative displacement vectors between pairs of bodies.  In these variables,
the Newtonian potential is a sum of functions of individual pair vectors.
The price for this simplification is that the pair vectors cannot be chosen
independently: in order to represent an actual configuration of \(N\)
points in the plane, they must satisfy compatibility relations around
triangles.

\subsection{The Ambient Pair Space and the Augmented Functional}
For each unordered pair \(1\leq i<j\leq N\), let
\[
    \mathbf q_{ij}=\mathbf r_i-\mathbf r_j\in\mathbb R^2,
    \qquad
    q_{ij}=|\mathbf q_{ij}|.
\]
When ordered indices are useful, we adopt the convention
\[
    \mathbf q_{ji}=-\mathbf q_{ij},
    \qquad
    \mathbf q_{ii}=0.
\]
The {\bf ambient pair space} is
\[
    \mathcal P
    :=
    \prod_{1\leq i<j\leq N}\mathbb R^2
    \cong
    (\mathbb R^2)^{\binom N2}
    \cong
    \mathbb R^{N(N-1)}.
\]
Thus, an element of \(\mathcal P\) assigns a planar vector to each edge of
the complete graph on \(N\) vertices.  We order the edges
lexicographically,
\(
    (1,2),(1,3),\ldots,(1,N),(2,3),\ldots,(N-1,N),
\)
and identify such an assignment with the column vector
\[
    \mathbf q
    =
    \bigl(
        \mathbf q_{12}^{T},
        \mathbf q_{13}^{T},
        \ldots,
        \mathbf q_{1N}^{T},
        \mathbf q_{23}^{T},
        \ldots,
        \mathbf q_{N-1,N}^{T}
    \bigr)^{T}
    \in\mathcal P.
\]

The ambient space allows these pair vectors to vary independently. In
general, however, an element of $\mathcal P$ does not arise from a
configuration of $N$ points in the plane. The set of all pair vectors that
arise from an actual configuration forms a linear subspace
\[\mathcal C\subset\mathcal P,\] which we call the {\bf realizable pair
subspace}. Because $\mathcal C$ is linear, its tangent space at every
$\bq\in\mathcal C$ is naturally identified with $\mathcal C$ itself.
Thus an infinitesimal realizable perturbation of $\bq$ is represented by
a vector $\bu\in\mathcal C$, so that a nearby pair-vector configuration
has the form
\[
    \bq+\varepsilon\bu,
    \qquad \varepsilon\ \text{small}.
\]
If the perturbation is induced by infinitesimal body displacements
$\delta\br_1,\ldots,\delta\br_N$, then
\[
    \bu_{ij}=\delta\br_i-\delta\br_j.
\]

Pair vectors are unchanged by a common translation of all bodies. Fixing
body $1$ as a reference point, the $N-1$ vectors $\bq_{1k}$,
$k=2,\dots,N$, determine all relative positions. Hence
\[
    \dim\mathcal C=2(N-1).
\]

To obtain a non-redundant description of \(\mathcal C\), we partition the
pairs into two sets:
\begin{enumerate}
    \item \textbf{Independent pairs:} the \(N-1\) pairs incident to body
    \(1\), namely \(\mathbf q_{1k}\) for \(k=2,\ldots,N\).
    These form a spanning tree (a ``star" graph) that uniquely determines
    the positions of all bodies relative to body 1.

    \item \textbf{Dependent pairs:} the remaining
    $(N-1)(N-2)/2$ pairs not connected to body 1
    ($\mathbf{q}_{ij}$ for $2 \le i < j \le N$).
\end{enumerate}
The dependent pairs are determined by the independent pairs through the
independent triangle relations
\begin{equation}\label{eq:triangle_relations}
    \mathbf q_{1i}+\mathbf q_{ij}-\mathbf q_{1j}=0,
    \qquad
    2\leq i<j\leq N.
\end{equation}

Using the identity
\(\sum_{i=1}^N m_i|\mathbf r_i|^2=\frac1M\sum_{i<j}m_im_jq_{ij}^2\), the
Newtonian potential and moment of inertia, restricted to realizable
configurations, agree with the following functions defined on the entire
ambient pair space:
\begin{align}
    U_\pi(\mathbf q)
    &=\sum_{1\leq i<j\leq N}\frac{M\mu_{ij}}{q_{ij}},
    \label{eq:Upi}\\
    I_\pi(\mathbf q)
    &=\sum_{1\leq i<j\leq N}\mu_{ij}q_{ij}^2,
    \label{eq:Ipi}
\end{align}
where \(\mu_{ij}=m_im_j/M\).  For pair vectors associated with a centered
physical configuration,
\[
    U(\mathbf r)=U_\pi(\mathbf q(\mathbf r)),
    \qquad
    I(\mathbf r)= I_\pi(\mathbf q(\mathbf r)).
\]

We seek central configurations as critical points of the pair-space
functional \(U_\pi+\frac{\omega^2}{2}I_\pi\), restricted to the realizable
subspace \(\mathcal C\).  Rather than eliminate the dependent pair vectors,
we impose the independent relations \eqref{eq:triangle_relations} by
vector-valued Lagrange multipliers
\[
    \boldsymbol\phi_{ij}\in\mathbb R^2,
    \qquad
    2\leq i<j\leq N.
\]
The augmented functional on \(\mathcal P\) is therefore
\begin{equation}
\label{eq:Fambient}
    \mathcal F(\mathbf q,\boldsymbol \phi)
    =
    U_\pi(\mathbf q)
    +\frac{\omega^2}{2}I_\pi(\mathbf q)
    +\sum_{2\leq k<l\leq N}
    \boldsymbol\phi_{kl}\cdot
    \bigl(\mathbf q_{1k}+\mathbf q_{kl}-\mathbf q_{1l}\bigr).
\end{equation}

\subsection{The Pair-Space Equations}

Taking the gradient of the augmented functional \(\mathcal{F}\) with respect
to each pair vector \(\mathbf{q}_{ij}\) in the ambient space and setting it
to zero yields the exact pair-space central configuration equations:
\begin{equation}
\label{eq:pair_CC_force}
    \mu_{ij}\left(\omega^2 - \frac{M}{q_{ij}^3}\right)\mathbf{q}_{ij} + \mathbf{K}_{ij} = 0,
    \qquad \text{for all } 1 \le i < j \le N,
\end{equation}
where
\(
    \mathbf{K}_{ij}
    =
    \nabla_{\mathbf{q}_{ij}}
    \left(
        \sum_{2 \le k < l \le N}
        \boldsymbol\phi_{kl} \cdot (\mathbf{q}_{1k} + \mathbf{q}_{kl} - \mathbf{q}_{1l})
    \right)
\)
is the net constraint force acting on the pair \((i,j)\) generated by the
independent triangle relations.

Because we formulated the augmented functional using only the minimal
constraint basis associated with the multipliers \(\boldsymbol \phi_{kl}\)
(for \(2 \le k < l \le N\)), the constraint forces evaluate to explicit
algebraic sums. Differentiating the constraint sum directly with respect
to the dependent and independent pair vectors yields:
\begin{align}
    \text{Dependent pairs:} \quad & \mathbf{K}_{ij} = \boldsymbol \phi_{ij},
    \qquad \text{for } 2 \le i < j \le N, \label{eq:K_dependent} \\[6pt]
    \text{Independent pairs:} \quad & \mathbf{K}_{1j} = \sum_{k=j+1}^N \boldsymbol \phi_{jk} - \sum_{k=2}^{j-1} \boldsymbol \phi_{kj},
    \qquad \text{for } 2 \le j \le N. \label{eq:K_independent}
\end{align}
(In \eqref{eq:K_independent}, any sum where the lower bound exceeds the
upper bound is empty and evaluates to zero).

Thus, on each dependent pair \((i,j)\), the force associated with the
geometric constraint is simply the corresponding multiplier
\(\boldsymbol \phi_{ij}\).  On an independent pair \((1,j)\), the constraint force is
the signed sum of the multipliers associated with all triangle relations
containing \(\mathbf q_{1j}\).

\subsection{Equivalence to the Classical Equations}

We now show that the constrained pair-space equations are equivalent to the
classical central-configuration equations \eqref{eq:classical_cc} in
position space.

\begin{lemma}\label{lem:alpha_perp_equiv}
A configuration $\mathbf{r}$ satisfies the classical central configuration
equations \eqref{eq:classical_cc} if and only if
\begin{equation}\label{eq:alpha_perp_balance}
\sum_{j\ne i}\alpha_{ij}^\perp\mathbf{q}_{ij}=0
\qquad\text{for every } i=1,\dots,N,
\end{equation}
where 
\begin{equation}\label{eq:alpha_perp_def} 
 \alpha_{ij}^\perp=\mu_{ij}(\omega^2-M/q_{ij}^3) .
\end{equation} 
\end{lemma}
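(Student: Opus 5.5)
The plan is to expand the sum \eqref{eq:alpha_perp_balance} directly and use the centre-of-mass normalization $\sum_j m_j\mathbf r_j=0$ that was imposed in Section~\ref{sec:central-configurations}. Fix $i$ and write $\mathbf q_{ij}=\mathbf r_i-\mathbf r_j$. Then
\[
\sum_{j\ne i}\alpha^\perp_{ij}\mathbf q_{ij}
=\frac{m_i}{M}\,\omega^2\sum_{j\ne i}m_j(\mathbf r_i-\mathbf r_j)
-\frac{m_i}{M}\,M\sum_{j\ne i}\frac{m_j}{q_{ij}^3}(\mathbf r_i-\mathbf r_j).
\]
The second sum, multiplied by $m_i$, is exactly the right-hand side of \eqref{eq:classical_cc}. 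For the first sum, I will use $\sum_{j\ne i}m_j(\mathbf r_i-\mathbf r_j)=\sum_{j}m_j(\mathbf r_i-\mathbf r_j)=M\mathbf r_i-\sum_j m_j\mathbf r_j=M\mathbf r_i$, where the $j=i$ term vanishes and the last equality is the centre-of-mass condition.

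Substituting this gives, for each $i$, the identity
\[
\sum_{j\ne i}\alpha^\perp_{ij}\mathbf q_{ij}
=\omega^2 m_i\mathbf r_i-\sum_{j\ne i}\frac{m_im_j}{q_{ij}^3}(\mathbf r_i-\mathbf r_j).
\]
The left side therefore vanishes precisely when the $i$-th classical equation \eqref{eq:classical_cc} holds. Taking this over all $i$ gives both implications at once. The identity is an algebraic equality with no division beyond that by $q_{ij}$, which is valid because configurations are collision-free.

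The only point needing care, and the main potential obstacle, is the status of $\omega^2$. In \eqref{eq:classical_cc} it is a constant whose existence is asserted, while in \eqref{eq:alpha_perp_def} it appears as a parameter. I will read the lemma as an equivalence for the same value of $\omega^2$: \eqref{eq:classical_cc} holds with a given $\omega^2$ if and only if \eqref{eq:alpha_perp_balance} holds with that same $\omega^2$. Both directions then follow from the identity above. If desired, one can add that in either case $\omega^2=U/I$, by dotting with $\mathbf r_i$, summing over $i$, and applying Euler's theorem as already noted in the paper. I will also remark that the centre-of-mass normalization is essential; without it the first sum would be $M\mathbf r_i-M\mathbf c$, where $\mathbf c$ is the centre of mass, and the equivalence would hold only after translating so that $\mathbf c=0$.
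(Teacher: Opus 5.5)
Your proposal is correct and follows essentially the same route as the paper. Both arguments expand $\alpha_{ij}^\perp$, use the centre-of-mass identity $\sum_{j\ne i}m_j\mathbf q_{ij}=M\mathbf r_i$, and note that the resulting identity gives both implications; your remark that $\omega^2$ is the same constant on both sides makes explicit what the paper leaves implicit.
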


\begin{proof}
Expanding $\alpha_{ij}^\perp=\frac{m_im_j}{M}\bigl(\omega^2-M/q_{ij}^3\bigr)$,
multiplying \eqref{eq:alpha_perp_balance} by $M/m_i$, and using the
center-of-mass identity $\sum_{j\ne i}m_j\mathbf{q}_{ij}=M\mathbf{r}_i$, we
obtain
\[
\omega^2 M\mathbf{r}_i
-
\sum_{j\ne i}\frac{m_jM}{q_{ij}^3}(\mathbf{r}_i-\mathbf{r}_j)=0.
\]
Dividing by $M$ and multiplying by $m_i$ recovers
\eqref{eq:classical_cc} exactly. Each step is reversible, so the converse
implication follows in the same way.
\end{proof}

The preceding lemma rewrites the classical equations in a form adapted to
pair vectors. We now show that the augmented pair-space critical-point
equations are equivalent to the classical central-configuration equations.
\begin{proposition}
\label{prop:equivalence}
A pair-space configuration $\mathbf{q}$ satisfying the triangle relations is a critical point of the augmented functional $\mathcal{F}(\mathbf{q}, \boldsymbol \phi)$ for some multipliers $\boldsymbol \phi_{ij}$ if and only if its underlying position vector $\mathbf{r}$ satisfies the classical central configuration equations \eqref{eq:classical_cc}.
\end{proposition}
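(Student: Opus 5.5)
The plan is to reduce the statement to Lemma~\ref{lem:alpha_perp_equiv}. The critical-point equations of $\mathcal F$ with respect to $\mathbf q$ are \eqref{eq:pair_CC_force}, with the constraint forces given explicitly by \eqref{eq:K_dependent}--\eqref{eq:K_independent}. Criticality with respect to $\boldsymbol\phi$ is just the triangle relations, which hold by assumption. Writing $\alpha_{ij}^\perp$ as in \eqref{eq:alpha_perp_def}, the equations become
\[
\alpha^\perp_{ij}\mathbf q_{ij}+\mathbf K_{ij}=0 .
\]
The idea is that the dependent-pair equations are not really constraints: they simply \emph{define} the multipliers. The independent-pair equations then encode the balance conditions \eqref{eq:alpha_perp_balance}.

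\textbf{Forward direction.} Assume $(\mathbf q,\boldsymbol\phi)$ is critical.
\begin{enumerate}
\item From \eqref{eq:K_dependent}, each multiplier is determined by its dependent pair: $\boldsymbol\phi_{ij}=-\alpha^\perp_{ij}\mathbf q_{ij}$ for $2\le i<j\le N$.
\item Substitute this into \eqref{eq:K_independent}. Using $\mathbf q_{kj}=-\mathbf q_{jk}$ and $\alpha^\perp_{kj}=\alpha^\perp_{jk}$, both sums combine into one:
\[
\mathbf K_{1j}=-\sum_{k\ne 1,j}\alpha^\perp_{jk}\mathbf q_{jk}.
\]
\item The independent equation for the pair $(1,j)$ then reads $\alpha^\perp_{1j}\mathbf q_{1j}-\sum_{k\ne1,j}\alpha^\perp_{jk}\mathbf q_{jk}=0$. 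Since $\mathbf q_{1j}=-\mathbf q_{j1}$, this is exactly $-\sum_{k\ne j}\alpha^\perp_{jk}\mathbf q_{jk}=0$, i.e. \eqref{eq:alpha_perp_balance} for $i=j$, for every $j=2,\dots,N$.
\item The remaining equation, for $i=1$, follows by summing. The double sum $\sum_{i}\sum_{j\ne i}\alpha^\perp_{ij}\mathbf q_{ij}$ vanishes by antisymmetry of $\mathbf q_{ij}$ and symmetry of $\alpha^\perp_{ij}$. Hence the $i=1$ equation is minus the sum of the others, and so it also vanishes.
\end{enumerate}
Lemma~\ref{lem:alpha_perp_equiv} then gives \eqref{eq:classical_cc}.

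\textbf{Converse.} Assume $\mathbf r$ is a central configuration, and define $\boldsymbol\phi_{ij}:=-\alpha^\perp_{ij}\mathbf q_{ij}$ for $2\le i<j\le N$. The dependent-pair equations then hold by construction. By Lemma~\ref{lem:alpha_perp_equiv}, \eqref{eq:alpha_perp_balance} holds for every $i$. Running the computation of step 2 backwards, the balance equation for each $j\ge2$ is exactly the independent-pair equation for $(1,j)$. The $\boldsymbol\phi$-equations hold because $\mathbf q$ comes from $\mathbf r$.

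The main point needing care is the sign and index bookkeeping in step 2. The two sums in \eqref{eq:K_independent}, over $k>j$ and over $2\le k<j$, must combine with opposite orientations into a single sum over all $k\ne 1,j$. This works because each term is odd in $\mathbf q_{jk}$ and even in $\alpha^\perp_{jk}$. The only other thing to note explicitly is that the balance equation at body $1$ is redundant, which is what the antisymmetry argument in step 4 provides.
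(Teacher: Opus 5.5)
Your proof is correct, and your converse is the paper's: set $\boldsymbol\phi_{ij}=-\alpha_{ij}^\perp\mathbf q_{ij}$ on the dependent pairs, then check the independent-pair equations against the balance at each body $i\ge 2$.

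Your forward direction takes a different route. The paper never solves for the multipliers. It first proves that the constraint forces are internal, $\sum_{j\ne i}\mathbf K_{ij}=0$ for every $i$, as an identity that holds for arbitrary $\boldsymbol\phi$. It then sums $\alpha^\perp_{ij}\mathbf q_{ij}=-\mathbf K_{ij}$ over $j\ne i$, which gives all $N$ balance equations at once. You instead use the dependent-pair equations to pin down $\boldsymbol\phi$, and substitute into the independent-pair equations to get the balance at bodies $2,\dots,N$. You then recover the balance at body $1$ from the global antisymmetry $\sum_i\sum_{j\ne i}\alpha^\perp_{ij}\mathbf q_{ij}=0$.

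Your route makes both directions a single computation read two ways. It also shows in passing that at a critical point the multipliers are uniquely determined by $\mathbf q$, which the paper's proof leaves implicit. The paper's route isolates a structural fact: the constraint forces are self-equilibrated at every node, whatever the multipliers. From that fact the balance at the grounded body $1$ follows by the same summation as at the other bodies, so no separate redundancy argument is needed.
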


\begin{proof}
The pair-space equations \eqref{eq:pair_CC_force} read
\begin{equation}\label{eq:alpha_K}
\alpha_{ij}^\perp \mathbf{q}_{ij} = -\mathbf{K}_{ij},
\end{equation}
where we extend the notation skew-symmetrically by
$\mathbf{q}_{ji}=-\mathbf{q}_{ij}$, $\mathbf{K}_{ji}=-\mathbf{K}_{ij}$, and
$\alpha_{ji}^\perp=\alpha_{ij}^\perp$.

\medskip\noindent
\textbf{Claim: the constraint forces are internal, i.e.\ $\sum_{j\ne i}\mathbf{K}_{ij}=0$
for every $i$.} We verify this separately for the grounded body and for an
arbitrary ungrounded body.

For $i=1$, formula \eqref{eq:K_independent} gives
\[
\sum_{j=2}^N \mathbf{K}_{1j}
=
\underbrace{\sum_{j=2}^N\sum_{k=j+1}^N\boldsymbol \phi_{jk}}_{2\le j<k\le N}
-
\underbrace{\sum_{j=2}^N\sum_{k=2}^{j-1}\boldsymbol \phi_{kj}}_{2\le k<j\le N}.
\]
Both double sums range over the same set of index pairs $2\le j<k\le N$
once we relabel $k\leftrightarrow j$ in the second sum; hence the two sums
cancel and $\sum_{j=2}^N \mathbf{K}_{1j}=0$.

For $i\ge2$, separate the term $j=1$ from the rest:
\[
    \sum_{j\ne i}\mathbf{K}_{ij}
    =
    \mathbf{K}_{i1}+\sum_{\substack{j=2\\j\ne i}}^N \mathbf{K}_{ij}.
\]

Since $\mathbf{K}_{i1}=-\mathbf{K}_{1i}$, applying \eqref{eq:K_independent} at the index
$i$ (in place of $j$) gives
\[
    \mathbf{K}_{i1}
    =
    -\sum_{k=i+1}^N\boldsymbol \phi_{ik}+\sum_{k=2}^{i-1}\boldsymbol \phi_{ki}.
\]

For the remaining sum, recall that \eqref{eq:K_dependent} states
$\mathbf{K}_{ab}=\boldsymbol \phi_{ab}$ for every pair $2\le a<b\le N$.  For $j>i$, apply this
with $(a,b)=(i,j)$ to get $\mathbf{K}_{ij}=\boldsymbol \phi_{ij}$ directly.  For $j<i$, apply it
instead with $(a,b)=(j,i)$ to get $\mathbf{K}_{ji}=\boldsymbol \phi_{ji}$, and then invoke
skew-symmetry $\mathbf{K}_{ij}=-\mathbf{K}_{ji}$ to obtain $\mathbf{K}_{ij}=-\boldsymbol \phi_{ji}$.  Hence
\[
    \sum_{\substack{j=2\\j\ne i}}^N \mathbf{K}_{ij}
    =
    \sum_{j=i+1}^N\boldsymbol \phi_{ij}
    -\sum_{j=2}^{i-1}\boldsymbol \phi_{ji}.
\]

Renaming the dummy index $k\to j$ in the expression for $\mathbf{K}_{i1}$ and
adding the two displays,
\[
    \sum_{j\ne i}\mathbf{K}_{ij}
    =
    \left(-\sum_{j=i+1}^N\boldsymbol \phi_{ij}+\sum_{j=2}^{i-1}\boldsymbol \phi_{ji}\right)
    +
    \left(\sum_{j=i+1}^N\boldsymbol \phi_{ij}-\sum_{j=2}^{i-1}\boldsymbol \phi_{ji}\right)
    =0.
\]
This proves the claim for $i\ge2$.

\medskip
Summing \eqref{eq:alpha_K} over $j\ne i$ and applying the claim to the
right-hand side gives
\[
\sum_{j\ne i}\alpha_{ij}^\perp\mathbf{q}_{ij}=0
\qquad\text{for every }i,
\]
which by Lemma~\ref{lem:alpha_perp_equiv} is equivalent to the classical
equations \eqref{eq:classical_cc}. This proves the forward implication.

\medskip
Conversely, suppose \eqref{eq:classical_cc} holds. By
Lemma~\ref{lem:alpha_perp_equiv}, $\sum_{k\ne j}\alpha_{jk}^\perp\mathbf{q}_{jk}=0$
at every node $j$. We must produce multipliers $\boldsymbol \phi_{ij}$ for which
$\alpha_{ij}^\perp\mathbf{q}_{ij}=-\mathbf{K}_{ij}$ holds for every pair. We construct
them by forcing this equation to hold on the dependent pairs: for
$2\le i<j\le N$, define
\begin{equation}\label{eq:phi_construct}
\boldsymbol \phi_{ij}=-\alpha_{ij}^\perp\mathbf{q}_{ij}.
\end{equation}
Since $\mathbf{K}_{ij}=\boldsymbol \phi_{ij}$ for dependent pairs by \eqref{eq:K_dependent}, this
choice trivially gives $\alpha_{ij}^\perp\mathbf{q}_{ij}=-\mathbf{K}_{ij}$ for all
dependent pairs.

It remains to check that the same multipliers reproduce the correct force
$\mathbf{K}_{1i}$ on each independent pair.  Applying \eqref{eq:alpha_perp_balance}
at the node $i\ge2$, and isolating the term $j=1$, gives
\[
    \alpha_{i1}^\perp\mathbf{q}_{i1}
    +\sum_{\substack{j=2\\j\ne i}}^N\alpha_{ij}^\perp\mathbf{q}_{ij}=0.
\]
Using $\mathbf{q}_{i1}=-\mathbf{q}_{1i}$ and
$\alpha_{i1}^\perp=\alpha_{1i}^\perp$ to move the $j=1$ term to the other
side,
\[
    \alpha_{1i}^\perp\mathbf{q}_{1i}
    =\sum_{\substack{j=2\\j\ne i}}^N\alpha_{ij}^\perp\mathbf{q}_{ij}.
\]
Splitting the sum into indices $j<i$ and $j>i$, and applying
skew-symmetry $\alpha_{ij}^\perp\mathbf{q}_{ij}=-\alpha_{ji}^\perp\mathbf{q}_{ji}$
to the first part, this becomes
\[
    \alpha_{1i}^\perp\mathbf{q}_{1i}
    =-\sum_{j=2}^{i-1}\alpha_{ji}^\perp\mathbf{q}_{ji}
    +\sum_{j=i+1}^N\alpha_{ij}^\perp\mathbf{q}_{ij}.
\]
Substituting the definition \eqref{eq:phi_construct}---replacing
$-\alpha_{ji}^\perp\mathbf{q}_{ji}$ by $\boldsymbol \phi_{ji}$ and
$\alpha_{ij}^\perp\mathbf{q}_{ij}$ by $-\boldsymbol \phi_{ij}$---gives
\[
    \alpha_{1i}^\perp\mathbf{q}_{1i}
    =\sum_{j=2}^{i-1}\boldsymbol \phi_{ji}-\sum_{j=i+1}^N\boldsymbol \phi_{ij}.
\]
Multiplying by $-1$ and comparing with \eqref{eq:K_independent}, evaluated
at the index $i$ in place of $j$,
\[
    -\alpha_{1i}^\perp\mathbf{q}_{1i}
    =\sum_{j=i+1}^N\boldsymbol \phi_{ij}-\sum_{j=2}^{i-1}\boldsymbol \phi_{ji}
    =\mathbf{K}_{1i}.
\]
Thus $\alpha_{1i}^\perp\mathbf{q}_{1i}=-\mathbf{K}_{1i}$ holds for independent pairs
as well.  Since the multipliers \eqref{eq:phi_construct} satisfy the
augmented pair-space equations for all pairs simultaneously, the equivalence
is proven.

\end{proof}

\subsection{Laplacian Matrix Formulation of Central Configurations}

Recall that  \(\mathbf r_i=(x_i,y_i)^T\in\mathbb R^2,\)
and define the global coordinate vectors
\[
\mathbf X=(x_1,\ldots,x_N)^T,
\qquad
\mathbf Y=(y_1,\ldots,y_N)^T
\]
in $\mathbb R^N$. We now recast the summation identity
\eqref{eq:alpha_perp_balance} of
Lemma~\ref{lem:alpha_perp_equiv} as a pair of matrix equations for
$\mathbf X$ and $\mathbf Y$.

Define the matrix
$\ell_{\mathrm{full}}\in\mathbb R^{N\times N}$ by
\begin{equation}
\label{eq:Lfull_def}
(\ell_{\mathrm{full}})_{ij}
=
\begin{cases}
-\alpha_{ij}^\perp, & i\neq j,\\
\displaystyle\sum_{k\neq i}\alpha_{ik}^\perp, & i=j,
\end{cases}
\end{equation}
where
\(\alpha_{ij}^\perp=\mu_{ij}\left(\omega^2-\frac{M}{q_{ij}^3}\right).\)
Thus $\ell_{\mathrm{full}}$ is symmetric and has zero row sums. It is
the Laplacian of the complete graph on the bodies with signed edge
weights $\alpha_{ij}^\perp$; see, for example,
\cite[Chapter~18]{gallier2020linear} for the standard graph-Laplacian
construction.
The balance equations may now be encoded as the kernel condition of a
signed weighted graph Laplacian. The following proposition makes this
equivalence precise.

\begin{proposition}[Matrix Formulation]
\label{prop:laplacian_kernel}
A configuration $\mathbf{r}$ is a central configuration if and only if its
global coordinate vectors $\mathbf{X}$ and $\mathbf{Y}$ both lie in the
kernel of $\ell_{\mathrm{full}}$:
\begin{equation}
\label{eq:matrix_CC}
    \ell_{\mathrm{full}} \mathbf{X} = 0 \quad \text{and} \quad
    \ell_{\mathrm{full}} \mathbf{Y} = 0.
\end{equation}
\end{proposition}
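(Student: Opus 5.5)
The plan is to reduce the statement directly to Lemma~\ref{lem:alpha_perp_equiv}. First, compute the $i$-th row of $\ell_{\mathrm{full}}$ applied to an arbitrary vector $\mathbf Z=(z_1,\ldots,z_N)^T\in\mathbb R^N$. Using the definition \eqref{eq:Lfull_def}, we get
\[
(\ell_{\mathrm{full}}\mathbf Z)_i
=\Bigl(\sum_{k\neq i}\alpha_{ik}^\perp\Bigr)z_i-\sum_{j\neq i}\alpha_{ij}^\perp z_j
=\sum_{j\neq i}\alpha_{ij}^\perp(z_i-z_j).
\]
This is the standard Laplacian identity, and it relies only on the zero-row-sum structure.

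Next, apply this identity with $\mathbf Z=\mathbf X$ and with $\mathbf Z=\mathbf Y$. Since $\mathbf q_{ij}=\mathbf r_i-\mathbf r_j=(x_i-x_j,\,y_i-y_j)^T$, the two scalar quantities $(\ell_{\mathrm{full}}\mathbf X)_i$ and $(\ell_{\mathrm{full}}\mathbf Y)_i$ are exactly the $x$- and $y$-components of the vector $\sum_{j\neq i}\alpha_{ij}^\perp\mathbf q_{ij}$. Therefore the pair of conditions \eqref{eq:matrix_CC} holds if and only if the balance equations \eqref{eq:alpha_perp_balance} hold for every $i=1,\dots,N$. Lemma~\ref{lem:alpha_perp_equiv} then says this is equivalent to the classical equations \eqref{eq:classical_cc}, which gives both directions of the statement at once.

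I expect no genuine obstacle here. The one point to state carefully is the role of $\omega^2$, which enters $\alpha_{ij}^\perp$ and hence $\ell_{\mathrm{full}}$. The matrix $\ell_{\mathrm{full}}$ should be understood with the same constant $\omega^2$ that appears in \eqref{eq:classical_cc}, equivalently $\omega^2=U/I$. The configuration is also assumed centered and collision-free, as in Lemma~\ref{lem:alpha_perp_equiv}, since that lemma used the center-of-mass identity. With that convention, the equivalence is a direct rewriting, component by component.
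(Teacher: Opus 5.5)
Your proposal is correct and follows the paper's proof essentially verbatim: the same row identity $(\ell_{\mathrm{full}}\mathbf Z)_i=\sum_{j\neq i}\alpha_{ij}^\perp(z_i-z_j)$, applied to $\mathbf X$ and $\mathbf Y$ to recover the balance equations \eqref{eq:alpha_perp_balance}, followed by Lemma~\ref{lem:alpha_perp_equiv}. Your caveats are well placed. The paper leaves them implicit: $\omega^2$ must be the same constant as in \eqref{eq:classical_cc}, and the configuration must be centered and collision-free.
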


\begin{proof}
By the definition of the signed weighted Laplacian $\ell_{\mathrm{full}}$,
the diagonal entry in its $i$-th row is the sum $\sum_{j\neq i}\alpha_{ij}^\perp$
of the coefficients $\alpha_{ij}^\perp$ incident to body $i$, whereas the
off-diagonal entry in column $j$ is $-\alpha_{ij}^\perp$.  Thus, the $i$-th
component of $\ell_{\mathrm{full}}\mathbf X$ is obtained by multiplying
the diagonal entry by $x_i$ and each off-diagonal entry by the
corresponding coordinate $x_j$.

Since $x_i$ is independent of the summation index $j$, it may be
placed inside the first sum. Combining the resulting sums gives
\[
\begin{aligned}
(\ell_{\mathrm{full}}\mathbf X)_i
&=
\underbrace{\left(\sum_{j\neq i}\alpha_{ij}^\perp\right)x_i}
_{\text{diagonal term}}
-
\underbrace{\sum_{j\neq i}\alpha_{ij}^\perp x_j}
_{\text{off-diagonal terms}} \\
&=
\sum_{j\neq i}\alpha_{ij}^\perp(x_i-x_j).
\end{aligned}
\]
An identical computation for the $y$-coordinates gives
\[
(\ell_{\mathrm{full}}\mathbf Y)_i
=
\sum_{j\neq i}\alpha_{ij}^\perp(y_i-y_j).
\]
Therefore,
\[
\ell_{\mathrm{full}}\mathbf X=0
\qquad\text{and}\qquad
\ell_{\mathrm{full}}\mathbf Y=0
\]
if and only if
\[
\sum_{j\neq i}\alpha_{ij}^\perp
\begin{pmatrix}
x_i-x_j\\
y_i-y_j
\end{pmatrix}
=
\sum_{j\neq i}\alpha_{ij}^\perp
(\mathbf r_i-\mathbf r_j)
=
0,
\qquad i=1,\ldots,N.
\]
By \eqref{eq:alpha_perp_balance} of
Lemma~\ref{lem:alpha_perp_equiv}, these equations are equivalent to the
classical central-configuration equations
\eqref{eq:classical_cc}.
\end{proof}

\begin{remark}[Block-vector formulation]
Since
\(\mathbf r=(x_1,y_1,\ldots,x_N,y_N)^T\in\mathbb R^{2N},\)
then the block Laplacian
\[
L_{\mathrm{full}}
=
\ell_{\mathrm{full}}\otimes \mathbf I_2
\]
satisfies
\[
(L_{\mathrm{full}}\mathbf r)_i
=
\sum_{j\neq i}\alpha_{ij}^{\perp}
(\mathbf r_i-\mathbf r_j),
\qquad i=1,\ldots,N.
\]
Consequently, the central-configuration equations are equivalently
expressed as
\[
L_{\mathrm{full}}\mathbf r=0.
\]
\end{remark}

\begin{remark}[Translational degeneracy and grounding]\label{rmk:transl-degeneracy-and-grounding}
Since every row of $\ell_{\mathrm{full}}$ sums to zero,
\[
\ell_{\mathrm{full}}\mathbf{1}=0,
\qquad
\mathbf{1}=(1,\ldots,1)^T\in\mathbb{R}^N.
\]
Because $\ell_{\mathrm{full}}$ acts identically and independently on
$\mathbf{X}$ and $\mathbf{Y}$, this single scalar kernel relation
accounts for both translational zero-modes: it gives
$\ell_{\mathrm{full}}\mathbf{X}=0$ whenever
$\mathbf{X}=c\,\mathbf{1}$, corresponding to a rigid
$x$-translation, and likewise
$\ell_{\mathrm{full}}\mathbf{Y}=0$ whenever
$\mathbf{Y}=c\,\mathbf{1}$, corresponding to a rigid
$y$-translation.

To remove these translational modes, we ground body $1$ and pass to
the pair vectors
\[
\mathbf{q}_{1j}=\mathbf{r}_1-\mathbf{r}_j,
\qquad j=2,\ldots,N.
\]

At the matrix level, this corresponds to deleting the row and column of
\(\ell_{\mathrm{full}}\) associated with body \(1\). The resulting
grounded matrix \(\ell\) acts on the relative coordinates
\(\mathbf q_{1j}=\mathbf r_1-\mathbf r_j\), \(j=2,\ldots,N\), and is
independent of the absolute position of the configuration. The grounding
is only a coordinate reduction: the configuration remains centered at the
origin, while the matrix \(\ell\) is expressed in the relative
coordinates \(\mathbf q_{1j}\). The matrix \(\ell\) will reappear
throughout the remainder of the paper as the central object governing the
reduced Hessian once the translational degeneracy has been removed.
\end{remark}
\section{The Hessian of the constrained pair-space functional}
\label{sec:hessian}

We now turn to the second variation of the pair-space functional at a planar 
central configuration. In general, computing the Hessian of a constrained 
variational problem requires accounting for the curvature of the constraint 
manifold. Here, however, the realizability subspace \(\mathcal{C}\) is completely 
flat. Because the triangle relations are strictly linear, their second 
derivatives vanish, and they contribute no geometric curvature terms to the 
Hessian.

Consequently, the second variation of the fully constrained problem coincides 
exactly with the ambient Hessian of the free pair-space functional
\begin{equation}\label{eq:Ffreehessiana}
F = U_{\pi}+\frac{\omega^2}{2}I_\pi.
\end{equation}
To evaluate the second variation at a central configuration, it therefore 
suffices to compute the Hessian of \(F\) in the full ambient pair space 
\(\mathcal{P}\), and then restrict the resulting quadratic form to the admissible 
tangent space \(T_{\bq}\mathcal{C}\).

\subsection{Block-diagonal structure}

The first step in evaluating this ambient Hessian is to recognize its natural 
decoupling. Since
\[
F = \sum_{1 \le i < j \le N} F_{ij},
\qquad
F_{ij}=\frac{M\mu_{ij}}{q_{ij}}+\frac{\omega^2}{2}\mu_{ij}q_{ij}^2,
\]
and each $F_{ij}$ depends only on $\bq_{ij}=(x_{ij},y_{ij})$, all mixed
partial derivatives between different pairs vanish:
\[
\frac{\partial^2F}{\partial x_{ij}\partial x_{kl}}
=
\frac{\partial^2F}{\partial x_{ij}\partial y_{kl}}
=
\frac{\partial^2F}{\partial y_{ij}\partial y_{kl}}
=0
\qquad
\text{whenever }\{i,j\}\neq\{k,l\}.
\]
Hence, in the standard lexicographical coordinate ordering of the pairs $(12, 13, \dots, (N-1)N)$, the ambient Hessian is block diagonal, consisting of $K = N(N-1)/2$ diagonal blocks of size $2 \times 2$:
\begin{equation}\label{eq:Hessian_block_diag}
D_{\mathcal P}^2 F
=
\begin{pmatrix}
\mathbf A_{12} & \mathbf 0 & \mathbf 0 & \cdots & \mathbf 0 \\
\mathbf 0 & \mathbf A_{13} & \mathbf 0 & \cdots & \mathbf 0 \\
\mathbf 0 & \mathbf 0 & \mathbf A_{14} & \cdots & \mathbf 0 \\
\vdots & \vdots & \vdots & \ddots & \vdots \\
\mathbf 0 & \mathbf 0 & \mathbf 0 & \cdots & \mathbf A_{(N-1)N}
\end{pmatrix},
\end{equation}
where each $\mathbf A_{ij}$ is a $2\times 2$ symmetric matrix and each $\mathbf 0$ denotes a $2\times 2$ zero block.

\subsection{The explicit \texorpdfstring{$2\times2$}{2x2} blocks}
Having established that the ambient Hessian is composed entirely of diagonal 
blocks, we now determine their precise algebraic form. The next proposition 
gives the explicit form of each diagonal block of the ambient Hessian.

\begin{proposition}\label{prop:Aij}
For each pair $i<j$, the corresponding $2\times2$ block of the ambient Hessian is
\begin{equation}\label{eq:Aij_formula}
\mathbf A_{ij}
=
D^2_{(x_{ij},y_{ij})} F_{ij}
=
\alpha_{ij}^\perp
\begin{pmatrix}
1 & 0\\
0 & 1
\end{pmatrix}
+
\frac{3M\mu_{ij}}{q_{ij}^5}
\begin{pmatrix}
x_{ij}^2 & x_{ij}y_{ij}\\
x_{ij}y_{ij} & y_{ij}^2
\end{pmatrix},
\end{equation}
where $\alpha_{ij}^\perp = \mu_{ij}\left(\omega^2-\frac{M}{q_{ij}^3}\right)$ is the transverse coefficient defined in Eq.~\eqref{eq:alpha_perp_def}.
\end{proposition}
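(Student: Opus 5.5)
The plan is a direct second-derivative computation of the single pair function
\[
F_{ij}(x,y)=\frac{M\mu_{ij}}{\sqrt{x^2+y^2}}+\frac{\omega^2}{2}\mu_{ij}(x^2+y^2),
\]
where we write $x=x_{ij}$, $y=y_{ij}$ and $q=q_{ij}=\sqrt{x^2+y^2}$. By the block-diagonal structure \eqref{eq:Hessian_block_diag}, the block $\mathbf A_{ij}$ is exactly the $2\times2$ Hessian of $F_{ij}$ in these variables. We therefore treat the two summands separately and add the results.

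The quadratic term $\frac{\omega^2}{2}\mu_{ij}q^2$ has Hessian $\omega^2\mu_{ij}\mathbf I_2$ immediately.

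For the Newtonian term, we use $\partial_x q=x/q$ and $\partial_y q=y/q$. First derivatives give $\nabla(1/q)=-\mathbf q_{ij}/q^3$. Differentiating once more,
\[
\partial_a\partial_b(1/q)=-\frac{\delta_{ab}}{q^3}+\frac{3\,a\,b}{q^5},\qquad a,b\in\{x,y\}.
\]
In matrix form this is $-q^{-3}\mathbf I_2+3q^{-5}\,\mathbf q_{ij}\mathbf q_{ij}^T$. Multiplying by $M\mu_{ij}$ gives $-\frac{M\mu_{ij}}{q^3}\mathbf I_2+\frac{3M\mu_{ij}}{q^5}\mathbf q_{ij}\mathbf q_{ij}^T$.

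Adding the two contributions, the identity coefficient is $\mu_{ij}(\omega^2-M/q^3)$. By \eqref{eq:alpha_perp_def} this equals $\alpha_{ij}^\perp$. The remaining term is $\frac{3M\mu_{ij}}{q^5}\begin{pmatrix}x^2&xy\\xy&y^2\end{pmatrix}$, which is exactly \eqref{eq:Aij_formula}.

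There is no real obstacle; the only care needed is the sign and the factor $3$ in the second derivative of $q^{-1}$. As a check, the eigenvalues come out as $\alpha_{ij}^\perp$ in the direction transverse to $\mathbf q_{ij}$ and $\alpha_{ij}^\perp+3M\mu_{ij}/q^3=\mu_{ij}(\omega^2+2M/q^3)$ in the radial direction. This agrees with the one-variable second derivative $\frac{d^2}{dq^2}\bigl(M\mu_{ij}/q+\tfrac{\omega^2}{2}\mu_{ij}q^2\bigr)$, and that agreement justifies the name ``transverse coefficient''.
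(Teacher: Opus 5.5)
Your proposal is correct and follows essentially the same route as the paper. You split $F_{ij}$ into the Newtonian term and the quadratic term, differentiate each twice, and add the results so that the identity coefficient becomes $\alpha_{ij}^\perp$. Your index notation $\partial_a\partial_b(1/q)=-\delta_{ab}/q^3+3ab/q^5$ is just a compact version of the paper's entry-by-entry calculation.
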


\begin{proof}
Since $q_{ij}^2=x_{ij}^2+y_{ij}^2$, we compute the first derivatives:
\[
\frac{\partial}{\partial x_{ij}}
\left(\frac{M\mu_{ij}}{q_{ij}}\right)
=
-\frac{M\mu_{ij}x_{ij}}{q_{ij}^3},
\qquad
\frac{\partial}{\partial y_{ij}}
\left(\frac{M\mu_{ij}}{q_{ij}}\right)
=
-\frac{M\mu_{ij}y_{ij}}{q_{ij}^3}.
\]
Differentiating once more gives the second derivative of the potential with respect to $x_{ij}$:
\[
\frac{\partial^2}{\partial x_{ij}^2}
\left(\frac{M\mu_{ij}}{q_{ij}}\right)
=
-\frac{M\mu_{ij}}{q_{ij}^3}
+\frac{3M\mu_{ij}x_{ij}^2}{q_{ij}^5},
\]
and, by the symmetry $x_{ij}\leftrightarrow y_{ij}$, the corresponding derivative with respect to $y_{ij}$:
\[
\frac{\partial^2}{\partial y_{ij}^2}
\left(\frac{M\mu_{ij}}{q_{ij}}\right)
=
-\frac{M\mu_{ij}}{q_{ij}^3}
+\frac{3M\mu_{ij}y_{ij}^2}{q_{ij}^5}.
\]
The mixed partial derivative is
\[
\frac{\partial^2}{\partial x_{ij}\partial y_{ij}}
\left(\frac{M\mu_{ij}}{q_{ij}}\right)
=
\frac{3M\mu_{ij}x_{ij}y_{ij}}{q_{ij}^5}.
\]

For the quadratic (centrifugal) term, the second derivatives are simply constants:
\[
\frac{\partial^2}{\partial x_{ij}^2}
\left(\frac{\omega^2}{2}\mu_{ij}q_{ij}^2\right)
=
\frac{\partial^2}{\partial y_{ij}^2}
\left(\frac{\omega^2}{2}\mu_{ij}q_{ij}^2\right)
=
\omega^2\mu_{ij},
\]
and the mixed partial is zero:
\[
\frac{\partial^2}{\partial x_{ij}\partial y_{ij}}
\left(\frac{\omega^2}{2}\mu_{ij}q_{ij}^2\right)
=
0.
\]

Adding the two contributions yields the entries of the Hessian block:
\[
\frac{\partial^2 F_{ij}}{\partial x_{ij}^2}
=
\mu_{ij}\left(\omega^2-\frac{M}{q_{ij}^3}\right)
+\frac{3M\mu_{ij}x_{ij}^2}{q_{ij}^5}
=
\alpha_{ij}^\perp+\frac{3M\mu_{ij}x_{ij}^2}{q_{ij}^5},
\]
\[
\frac{\partial^2 F_{ij}}{\partial y_{ij}^2}
=
\alpha_{ij}^\perp+\frac{3M\mu_{ij}y_{ij}^2}{q_{ij}^5},
\qquad
\frac{\partial^2 F_{ij}}{\partial x_{ij}\partial y_{ij}}
=
\frac{3M\mu_{ij}x_{ij}y_{ij}}{q_{ij}^5}.
\]
Separating the $\alpha_{ij}^\perp$ terms from the coordinate-dependent terms immediately yields the matrix decomposition \eqref{eq:Aij_formula}.
\end{proof}

\subsection{Eigenvalues and Spectral Decomposition}
To analyze the definiteness of the ambient Hessian, it is highly convenient 
to diagonalize each block $\mathbf{A}_{ij}$ individually. This local spectral 
decomposition naturally separates the variations along the edge from those 
orthogonal to it. For each pair $i<j$, let
\[
\hat{\bq}_{ij}=\frac{(x_{ij},y_{ij})}{q_{ij}},
\qquad
\hat{\bq}_{ij}^{\perp}=\frac{(-y_{ij},x_{ij})}{q_{ij}}.
\]
These are, respectively, the unit radial and transverse directions associated
with the pair vector $\mathbf{q}_{ij}$.

In the orthonormal basis $\{\hat{\bq}_{ij},\hat{\bq}_{ij}^{\perp}\}$, the matrix $\mathbf{A}_{ij}$ is diagonal, with eigenvalues $\alpha_{ij}^\parallel$ and $\alpha_{ij}^\perp$ associated respectively to the eigenvectors $\hat{\bq}_{ij}$ and $\hat{\bq}_{ij}^\perp$. More precisely, writing the rank-one projectors explicitly as $\hat{\bq}_{ij}\hat{\bq}_{ij}^{T}$ and $\hat{\bq}_{ij}^{\perp}(\hat{\bq}_{ij}^{\perp})^{T}$,
\begin{equation}\label{eq:Aij_diag}
\mathbf{A}_{ij}
=
\alpha_{ij}^{\parallel}\,
\hat{\bq}_{ij}\hat{\bq}_{ij}^{T}
+
\alpha_{ij}^{\perp}\,
\hat{\bq}_{ij}^{\perp}(\hat{\bq}_{ij}^{\perp})^{T},
\end{equation}
where $\alpha_{ij}^{\perp}=\mu_{ij}\left(\omega^2-\frac{M}{q_{ij}^3}\right)$ is the transverse coefficient defined in Eq.~\eqref{eq:alpha_perp_def}, and the radial coefficient $\alpha_{ij}^{\parallel}$ is given by:
\begin{equation}\label{eq:alpha_rad}
\alpha_{ij}^{\parallel}
=
\alpha_{ij}^{\perp} + \frac{3M\mu_{ij}}{q_{ij}^3}
=
\mu_{ij}\left(\omega^2+\frac{2M}{q_{ij}^3}\right).
\end{equation}
Thus, $\alpha_{ij}^{\parallel}$ is always strictly positive, whereas the sign of $\alpha_{ij}^{\perp}$ depends on the geometry of the configuration.

Expanding the projectors $\hat{\bq}_{ij}\hat{\bq}_{ij}^{T}$ and $\hat{\bq}_{ij}^{\perp}(\hat{\bq}_{ij}^{\perp})^{T}$ into Cartesian coordinates, $\mathbf{A}_{ij}$ admits the explicit matrix expression:
\begin{equation}\label{eq:Aij_diag_explicit}
\mathbf{A}_{ij}
=
\frac{\alpha_{ij}^{\parallel}}{q_{ij}^2}
\begin{pmatrix}
x_{ij}^2 & x_{ij}y_{ij}\\
x_{ij}y_{ij} & y_{ij}^2
\end{pmatrix}
+
\frac{\alpha_{ij}^{\perp}}{q_{ij}^2}
\begin{pmatrix}
y_{ij}^2 & -x_{ij}y_{ij}\\
-x_{ij}y_{ij} & x_{ij}^2
\end{pmatrix}.
\end{equation}

Because the sum of these two coordinate matrices (scaled by $1/q_{ij}^2$) is exactly the $2 \times 2$ identity matrix, substituting $\alpha_{ij}^{\parallel} = \alpha_{ij}^{\perp} + 3M\mu_{ij}/q_{ij}^3$ into \eqref{eq:Aij_diag_explicit} exactly recovers the previous Cartesian form \eqref{eq:Aij_formula}.

\begin{remark}[Sign pattern for planar four-body configurations]
Suppose $N=4$, and set $\sigma=\omega^2/M$, $s_{ij}=q_{ij}^{-3}$. From
\eqref{eq:alpha_perp_def},
\[
    \alpha_{ij}^{\perp}
    =\mu_{ij}\Bigl(\omega^2-\tfrac{M}{q_{ij}^3}\Bigr)
    =m_i m_j(\sigma-s_{ij}),
\]
so $\alpha_{ij}^{\perp}>0$ iff $q_{ij}>\sigma^{-1/3}$. By
\eqref{eq:Aij_diag}, each block $\mathbf A_{ij}$ has eigenvalues
$\alpha_{ij}^{\parallel}>0$ and $\alpha_{ij}^{\perp}$, so the signature
of $D_{\mathcal P}^2F$ is $(6+p,\,6-p)$, where $p$ is the number of
edges with $\alpha_{ij}^\perp>0$.

\medskip\noindent\textbf{Convex case.}
Label the bodies $1,2,3,4$ cyclically on the convex hull, so the
diagonals are $\{1,3\},\{2,4\}$ and the remaining four edges are the
boundary. By Proposition~2.5 of Long--Sun \cite{long2002four}, $\sigma$
lies strictly between the diagonal and boundary values of $s_{ij}$,
i.e.\ each diagonal is strictly longer than every boundary edge. Hence
$\alpha_{ij}^\perp>0$ exactly on the two diagonals ($p=2$), giving
signature $(8,4)$.

\medskip\noindent\textbf{Concave case.}
Suppose body $4$ lies inside the triangle formed by bodies $1,2,3$, so
the inner edges are $\{1,4\},\{2,4\},\{3,4\}$ and the outer edges are
$\{1,2\},\{2,3\},\{1,3\}$. By Proposition~2.6 of Long--Sun
\cite{long2002four}, each inner edge is strictly shorter than every
outer edge, so $\sigma$ lies strictly between the corresponding
$s_{ij}$ values. Hence $\alpha_{ij}^\perp>0$ exactly on the three outer
edges ($p=3$), giving signature $(9,3)$.
\end{remark}

\subsection{Derivation of the Transformation Matrix \texorpdfstring{$B$}{B} and the Restricted Hessian}

The $N(N-1)/2$ pair vectors are coupled by the linear triangle constraints. 
These constraints dictate that the realizability subspace $\mathcal{C}$ has 
dimension $2(N-1)$. A natural choice for the independent variables is the set of 
$N-1$ pairs sharing body 1: $\mathbf{q}_{12}, \mathbf{q}_{13}, \dots, \mathbf{q}_{1N}$. 

Rearranging the triangle relations \eqref{eq:triangle_relations} expresses
the remaining dependent pairs purely as linear combinations of the
independent ones:
\begin{equation}
\label{eq:dep_N}
\mathbf{q}_{ij} = -\mathbf{q}_{1i} + \mathbf{q}_{1j}, \qquad \text{for } 2 \le i < j \le N.
\end{equation}

Let the full vector of pair coordinates in $\mathcal P$ be ordered such that the 
independent pairs come first, followed by the dependent pairs grouped sequentially 
by their first index ($i = 2, 3, \dots, N-1$), with the range of the second index 
made explicit in each block:
\begin{equation}
\label{eq:x_ordering}
    \mathbf{x} = \Big( (\mathbf{q}_{1j})_{j=2}^{N} \;\Big|\; (\mathbf{q}_{2j})_{j=3}^{N} \;\Big|\; (\mathbf{q}_{3j})_{j=4}^{N} \;\Big|\; \dots \;\Big|\; \mathbf{q}_{(N-1)N} \Big)^T.
\end{equation}
Let $\mathbf{u} = (\mathbf{q}_{12}, \mathbf{q}_{13}, \dots, \mathbf{q}_{1N})^T$ be the 
state vector of our chosen independent variables in $\mathbb{R}^{2(N-1)}$. 
The linear mapping $\mathbf{x}=B\mathbf{u}$ is defined by an
$N(N-1)\times 2(N-1)$ transformation matrix $B$, which decomposes into
horizontal block rows corresponding to these pair groups.

Let $\mathbf{I}_{2k}$ denote the $2k \times 2k$ identity matrix, and write $\mathbf{I}_2$ 
for the single $2\times2$ identity block. Let $\mathbf{1}_k \in \mathbb{R}^k$ denote the 
all-ones column vector, so that $\mathbf{1}_k \otimes \mathbf{I}_2$ is the $2k \times 2$ 
block-column matrix formed by stacking $k$ copies of $\mathbf{I}_2$. Let 
$\mathbf{0}_{2k \times 2m}$ denote a zero matrix of the specified scalar dimensions. 
With this notation, the hierarchical structure of $B$ is given by:
\[
B =
\begin{pmatrix}
\mathbf{I}_{2(N-1)} \\[4pt]
\hline \\[-10pt]
-(\mathbf{1}_{N-2} \otimes \mathbf{I}_2) & \mathbf{I}_{2(N-2)} \\[4pt]
\hline \\[-10pt]
\mathbf{0}_{2(N-3) \times 2} & -(\mathbf{1}_{N-3} \otimes \mathbf{I}_2) & \mathbf{I}_{2(N-3)} \\[4pt]
\hline \\[-10pt]
\mathbf{0}_{2(N-4) \times 4} & -(\mathbf{1}_{N-4} \otimes \mathbf{I}_2) & \mathbf{I}_{2(N-4)} \\[4pt]
\hline \\[-10pt]
\vdots & \vdots & \ddots \\[4pt]
\hline \\[-10pt]
\mathbf{0}_{2 \times 2(N-3)} & -\mathbf{I}_2 & \mathbf{I}_2
\end{pmatrix}.
\]
The block row governing the $N-i$ pairs starting with body $i$ (for $i = 2, \dots, N-1$) 
is the $(i-1)$-th block row below the top partition. In that row, the $-\mathbf{I}_2$ 
block sits at block-column $(i-1)$, effectively subtracting $\mathbf{q}_{1i}$ from the 
remaining independent pairs, exactly as prescribed by Eq.~\eqref{eq:dep_N}.

The intrinsic Hessian $H_{\mathcal{C}}$ on the subspace $\mathcal{C}$ is 
obtained via the congruence transformation $H_{\mathcal{C}} = B^T (D_{\mathcal{P}}^2 F) B$. 
Because $D_{\mathcal{P}}^2 F$ is composed entirely of $2 \times 2$ diagonal blocks 
$\mathbf{A}_{ij}$ (defined for $i<j$; we extend the definition by symmetry, 
$\mathbf{A}_{ji} := \mathbf{A}_{ij}$, so that the sums below are unambiguous), the 
block multiplication $B^T(D_{\mathcal P}^2F)B$ aggregates these terms in a simple 
pattern: each diagonal block of $H_{\mathcal C}$ collects the contributions of every 
pair incident to the corresponding body, while each off-diagonal block picks up exactly 
the one pair directly joining the two bodies, with a sign flip inherited from the 
$-\mathbf{I}_2$ entries of $B$.
The restricted Hessian $H_{\mathcal C}$ is the $2(N-1)\times 2(N-1)$ matrix,
logically partitioned into an $(N-1)\times(N-1)$ grid of $2\times 2$ blocks

\begin{equation}\label{eq:HC_entrywise}
(H_{\mathcal C})_{a-1,a-1}
=
\sum_{k\neq a}\mathbf{A}_{ak},
\qquad
(H_{\mathcal C})_{a-1,b-1}
=
-\mathbf{A}_{ab}
\quad (a\neq b),
\end{equation}

for $a,b \in \{2,\dots,N\}$, where each $\mathbf{A}_{ij}$ is the $2\times 2$
Hessian block defined in Eq.~\eqref{eq:Aij_formula}, extended by symmetry via
$\mathbf{A}_{ji}:=\mathbf{A}_{ij}$. Written out explicitly, this reads:
\[
H_{\mathcal{C}} =
\begin{pmatrix}
\sum\limits_{k \neq 2} \mathbf{A}_{2k} & -\mathbf{A}_{23} & -\mathbf{A}_{24} & \cdots & -\mathbf{A}_{2N} \\[10pt]
-\mathbf{A}_{23} & \sum\limits_{k \neq 3} \mathbf{A}_{3k} & -\mathbf{A}_{34} & \cdots & -\mathbf{A}_{3N} \\[10pt]
-\mathbf{A}_{24} & -\mathbf{A}_{34} & \sum\limits_{k \neq 4} \mathbf{A}_{4k} & \cdots & -\mathbf{A}_{4N} \\
\vdots & \vdots & \vdots & \ddots & \vdots \\
-\mathbf{A}_{2N} & -\mathbf{A}_{3N} & -\mathbf{A}_{4N} & \cdots & \sum\limits_{k \neq N} \mathbf{A}_{Nk}
\end{pmatrix}.
\]

This explicit block matrix represents the second variation of the free 
functional, evaluated strictly on the admissible configurations defined by 
the triangle constraints.

\begin{remark}[Connection to Graph Laplacians]
The explicit matrix $H_{\mathcal{C}}$ takes the exact form of a reduced, 
matrix-weighted graph Laplacian. Consider the $N$ bodies as vertices of a complete 
graph $K_N$, where each edge $\{i,j\}$ is assigned the $2\times 2$ matrix weight 
$\mathbf{A}_{ij} = \mathbf{A}_{ji}$. The full block-Laplacian $\mathbf{L}$ of this 
graph has diagonal blocks $\mathbf{L}_{ii} = \sum_{k \neq i} \mathbf{A}_{ik}$ and 
off-diagonal blocks $\mathbf{L}_{ij} = -\mathbf{A}_{ij}$.

Choosing $\mathbf{q}_{12}, \dots, \mathbf{q}_{1N}$ as the independent variables is 
physically equivalent to placing body 1 at the origin and measuring the 
positions of bodies $2, \dots, N$ relative to it. Mathematically, this is 
achieved by deleting the row and column blocks associated with node 1 from 
the full Laplacian $\mathbf{L}$. The resulting principal submatrix is 
precisely the restricted Hessian $H_{\mathcal{C}}$. 

Note that this identification is exact as a matrix identity, but it does not carry
over the usual positivity guarantee for grounded scalar Laplacians: for
a connected graph with nonnegative edge weights, the grounded Laplacian
is positive definite. Here, however, the matrix weights $\mathbf{A}_{ij}$
are generic Hessian blocks of the potential and need not even  be
positive semidefinite, so that the reduced Hessian need not be positive definite. 
\end{remark}

\subsection{Splitting the Restricted Hessian}
\label{subsec:hessian_splitting}

Recall from Equation~\eqref{eq:alpha_rad} that each $2\times 2$ block
admits the spectral decomposition
\[
\mathbf{A}_{ij}
= \alpha_{ij}^{\parallel}\,\hat{\bq}_{ij}\hat{\bq}_{ij}^{T}
+ \alpha_{ij}^{\perp}\,\hat{\bq}_{ij}^{\perp}(\hat{\bq}_{ij}^{\perp})^{T}.
\]
Define the {\bf radial gaps}
\begin{equation}\label{eq:delta_def}
\delta_{ij} := \alpha_{ij}^{\parallel}-\alpha_{ij}^{\perp}
=
\frac{3M\mu_{ij}}{q_{ij}^3} > 0,
\end{equation}
which is always strictly positive since $\alpha_{ij}^\parallel>0$
dominates $\alpha_{ij}^\perp$ by exactly this amount.

Substituting $\alpha_{ij}^{\parallel} = \alpha_{ij}^{\perp} + \delta_{ij}$ into the decomposition of $\mathbf{A}_{ij}$ and using the identity $\hat{\bq}_{ij}\hat{\bq}_{ij}^{T} + \hat{\bq}_{ij}^{\perp}(\hat{\bq}_{ij}^{\perp})^{T} = \mathbf{I}_2$, 
the edge matrix simplifies significantly to:
\begin{equation}\label{eq:Aij_gap_split}
\mathbf{A}_{ij}
=
\delta_{ij}\,\hat{\bq}_{ij}\hat{\bq}_{ij}^{T}
+
\alpha_{ij}^{\perp} \mathbf{I}_2.
\end{equation}

Since the restricted Hessian $H_{\mathcal{C}}$ is formed by linear combinations of the $\mathbf{A}_{ij}$ blocks, this local decomposition naturally induces a global splitting:
\begin{equation}\label{eq:HC_split_final}
H_{\mathcal C}
=
L^\Delta + \widetilde L,
\end{equation}
where $L^\Delta$ will be called the {\bf gap Laplacian}, since it is built from
the radial gaps $\delta_{ij}$, and $\widetilde L$ the {\bf transverse Laplacian},
since it is built from the transverse eigenvalues $\alpha_{ij}^{\perp}$.

More precisely, $\widetilde L$ is the reduced matrix-weighted Laplacian with edge
weights $\alpha_{ij}^{\perp}\mathbf{I}_2$. Because each weight is a scalar multiple
of the identity, the two spatial coordinate directions decouple completely, and
$\widetilde L$ factors as the Kronecker product
\begin{equation}\label{eq:Ltilde_tensor_final}
\widetilde L=\ell\otimes \mathbf{I}_2,
\end{equation}
where $\ell \in \mathbb{R}^{(N-1)\times (N-1)}$ is the {\bf reduced transverse
Laplacian}.

Recall that $\ell_{\mathrm{full}}\in\mathbb R^{N\times N}$ is the full
scalar transverse Laplacian, with edge weights $\alpha_{ij}^{\perp}$,
and that its associated block Laplacian is
\[
L_{\mathrm{full}}
=
\ell_{\mathrm{full}}\otimes\mathbf I_2.
\]
Grounding body $1$ amounts to deleting the row and column of
$\ell_{\mathrm{full}}$ corresponding to body $1$, producing exactly the
matrix $\ell$ introduced --- without being  named --- in 
Remark \ref{rmk:transl-degeneracy-and-grounding}. We now refer to it as the
\textbf{reduced transverse Laplacian}:
\[
\ell
=
(\ell_{\mathrm{full}})_{\{2,\ldots,N\},\{2,\ldots,N\}}.
\]

Accordingly, $\widetilde L$ is the principal block submatrix of
$L_{\mathrm{full}}$ obtained by deleting the $2\times2$ block row and
block column corresponding to body $1$:
\[
\widetilde L
=
(L_{\mathrm{full}})_{\{2,\ldots,N\},\{2,\ldots,N\}}
=
\ell\otimes\mathbf I_2.
\]
The entries of $ \ell $ indexed by bodies $a,b\in\{2,\dots,N\}$, are given by

\begin{equation}\label{eq:ell_entrywise}
\ell_{a-1,a-1} = \sum_{k\neq a}\alpha_{ak}^\perp,
\qquad
\ell_{a-1,b-1} = -\alpha_{ab}^\perp
\quad (a\neq b).
\end{equation}

This isolates the potentially indefinite part of the restricted Hessian into the much simpler reduced transverse  Laplacian $\ell$.

The gap Laplacian $L^\Delta$ is the reduced Laplacian of the complete
graph $K_N$ with matrix weights
\[
    \mathbf{D}_{ij} = \delta_{ij}\,\hat{\bq}_{ij}\hat{\bq}_{ij}^{T},
\]
built from the radial gaps $\delta_{ij}$. Fixing body 1 as the grounded
vertex, $L^\Delta$ is the $(N-1)\times(N-1)$ block matrix (with blocks
of size $2\times 2$, indexed by bodies $a,b\in\{2,\dots,N\}$) whose
entries are given by
\begin{equation}\label{eq:LDelta_entrywise}
(L^\Delta)_{a-1,a-1} = \sum_{k\neq a}\mathbf{D}_{ak},
\qquad
(L^\Delta)_{a-1,b-1} = -\mathbf{D}_{ab}
\quad (a\neq b).
\end{equation}

In contrast to the potentially indefinite $\widetilde{L}$, the geometric matrix $L^\Delta$ is inherently positive semidefinite, as established in the following proposition.

\begin{proposition}\label{prop:LDelta_psd}
The gap Laplacian $L^\Delta$ is positive semidefinite. More precisely, for any
$\mathbf{v} = (\mathbf{v}_2, \dots, \mathbf{v}_N) \in \mathbb{R}^{2(N-1)}$, its
quadratic form decomposes as
\begin{equation}\label{eq:LDelta_qf_final}
\mathbf{v}^T L^\Delta \mathbf{v}
=
\sum_{a=2}^{N} \delta_{1a}\bigl(\mathbf{v}_a\cdot\hat{\bq}_{1a}\bigr)^2
+
\sum_{2\le a<b\le N} \delta_{ab}\bigl((\mathbf{v}_a-\mathbf{v}_b)\cdot\hat{\bq}_{ab}\bigr)^2
\ge 0.
\end{equation}
In particular, $L^\Delta \succeq 0$.
\end{proposition}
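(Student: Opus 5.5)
The plan is to compute the quadratic form directly from the entrywise description \eqref{eq:LDelta_entrywise}, regrouping it edge by edge. It is convenient to embed $\mathbf v$ into the full (ungrounded) vertex space by setting $\mathbf v_1:=0$. This is exactly what grounding body~1 means: $L^\Delta$ is the principal submatrix of the full matrix-weighted Laplacian $\mathbf L^\Delta$ with weights $\mathbf D_{ij}$, so $\mathbf v^TL^\Delta\mathbf v=\tilde{\mathbf v}^T\mathbf L^\Delta\tilde{\mathbf v}$ with $\tilde{\mathbf v}=(0,\mathbf v_2,\dots,\mathbf v_N)$.

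For the full Laplacian I will verify the standard identity
\[
\tilde{\mathbf v}^T\mathbf L^\Delta\tilde{\mathbf v}
=\sum_{1\le i<j\le N}(\mathbf v_i-\mathbf v_j)^T\mathbf D_{ij}(\mathbf v_i-\mathbf v_j).
\]
To do this, expand the right-hand side. Each edge $\{i,j\}$ contributes $\mathbf v_i^T\mathbf D_{ij}\mathbf v_i+\mathbf v_j^T\mathbf D_{ij}\mathbf v_j-2\mathbf v_i^T\mathbf D_{ij}\mathbf v_j$, using the symmetry of $\mathbf D_{ij}$. Collecting the diagonal pieces at each vertex $a$ gives $\mathbf v_a^T\bigl(\sum_{k\ne a}\mathbf D_{ak}\bigr)\mathbf v_a$. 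The cross terms give $\sum_{a\ne b}\mathbf v_a^T(-\mathbf D_{ab})\mathbf v_b$. Together these match \eqref{eq:LDelta_entrywise}, since terms involving $\mathbf v_1=0$ drop out and the diagonal blocks still include the edges $\mathbf D_{a1}$.

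Next, I will substitute $\mathbf D_{ij}=\delta_{ij}\hat{\bq}_{ij}\hat{\bq}_{ij}^T$, which turns each summand into $\delta_{ij}\bigl((\mathbf v_i-\mathbf v_j)\cdot\hat{\bq}_{ij}\bigr)^2$. I then split the edges into those incident to body~1 and those among bodies $2,\dots,N$. For an edge $\{1,a\}$ the term is $\delta_{1a}(\mathbf v_a\cdot\hat{\bq}_{1a})^2$, since the sign is irrelevant after squaring. This yields \eqref{eq:LDelta_qf_final}. Nonnegativity is then immediate from $\delta_{ij}=3M\mu_{ij}/q_{ij}^3>0$ as in \eqref{eq:delta_def}.

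There is no real obstacle here. The only care needed is in the bookkeeping step matching the grounded diagonal blocks to the full sums over $k\neq a$, which include $k=1$. Setting $\mathbf v_1=0$ handles this cleanly.
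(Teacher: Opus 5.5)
Your proposal is correct and follows essentially the same route as the paper. You set the body-$1$ block to zero, use the edge-sum identity for the full matrix-weighted Laplacian, substitute the rank-one weights $\delta_{ij}\hat{\mathbf q}_{ij}\hat{\mathbf q}_{ij}^T$, and split off the edges incident to body $1$. The only difference is that the paper cites the edge-sum identity from its appendix lemma, whereas you verify it inline by the same expansion.
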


\begin{proof}
The matrix $L^\Delta$ is the reduced matrix-weighted Laplacian, at body
$1$, of the complete graph $K_N$ with edge weights
\[
\mathbf D_{ij}=\delta_{ij}\,\hat{\bq}_{ij}\hat{\bq}_{ij}^T.
\]
Since $\delta_{ij}>0$ and $\hat{\bq}_{ij}\hat{\bq}_{ij}^T$ is a rank-one
orthogonal projection, each edge weight $\mathbf D_{ij}$ is positive
semidefinite, so by Lemma~\ref{lem:matrix_weighted_laplacian_psd} the
full (ungrounded) Laplacian $L^\Delta_{\mathrm{full}}$ satisfies, for
every $\mathbf x=(\mathbf x_1,\ldots,\mathbf x_N)$,
\[
\mathbf x^T L^\Delta_{\mathrm{full}} \mathbf x
=
\sum_{1\le i<j\le N}
\delta_{ij}\bigl((\mathbf x_i-\mathbf x_j)\cdot\hat{\bq}_{ij}\bigr)^2
\ge 0.
\]

Given $\mathbf v=(\mathbf v_2,\ldots,\mathbf v_N)\in\mathbb R^{2(N-1)}$, set
$\mathbf x_1:=0$ and $\mathbf x_a:=\mathbf v_a$ for $a\ge2$. Writing
$L^\Delta_{\mathrm{full}}$ in block form with the body-$1$ block
separated out,
\[
L^\Delta_{\mathrm{full}}
=
\begin{pmatrix}
(L^\Delta_{\mathrm{full}})_{11} & \ast\\
\ast & L^\Delta
\end{pmatrix},
\]
every term in $\mathbf x^TL^\Delta_{\mathrm{full}}\mathbf x$ involving the
first block is multiplied by $\mathbf x_1=0$ and vanishes, leaving
\[
\mathbf x^T L^\Delta_{\mathrm{full}} \mathbf x
=
\mathbf v^T L^\Delta \mathbf v.
\]

Splitting the edge sum above according to whether an edge touches body
$1$,
\[
\mathbf x^T L^\Delta_{\mathrm{full}} \mathbf x
=
\sum_{a=2}^{N}\delta_{1a}\bigl((\mathbf x_1-\mathbf x_a)\cdot\hat{\bq}_{1a}\bigr)^2
+
\sum_{2\le a<b\le N}\delta_{ab}\bigl((\mathbf x_a-\mathbf x_b)\cdot\hat{\bq}_{ab}\bigr)^2.
\]

Since \(\mathbf x_1=0\) and \(\mathbf x_a=\mathbf v_a\) for \(a\ge2\),
the edges incident to body \(1\) give the first sum in
\eqref{eq:LDelta_qf_final}, while the remaining edges give the second.
Thus \eqref{eq:LDelta_qf_final} follows. In particular,
\(L^\Delta\succeq0\).

\end{proof}
\begin{remark}[Topological decoupling and the transverse network]
The factorization \(\widetilde L=\ell\otimes\mathbf I_2\) separates the
topology of the interaction network from the spatial geometry of the
configuration: the reduced transverse Laplacian $\ell$ encodes the graph
structure through the signed scalar weights $\alpha_{ij}^{\perp}$, while
the factor $\mathbf I_2$ reflects the isotropy of the transverse
contribution in the plane. Thus $\ell$ is exactly the stiffness matrix
of an idealized one-dimensional network in which the $\alpha_{ij}^\perp$
act as signed spring constants. By contrast, $L^\Delta$ admits no such
factorization: its rank-one weights
$\delta_{ij}\hat{\bq}_{ij}\hat{\bq}_{ij}^{T}$ retain the directional
geometry of the pair vectors, so its topology and geometry cannot be
disentangled in the same way.
\end{remark}

\section{Specialization to the Four Body Problem}
\label{sec:four-body-problem}
We now specialize the preceding constructions to the planar four-body problem ($N=4$). In this setting, the ambient pair space consists of six mutual distance vectors. We partition these into three independent pairs connected to the first body ($\mathbf{q}_{12}, \mathbf{q}_{13}, \mathbf{q}_{14}$) and three dependent pairs ($\mathbf{q}_{23}, \mathbf{q}_{24}, \mathbf{q}_{34}$). The linear realizability subspace $\mathcal{C}$ is defined by three fundamental triangle relations:
\begin{align}
    \mathbf{q}_{12} + \mathbf{q}_{23} - \mathbf{q}_{13} &= 0, \nonumber\\
    \mathbf{q}_{12} + \mathbf{q}_{24} - \mathbf{q}_{14} &= 0, \label{eq:triangle_4body} \\
    \mathbf{q}_{13} + \mathbf{q}_{34} - \mathbf{q}_{14} &= 0. \nonumber
\end{align}

Assigning vector Lagrange multipliers $\boldsymbol \phi_{23}, \boldsymbol \phi_{24}, \boldsymbol \phi_{34} \in \mathbb{R}^2$ to these constraints, the augmented pair-space functional takes the explicit form:
\begin{equation}
\label{eq:F_4body}
\begin{aligned}
    \mathcal{F}(\mathbf{q}, \boldsymbol \phi) &= U_\pi(\mathbf{q}) + \frac{\omega^2}{2} I_\pi(\mathbf{q}) + \boldsymbol \phi_{23} \cdot (\mathbf{q}_{12} + \mathbf{q}_{23} - \mathbf{q}_{13}) \\
    &\quad + \boldsymbol \phi_{24} \cdot (\mathbf{q}_{12} + \mathbf{q}_{24} - \mathbf{q}_{14}) + \boldsymbol \phi_{34} \cdot (\mathbf{q}_{13} + \mathbf{q}_{34} - \mathbf{q}_{14}).
\end{aligned}
\end{equation}

Taking gradients with respect to the pair vectors yields the exact constraint forces $\mathbf{K}_{ij}$. 
For the dependent pairs, the general formula \eqref{eq:K_dependent} reduces to
\[
\mathbf{K}_{23}={\boldsymbol \phi}_{23}, \qquad
\mathbf{K}_{24}={\boldsymbol \phi}_{24}, \qquad
\mathbf{K}_{34}={\boldsymbol \phi}_{34}.
\]
For the independent pairs, the general formula \eqref{eq:K_independent} reduces to
\begin{align*} 
    \mathbf{K}_{12} = {\boldsymbol \phi}_{23} + {\boldsymbol \phi}_{24},  \qquad
    \mathbf{K}_{13} = -{\boldsymbol \phi}_{23} +{\boldsymbol \phi}_{34}, \qquad    \mathbf{K}_{14} = -{\boldsymbol \phi}_{24} - {\boldsymbol \phi}_{34}. 
\end{align*}

We next specialize the Hessian decomposition. In the four-body case, the
restricted Hessian \(H_{\mathcal C}\) is a \(6\times 6\) matrix, naturally
organized as a \(3\times 3\) array of \(2\times 2\) blocks corresponding to the
independent pair vectors
\(\mathbf{q}_{12}, \mathbf{q}_{13}, \mathbf{q}_{14}\). The general splitting
\[
H_{\mathcal C}=L^\Delta+\ell\otimes\mathbf{I}_2
\]
therefore becomes particularly concrete. Here \(L^\Delta\) is the  gap
Laplacian, encoding the radial contributions, while \(\ell\) is the reduced
transverse Laplacian, now a \(3\times 3\) signed scalar matrix. Its entries are

\begin{equation}
\label{eq:ell_4body}
    \ell = 
    \begin{pmatrix}
        \alpha_{12}^\perp + \alpha_{23}^\perp + \alpha_{24}^\perp & -\alpha_{23}^\perp & -\alpha_{24}^\perp \\[6pt]
        -\alpha_{23}^\perp & \alpha_{13}^\perp + \alpha_{23}^\perp + \alpha_{34}^\perp & -\alpha_{34}^\perp \\[6pt]
        -\alpha_{24}^\perp & -\alpha_{34}^\perp & \alpha_{14}^\perp + \alpha_{24}^\perp + \alpha_{34}^\perp
    \end{pmatrix}.
\end{equation}
Notice that $\ell$ is precisely the full $4 \times 4$ transverse Laplacian matrix $\ell_{\mathrm{full}}$ with the first row and column deleted. 

The four-body form of the reduced transverse Laplacian $\ell$ is especially
useful, because in this setting the transverse part becomes highly degenerate.
This degeneracy will allow us to derive simple eigenvalue criteria for the
positive definiteness of $H_{\mathcal{C}}$. We begin by recording the basic
algebraic fact underlying this simplification:

\begin{proposition}\label{prop:rank1}
For any planar, non-collinear four-body central configuration, the reduced transverse Laplacian matrix $\ell$ has rank $1$.
\end{proposition}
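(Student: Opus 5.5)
The plan is to bound the rank from above using the kernel information already contained in Proposition~\ref{prop:laplacian_kernel}, and from below by showing $\ell\neq 0$.

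\emph{Upper bound.} By Proposition~\ref{prop:laplacian_kernel}, at a central configuration $\ell_{\mathrm{full}}\mathbf X=\ell_{\mathrm{full}}\mathbf Y=0$. By Remark~\ref{rmk:transl-degeneracy-and-grounding}, also $\ell_{\mathrm{full}}\mathbf 1=0$. Since the configuration is non-collinear, the vectors $\mathbf 1,\mathbf X,\mathbf Y\in\mathbb R^4$ are linearly independent. Indeed, a relation $a\mathbf 1+b\mathbf X+c\mathbf Y=0$ with $(b,c)\neq 0$ would place all four bodies on the line $a+bx+cy=0$, and if $b=c=0$ then $a=0$. Hence $\dim\ker\ell_{\mathrm{full}}\ge 3$, so $\operatorname{rank}\ell_{\mathrm{full}}\le 1$.

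To pass to the grounded matrix, note that $\ell$ is a $3\times3$ principal submatrix of $\ell_{\mathrm{full}}$. The rank of a submatrix never exceeds that of the whole matrix, so $\operatorname{rank}\ell\le 1$. Alternatively, one can translate so that $\mathbf r_1=0$. Then the vectors $(x_2,x_3,x_4)$ and $(y_2,y_3,y_4)$, which are $-\mathbf q_{1j}$ componentwise, lie in $\ker\ell$. This is because row $a$ of $\ell_{\mathrm{full}}$ applied to a vector with first entry $0$ equals row $a-1$ of $\ell$ applied to the truncation. Non-collinearity of $\mathbf q_{12},\mathbf q_{13},\mathbf q_{14}$ makes these two vectors independent, which gives $\dim\ker\ell\ge 2$ directly.

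\emph{Lower bound.} It remains to show that $\ell\neq0$. Suppose $\ell=0$. Its off-diagonal entries force $\alpha_{23}^\perp=\alpha_{24}^\perp=\alpha_{34}^\perp=0$, and its diagonal entries then force $\alpha_{12}^\perp=\alpha_{13}^\perp=\alpha_{14}^\perp=0$. So every $\alpha_{ij}^\perp$ vanishes, which means $q_{ij}^3=M/\omega^2$ for all pairs. All six mutual distances would then be equal. Four points in the plane cannot be pairwise equidistant, since that is the regular tetrahedron and requires three dimensions. This contradiction gives $\operatorname{rank}\ell=1$.

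The only step requiring care is the independence argument: $\mathbf 1,\mathbf X,\mathbf Y$ must be independent in $\mathbb R^4$, or the grounded vectors in $\mathbb R^3$. Both reduce immediately to non-collinearity, so I expect no real obstacle. The equal-distance contradiction is elementary; if a formal justification is wanted, the vanishing of the Cayley--Menger determinant fails for equal distances.
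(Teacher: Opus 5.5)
Your proof is correct, but your primary upper-bound argument takes a different route from the paper's. The paper grounds first. It uses the triangle relations to rewrite the balance equations at bodies $2,3,4$ as $(\ell\otimes\mathbf I_2)(\mathbf q_{12},\mathbf q_{13},\mathbf q_{14})^T=0$, then exhibits the two independent coordinate vectors $x,y\in\mathbb R^3$ in $\ker\ell$. That is essentially your ``alternative'' argument.

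Your main argument instead stays with the ungrounded matrix. You take $\mathbf 1,\mathbf X,\mathbf Y$ as three independent kernel vectors of $\ell_{\mathrm{full}}$ and pass to the principal submatrix by the rank inequality. This yields the stronger intermediate fact $\operatorname{rank}\ell_{\mathrm{full}}=1$, which is visibly independent of which body is grounded.

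Because $\ell_{\mathrm{full}}$ is symmetric, this also forces $\ell_{\mathrm{full}}=c\,\mathbf d\mathbf d^T$. Here $\mathbf d$ spans the one-dimensional complement $\operatorname{span}\{\mathbf 1,\mathbf X,\mathbf Y\}^\perp$, i.e.\ it is the vector of signed areas $\Delta_i$. Reading off the off-diagonal entries then gives Dziobek's relation $\alpha_{ij}^\perp=\kappa\,\Delta_i\Delta_j$ directly, rather than quoting it from the literature as the paper does in its later remark.

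The paper's grounded version has its own advantage. It is phrased directly in the pair-space variables, and it produces exactly the identities $\ell x=\ell y=0$ that are reused in Corollary~\ref{cor:ell-factorization}.

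Your lower-bound argument is the paper's. You add one step it leaves implicit: $\ell=0$ first kills $\alpha_{23}^\perp,\alpha_{24}^\perp,\alpha_{34}^\perp$ through the off-diagonal entries, and then the diagonal entries kill the remaining three.
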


\begin{proof}
At a planar four-body central configuration, the coefficients
\(\alpha_{ij}^{\perp}\) satisfy Equation \eqref{eq:alpha_perp_balance}, that is, 
\(\sum_{j\ne i}\alpha_{ij}^{\perp}\,\bq_{ij}=0,\) for \(i=1,2,3,4.\)
We now ground body \(1\). Using the triangle relations
\[
\bq_{23}=\bq_{13}-\bq_{12},\qquad
\bq_{24}=\bq_{14}-\bq_{12},\qquad
\bq_{34}=\bq_{14}-\bq_{13},
\]
the equations corresponding to \(i=2,3,4\) become
\begin{align*}
-(\alpha_{12}^{\perp}+\alpha_{23}^{\perp}+\alpha_{24}^{\perp})\,\bq_{12}
+\alpha_{23}^{\perp}\,\bq_{13}
+\alpha_{24}^{\perp}\,\bq_{14}
&=0,\\
\alpha_{23}^{\perp}\,\bq_{12}
-(\alpha_{13}^{\perp}+\alpha_{23}^{\perp}+\alpha_{34}^{\perp})\,\bq_{13}
+\alpha_{34}^{\perp}\,\bq_{14}
&=0,\\
\alpha_{24}^{\perp}\,\bq_{12}
+\alpha_{34}^{\perp}\,\bq_{13}
-(\alpha_{14}^{\perp}+\alpha_{24}^{\perp}+\alpha_{34}^{\perp})\,\bq_{14}
&=0.
\end{align*}
Multiplying by $ -1 $ and recalling the explicit entries of the matrix $ \ell $, this is exactly equivalent to the block-matrix equation:
\[
	(\ell\otimes {\mathbf  I}_2)
\begin{pmatrix}
\bq_{12}\\
\bq_{13}\\
\bq_{14}
\end{pmatrix}
=0.
\]

By resolving the vectors into their Cartesian components, 
\[
\bq_{12}=(x_{12},y_{12}),\qquad
\bq_{13}=(x_{13},y_{13}),\qquad
\bq_{14}=(x_{14},y_{14}),
\]
and  defining the three-dimensional coordinate vectors
\[
x=
\begin{pmatrix}
x_{12}\\ x_{13}\\ x_{14}
\end{pmatrix},
\qquad
y=
\begin{pmatrix}
y_{12}\\ y_{13}\\ y_{14}
\end{pmatrix},
\]
the block-matrix equation decouples into:
\[
\ell x=0,
\qquad
\ell y=0.
\]
Since the configuration is non-collinear, the vectors \(x\) and \(y\) are
linearly independent (if they were dependent, all bodies would lie on a single line passing through body 1). Hence, the kernel of $ \ell $ contains at least two linearly independent vectors:
\[
\dim\ker(\ell)\ge 2,
\]
and therefore
\[
\operatorname{rank}(\ell)\le 1.
\]

It remains to show that \(\ell\neq 0\). If \(\ell=0\), then \(\alpha_{ij}^{\perp}=0\) for every pair \(i<j\). By the definition of
\(\alpha_{ij}^{\perp}\), this means
\[
\omega^2=\frac{M}{q_{ij}^{3}}
\qquad\text{for all } i<j,
\]
so all six mutual distances are equal. However, four distinct points in
\(\mathbb R^2\) cannot be pairwise equidistant. This contradiction ensures that
\(\ell\neq 0\). Consequently,
\[
\operatorname{rank}(\ell)=1.
\qedhere
\]
\end{proof}
Since $\ell$ has rank $1$ by Proposition~\ref{prop:rank1}, its spectrum consists
of a single non-zero eigenvalue (equal to its trace) together with a zero
eigenvalue of multiplicity $2$; the following corollary records the resulting
spectrum of $\widetilde L=\ell\otimes\mathbf I_2$.
\begin{corollary}\label{cor:tildeL-eigenvalues}
For any planar non-collinear four-body central configuration, the matrix
\(\widetilde{L}=\ell\otimes \mathbf{I}_2\) has exactly two non-zero eigenvalues, both
equal to
\[
\tau:=\operatorname{tr}(\ell)
=
\alpha_{12}^{\perp}+\alpha_{13}^{\perp}+\alpha_{14}^{\perp}
+2(\alpha_{23}^{\perp}+\alpha_{24}^{\perp}+\alpha_{34}^{\perp}).
\]
In particular,
\[
\operatorname{spec}(\widetilde{L})
=
\{\tau,\tau,0,0,0,0\}.
\]
\end{corollary}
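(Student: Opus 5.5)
The plan is to read off the spectrum of \(\ell\) from Proposition~\ref{prop:rank1} and then transfer it to \(\widetilde L=\ell\otimes\mathbf I_2\) using the standard spectral behaviour of Kronecker products.

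First, \(\ell\) is a real symmetric \(3\times3\) matrix: its off-diagonal entries \(-\alpha_{ab}^\perp\) are symmetric because \(\alpha_{ab}^\perp=\alpha_{ba}^\perp\). Hence \(\ell\) is orthogonally diagonalizable, and its rank equals the number of its nonzero eigenvalues counted with multiplicity. By Proposition~\ref{prop:rank1}, \(\operatorname{rank}\ell=1\). So exactly one eigenvalue \(\tau\) is nonzero, and \(0\) has multiplicity \(2\); concretely, \(\ker\ell\) is spanned by the coordinate vectors \(x,y\) from that proof. Since the trace is the sum of the eigenvalues, \(\tau=\operatorname{tr}(\ell)\). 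In particular \(\tau\neq0\), because otherwise \(\ell\) would be a symmetric matrix with all eigenvalues zero, hence \(\ell=0\), contradicting the rank-one statement.

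Second, I compute \(\operatorname{tr}\ell\) from the diagonal entries in \eqref{eq:ell_4body}:
\[
\bigl(\alpha_{12}^\perp+\alpha_{23}^\perp+\alpha_{24}^\perp\bigr)
+\bigl(\alpha_{13}^\perp+\alpha_{23}^\perp+\alpha_{34}^\perp\bigr)
+\bigl(\alpha_{14}^\perp+\alpha_{24}^\perp+\alpha_{34}^\perp\bigr).
\]
Each dependent-pair coefficient \(\alpha_{23}^\perp,\alpha_{24}^\perp,\alpha_{34}^\perp\) appears twice, and each independent-pair coefficient once. This gives the stated formula for \(\tau\).

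Third, I pass to \(\widetilde L\). If \(\ell=Q\Lambda Q^T\) with \(Q\) orthogonal and \(\Lambda=\diag(\tau,0,0)\), then by the mixed-product property
\[
\ell\otimes\mathbf I_2=(Q\otimes\mathbf I_2)(\Lambda\otimes\mathbf I_2)(Q\otimes\mathbf I_2)^T,
\]
and \(Q\otimes\mathbf I_2\) is orthogonal. Moreover \(\Lambda\otimes\mathbf I_2=\diag(\tau,\tau,0,0,0,0)\). Therefore \(\operatorname{spec}(\widetilde L)=\{\tau,\tau,0,0,0,0\}\), and \(\widetilde L\) has exactly two nonzero eigenvalues, both equal to \(\tau\).

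There is no real obstacle here. The only point needing care is to justify that \(\tau\neq0\), which requires symmetry of \(\ell\) (a nonzero nilpotent would otherwise be conceivable); symmetry settles it as explained above.
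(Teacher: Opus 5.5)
Your proposal is correct and follows essentially the same route as the paper: Proposition~\ref{prop:rank1} gives a single nonzero eigenvalue equal to $\operatorname{tr}(\ell)$, the trace is read off from the diagonal entries, and the Kronecker structure of $\ell\otimes\mathbf I_2$ repeats each eigenvalue twice. Your appeal to the symmetry of $\ell$, which justifies equating its rank with its number of nonzero eigenvalues, and your orthogonal diagonalization of $\widetilde L$ make explicit steps that the paper leaves implicit.
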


\begin{proof}
Since \(\ell\) has rank \(1\) by Proposition~\ref{prop:rank1}, it has
exactly one non-zero eigenvalue \(\tau\), and \(\tau=\operatorname{tr}(\ell)\)
because the trace equals the sum of the eigenvalues. Reading off the diagonal
entries of the grounded scalar Laplacian gives
\begin{equation}\label{eq:trace-ell}
\begin{aligned}
\operatorname{tr}(\ell)
&=
(\alpha_{12}^{\perp}+\alpha_{23}^{\perp}+\alpha_{24}^{\perp})
+
(\alpha_{13}^{\perp}+\alpha_{23}^{\perp}+\alpha_{34}^{\perp})
+
(\alpha_{14}^{\perp}+\alpha_{24}^{\perp}+\alpha_{34}^{\perp})\\
&=
\alpha_{12}^{\perp}+\alpha_{13}^{\perp}+\alpha_{14}^{\perp}
+2(\alpha_{23}^{\perp}+\alpha_{24}^{\perp}+\alpha_{34}^{\perp}).
\end{aligned}
\end{equation}
Finally, since \(\widetilde{L}=\ell\otimes \mathbf{I}_2\), the spectrum of
\(\widetilde{L}\) consists of each eigenvalue of \(\ell\) repeated with
multiplicity \(2\). Therefore
\[
\operatorname{spec}(\widetilde{L})=\{\tau,\tau,0,0,0,0\}.
\qedhere
\]
\end{proof}
\begin{corollary}\label{cor:ell-factorization}
For any planar non-collinear four-body central configuration with
\(\alpha_{23}^\perp,\alpha_{24}^\perp,\alpha_{34}^\perp \neq 0\), the reduced
transverse Laplacian \(\ell\) admits the rank-one factorization
\begin{equation}\label{eq:ell-factorization}
\ell=\sigma\,\mathbf w\mathbf w^T,
\qquad
\sigma=-\alpha_{23}^\perp\alpha_{24}^\perp\alpha_{34}^\perp,
\qquad
\mathbf w=
\begin{pmatrix}
1/\alpha_{34}^\perp\\[4pt]
1/\alpha_{24}^\perp\\[4pt]
1/\alpha_{23}^\perp
\end{pmatrix}.
\end{equation}
In particular, \(\mathbf w\) is an eigenvector of \(\ell\) with eigenvalue
\(\sigma\|\mathbf w\|^2=\operatorname{tr}(\ell)\).
Consequently,
\begin{equation}\label{eq:tildeL-factorization}
\widetilde L=\ell\otimes \mathbf I_2
=\sigma\,\mathbf W\mathbf W^T,
\qquad
\mathbf W:=\mathbf w\otimes \mathbf I_2,
\end{equation}
and the independent pair vectors satisfy the linear relation
\begin{equation}\label{eq:pair-dependency}
\frac{1}{\alpha_{34}^\perp}\mathbf q_{12}
+\frac{1}{\alpha_{24}^\perp}\mathbf q_{13}
+\frac{1}{\alpha_{23}^\perp}\mathbf q_{14}
=0.
\end{equation}
\end{corollary}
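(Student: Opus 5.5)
Since $\ell$ is a real symmetric $3\times3$ matrix of rank one (Proposition~\ref{prop:rank1}), it can be written as $\ell=\sigma\,\mathbf w\mathbf w^T$ for some nonzero vector $\mathbf w$ and scalar $\sigma\neq0$. It therefore suffices to pin down $\mathbf w$ up to scaling, and then to fix $\sigma$ by comparing a single entry.

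\textbf{Off-diagonal entries.} From \eqref{eq:ell_4body},
\[
\sigma w_1w_2=-\alpha_{23}^\perp,\qquad
\sigma w_1w_3=-\alpha_{24}^\perp,\qquad
\sigma w_2w_3=-\alpha_{34}^\perp.
\]
Under the hypothesis these are all nonzero, so every $w_a\neq0$.

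\textbf{Ratios.} Dividing the equations pairwise gives $w_2/w_3=\alpha_{23}^\perp/\alpha_{24}^\perp$ and $w_1/w_3=\alpha_{23}^\perp/\alpha_{34}^\perp$. Hence
\[
\mathbf w\propto\bigl(1/\alpha_{34}^\perp,\;1/\alpha_{24}^\perp,\;1/\alpha_{23}^\perp\bigr)^T .
\]

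\textbf{Normalization.} Take $\mathbf w$ exactly as in \eqref{eq:ell-factorization}. The $(1,2)$ equation becomes $\sigma/(\alpha_{34}^\perp\alpha_{24}^\perp)=-\alpha_{23}^\perp$, which gives $\sigma=-\alpha_{23}^\perp\alpha_{24}^\perp\alpha_{34}^\perp$.

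\textbf{Diagonal entries.} These need no separate check. A rank-one symmetric matrix with all off-diagonal entries nonzero is determined by its off-diagonal entries: $\ell_{11}=\ell_{12}\ell_{13}/\ell_{23}$, and similarly for the others, since every $2\times2$ minor vanishes. The $\sigma\mathbf w\mathbf w^T$ we built also satisfies these minor relations, so the diagonals agree automatically.

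\textbf{Consequences.} Applying the factorization to $\mathbf w$ gives $\ell\mathbf w=\sigma\|\mathbf w\|^2\mathbf w$. Taking traces gives $\operatorname{tr}\ell=\sigma\|\mathbf w\|^2$. The Kronecker identity $(\mathbf w\mathbf w^T)\otimes\mathbf I_2=(\mathbf w\otimes\mathbf I_2)(\mathbf w\otimes\mathbf I_2)^T$ then yields \eqref{eq:tildeL-factorization}.

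\textbf{The pair relation.} The proof of Proposition~\ref{prop:rank1} showed $\ell x=\ell y=0$, where $x$ and $y$ collect the components of $\mathbf q_{12},\mathbf q_{13},\mathbf q_{14}$. Since $\sigma\neq0$, $\ell x=0$ forces $\mathbf w^Tx=0$, and likewise $\mathbf w^Ty=0$. Writing these two conditions as one vector equation gives \eqref{eq:pair-dependency}.

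The only delicate step is making sure the diagonal entries of $\ell$ are consistent with the factorization. The argument above handles this through the vanishing $2\times2$ minors, which hold because $\operatorname{rank}\ell=1$.
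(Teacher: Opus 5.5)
Your proposal is correct and follows essentially the paper's route: match the off-diagonal entries of $\ell$, handle the diagonal through the vanishing $2\times2$ minors of a rank-one matrix, then apply the Kronecker mixed-product identity and deduce $\mathbf w^Tx=\mathbf w^Ty=0$ from $\ell x=\ell y=0$. The differences are cosmetic. You derive $\mathbf w$ from the ratios of an a priori factorization $\ell=\sigma\mathbf w\mathbf w^T$, whereas the paper posits $\mathbf w$ and checks it; in your route this makes diagonal agreement automatic, so the minor argument is redundant. You also obtain $\operatorname{tr}(\ell)=\sigma\|\mathbf w\|^2$ directly from $\operatorname{tr}(\mathbf w\mathbf w^T)=\|\mathbf w\|^2$, rather than by appealing to uniqueness of the nonzero eigenvalue.
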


\begin{proof}
By Proposition~\ref{prop:rank1}, the matrix \(\ell\) has rank \(1\). Since \(\ell\)
is symmetric, it must therefore be expressible as a scalar multiple of an outer
product \(\mathbf w\mathbf w^T\). We now determine \(\mathbf w\) and the scalar
factor explicitly.

From \eqref{eq:ell_4body}, the off-diagonal entries of \(\ell\) are
\[
\ell_{12}=-\alpha_{23}^\perp,
\qquad
\ell_{13}=-\alpha_{24}^\perp,
\qquad
\ell_{23}=-\alpha_{34}^\perp.
\]
If we set
\[
\mathbf w=
\begin{pmatrix}
1/\alpha_{34}^\perp\\[4pt]
1/\alpha_{24}^\perp\\[4pt]
1/\alpha_{23}^\perp
\end{pmatrix},
\qquad
\sigma=-\alpha_{23}^\perp\alpha_{24}^\perp\alpha_{34}^\perp,
\]
then the off-diagonal entries of \(\sigma\,\mathbf w\mathbf w^T\) are
\[
\sigma w_1w_2=-\alpha_{23}^\perp,
\qquad
\sigma w_1w_3=-\alpha_{24}^\perp,
\qquad
\sigma w_2w_3=-\alpha_{34}^\perp.
\]
Thus \(\sigma\,\mathbf w\mathbf w^T\) has exactly the same off-diagonal entries as
\(\ell\).

To conclude that the two matrices agree, it remains only to check the
diagonal entries.
By definition \eqref{eq:ell_4body}, $\ell_{11}=\alpha_{12}^{\perp}+\alpha_{23}^{\perp}+\alpha_{24}^{\perp}$.
On the other hand, since $\operatorname{rank}(\ell)=1$ by Proposition~\ref{prop:rank1},
every $2\times2$ minor of $\ell$ vanishes; in particular
$\ell_{11}\ell_{23}=\ell_{12}\ell_{13}$, so
\[
\ell_{11}=\frac{\ell_{12}\ell_{13}}{\ell_{23}}
=\frac{(-\alpha_{23}^{\perp})(-\alpha_{24}^{\perp})}{-\alpha_{34}^{\perp}}
=-\frac{\alpha_{23}^{\perp}\alpha_{24}^{\perp}}{\alpha_{34}^{\perp}}.
\]
Comparing the two expressions for $\ell_{11}$ gives the identity
\[
\alpha_{12}^{\perp}+\alpha_{23}^{\perp}+\alpha_{24}^{\perp}
=-\frac{\alpha_{23}^{\perp}\alpha_{24}^{\perp}}{\alpha_{34}^{\perp}},
\]
which is exactly the diagonal entry needed for
\eqref{eq:ell-factorization}. The cyclic minors give the analogous
identities for $\ell_{22}$ and $\ell_{33}$.

Multiplying $\ell$ by $\mathbf w$ gives
\[
\ell \mathbf{w} = (\sigma \mathbf{w} \mathbf{w} ^T) \mathbf{w}
= \sigma \mathbf{w} (\mathbf{w} ^T \mathbf{w})
= (\sigma \| \mathbf{w} \| ^2) \mathbf{w}.
\]
so $\mathbf w$ is an eigenvector of $\ell$ with eigenvalue
$\lambda=\sigma\|\mathbf w\|^2$. This eigenvalue is nonzero: $\sigma\neq0$ by
hypothesis, and $\mathbf w\neq0$ since each of its entries is the reciprocal
of a nonzero quantity. Since $\ell$ is symmetric of rank $1$ by
Proposition~\ref{prop:rank1} $\lambda$ is its unique nonzero eigenvalue.


Next, using the mixed-product property of the Kronecker product,
\((A\otimes B)(C\otimes D)=(AC)\otimes(BD)\), we obtain
\[
\widetilde L
=
(\sigma\,\mathbf w\mathbf w^T)\otimes \mathbf I_2
=
\sigma\,(\mathbf w\otimes \mathbf I_2)(\mathbf w^T\otimes \mathbf I_2)
=
\sigma\,\mathbf W\mathbf W^T,
\]
which proves \eqref{eq:tildeL-factorization}.

Finally, let
\[
x=(x_{12},x_{13},x_{14})^T,
\qquad
y=(y_{12},y_{13},y_{14})^T.
\]
By Proposition~\ref{prop:rank1}, we have \(\ell x=0\) and \(\ell y=0\).
Substituting \(\ell=\sigma\,\mathbf w\mathbf w^T\) gives
\[
\sigma\,\mathbf w(\mathbf w^Tx)=0,
\qquad
\sigma\,\mathbf w(\mathbf w^Ty)=0.
\]
Since \(\sigma\neq0\) and \(\mathbf w\neq0\), it follows that
\[
\mathbf w^Tx=0,
\qquad
\mathbf w^Ty=0.
\]
Writing these two scalar identities in vector form yields
\[
\frac{1}{\alpha_{34}^\perp}\mathbf q_{12}
+\frac{1}{\alpha_{24}^\perp}\mathbf q_{13}
+\frac{1}{\alpha_{23}^\perp}\mathbf q_{14}
=0,
\]
which is exactly \eqref{eq:pair-dependency}.
\end{proof}

\begin{remark}[Fictitious center of mass]
Equation~\eqref{eq:pair-dependency} admits a geometric interpretation in 
the spirit of classical celestial mechanics. By expanding the pair vectors into 
absolute positions, $\bq_{1k} = \br_1 - \br_k$, the linear dependence rearranges 
into a generalized barycentric coordinate formula:
\[
\br_1 = \frac{ \tilde{m}_2 \br_2 + \tilde{m}_3 \br_3 + \tilde{m}_4 \br_4 }{ \tilde{m}_2 + \tilde{m}_3 + \tilde{m}_4 },
\]
where we have defined the fictitious masses 
$\tilde{m}_2 = 1/\alpha_{34}^\perp$, 
$\tilde{m}_3 = 1/\alpha_{24}^\perp$, and 
$\tilde{m}_4 = 1/\alpha_{23}^\perp$. 
This reveals that body $1$ sits exactly at the center of mass of bodies
$2$, $3$, and $4$, provided each is assigned a fictitious mass equal to the reciprocal
$1/\alpha_{ij}^\perp$ of the transverse eigenvalue of the edge opposite to it.
 An
identical argument, grounding at any other body in place of body $1$, shows
that in fact every body sits at the fictitious barycenter of the other
three. Note that these fictitious masses need not be positive --- unlike a physical center of
mass, this is a signed affine combination --- so the barycentric picture
is formal rather than literal. 
\end{remark}
\begin{remark}[Connection with Dziobek areas]
Since the three pair vectors $\mathbf q_{12},\mathbf q_{13},\mathbf q_{14}$
lie in $\mathbb R^2$, they are automatically linearly dependent. To make the
coefficients of this dependence explicit, for $i=1,\dots,4$ let
\[
\Delta_i = (-1)^{i}\,\frac12\det
\begin{pmatrix}
1 & x_j & y_j \\
1 & x_k & y_k \\
1 & x_l & y_l
\end{pmatrix},
\qquad \{j,k,l\}=\{1,2,3,4\}\setminus\{i\}\ \text{in increasing order},
\]
so that $\Delta_i$ is, up to the alternating sign $(-1)^i$, the signed area
of the triangle formed by the three bodies other than body $i$. 
With this convention, the linear dependence of the pair
vectors takes the form
\begin{equation}\label{eq:coplanarity}
    \Delta_2\mathbf q_{12}
    +\Delta_3\mathbf q_{13}
    +\Delta_4\mathbf q_{14}=0.
\end{equation}
It is a classical fact, due to Dziobek~\cite{dziobek1900ueber} (see
also~\cite{moeckel2001generic} for a modern treatment), that
\[
    \alpha_{ij}^{\perp}=\kappa\,\Delta_i\Delta_j,
\]
where $\kappa\neq 0$ is a global constant. 
Substituting this relation into Eq.~\eqref{eq:pair-dependency} gives
\[
    \frac{\mathbf q_{12}}{\kappa\Delta_3\Delta_4}
    +\frac{\mathbf q_{13}}{\kappa\Delta_2\Delta_4}
    +\frac{\mathbf q_{14}}{\kappa\Delta_2\Delta_3}=0,
\]
and multiplying through by $\kappa\Delta_2\Delta_3\Delta_4$ recovers
Eq.~\eqref{eq:coplanarity}. Thus, Eq.~\eqref{eq:pair-dependency} is
precisely the classical Dziobek coplanarity relation among the grounded
pair vectors, expressed here in terms of the $\alpha_{ij}^\perp$ instead of the $ \Delta _i$'s.
\end{remark}

In the classical planar \(N\)-body problem, Palmore proved that the Morse
index of a central configuration is at most \(N-2\); see
\cite{moeckel2014lectures} for a modern account.  In particular, the index
is at most two for a planar four-body central configuration.  We recover this bound using the following lemma and the decomposition
\[
H_{\mathcal C}=\widetilde L+L^\Delta.
\]

\begin{lemma}\label{lem:negative-index-monotone}
For a real symmetric matrix \(A\), let \(n_-(A)\) denote the number of
negative eigenvalues of \(A\), counted with multiplicity. If \(A,B\) are
symmetric \(n\times n\) matrices and \(B\succeq 0\), then
\[
n_-(A+B)\leq n_-(A).
\]
\end{lemma}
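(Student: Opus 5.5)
The plan is to use the variational (Courant--Fischer) characterization of the negative index. For a real symmetric matrix \(C\), \(n_-(C)\) is the largest dimension of a subspace \(V\subseteq\mathbb R^n\) on which the quadratic form is negative definite:
\[
\mathbf x^T C\mathbf x<0 \quad \text{for all } \mathbf x\in V\setminus\{0\}.
\]
This follows from the spectral theorem in two directions. The span of the eigenvectors with negative eigenvalues is such a subspace, so \(n_-(C)\) is attained. Conversely, any such \(V\) meets the span of eigenvectors with eigenvalues \(\ge 0\) only in \(\{0\}\), which bounds \(\dim V\) by \(n_-(C)\).

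The main step is then immediate. Let \(V\) be a subspace of dimension \(n_-(A+B)\) on which \(A+B\) is negative definite. For \(\mathbf x\in V\setminus\{0\}\), the hypothesis \(B\succeq0\) gives
\[
\mathbf x^TA\mathbf x=\mathbf x^T(A+B)\mathbf x-\mathbf x^TB\mathbf x\le \mathbf x^T(A+B)\mathbf x<0,
\]
so \(A\) is also negative definite on \(V\). By the maximality characterization applied to \(A\), we get \(n_-(A)\ge\dim V=n_-(A+B)\).

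I would spell out the dimension-count step explicitly, since it is the only nontrivial ingredient. Let \(P\) be the span of the eigenvectors of \(A\) with nonnegative eigenvalues. Then \(\dim P=n-n_-(A)\) and \(\mathbf x^TA\mathbf x\ge0\) on \(P\). Since \(V\cap P=\{0\}\), we have \(\dim V+\dim P\le n\), which gives \(\dim V\le n_-(A)\).

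There is no real obstacle; the only care needed is to keep the strict and non-strict inequalities straight. This is also where \(B\succeq0\) (rather than \(B\succ0\)) suffices. In the paper the lemma will be applied with \(A=\widetilde L\), \(B=L^\Delta\) (Proposition~\ref{prop:LDelta_psd}), and \(n_-(\widetilde L)\le2\) by Corollary~\ref{cor:tildeL-eigenvalues}.
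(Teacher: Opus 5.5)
Your proof is correct and follows the paper's argument: take a subspace of dimension $n_-(A+B)$ on which $A+B$ is negative definite, use $B\succeq 0$ to show that $A$ is negative definite there, and apply the maximal-subspace characterization of $n_-$. The only difference is that you also prove that characterization, via the dimension count $V\cap P=\{0\}$, whereas the paper simply recalls it.
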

\begin{proof}
Recall that for any real symmetric matrix, the number of strictly negative eigenvalues is equal to the maximum dimension of a subspace on which its quadratic form is negative definite.

Let \(W \subseteq \mathbb{R}^n\) be a subspace of dimension \(n_-(A+B)\) on which \(A+B\) is negative definite. For every non-zero \(x \in W\), we have
\[
x^TAx = x^T(A+B)x - x^TBx.
\]
Because \(B\) is positive semidefinite (\(B \succeq 0\)), it follows that \(x^TBx \ge 0\). Therefore,
\[
x^TAx \le x^T(A+B)x < 0.
\]
This demonstrates that \(A\) is strictly negative definite on the entire subspace \(W\). Since \(n_-(A)\) represents the maximum dimension of any such subspace for \(A\), it must be that
\[
n_-(A) \ge \dim(W) = n_-(A+B),
\]
which completes the proof.
\end{proof}

\begin{proposition}\label{prop:negative-index-total}
For any planar non-collinear four-body central configuration the restricted
Hessian
\[
H_{\mathcal C}=\widetilde L+L^\Delta
\]
has at most two negative eigenvalues.
\end{proposition}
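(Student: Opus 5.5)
The plan is to combine Lemma~\ref{lem:negative-index-monotone} with the spectral information on $\widetilde L$ from Corollary~\ref{cor:tildeL-eigenvalues}. Take $A=\widetilde L$ and $B=L^\Delta$ in Lemma~\ref{lem:negative-index-monotone}. The hypothesis $B\succeq0$ holds by Proposition~\ref{prop:LDelta_psd}, which shows that the gap Laplacian is positive semidefinite for any configuration. The lemma then gives
\[
n_-(H_{\mathcal C}) = n_-(\widetilde L + L^\Delta)\le n_-(\widetilde L).
\]

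It remains to bound $n_-(\widetilde L)$. By Proposition~\ref{prop:rank1}, at a non-collinear planar four-body central configuration the reduced transverse Laplacian $\ell$ has rank one. Corollary~\ref{cor:tildeL-eigenvalues} then gives $\operatorname{spec}(\widetilde L)=\{\tau,\tau,0,0,0,0\}$ with $\tau=\operatorname{tr}(\ell)$. Hence $n_-(\widetilde L)=2$ if $\tau<0$ and $n_-(\widetilde L)=0$ otherwise, so in every case $n_-(\widetilde L)\le 2$. Combining the two inequalities gives $n_-(H_{\mathcal C})\le2$.

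No real obstacle is expected, since all ingredients are already established earlier in the paper. The one point to check is that the rank-one property, and hence the bound, genuinely needs the non-collinearity hypothesis: it is used in Proposition~\ref{prop:rank1} to obtain two independent kernel vectors $x,y$ of $\ell$. By contrast, Corollary~\ref{cor:ell-factorization} requires the extra condition that the $\alpha^\perp_{ij}$ be nonzero, so the argument should rely only on Corollary~\ref{cor:tildeL-eigenvalues} and not on that factorization. It may also be worth remarking that if $\tau\ge0$ the argument actually gives $n_-(H_{\mathcal C})=0$.
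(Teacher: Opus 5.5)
Your proposal is correct and matches the paper's proof: it bounds $n_-(\widetilde L)\le 2$ using Corollary~\ref{cor:tildeL-eigenvalues}, then applies Lemma~\ref{lem:negative-index-monotone} with $A=\widetilde L$ and $B=L^\Delta\succeq0$ from Proposition~\ref{prop:LDelta_psd}. Your side remarks are also correct: only the rank-one property is needed, not the factorization of Corollary~\ref{cor:ell-factorization}, and $\tau>0$ forces $n_-(H_{\mathcal C})=0$.
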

\begin{proof}
By Corollary~\ref{cor:tildeL-eigenvalues},
$\operatorname{spec}(\widetilde L)=\{\tau,\tau,0,0,0,0\}$, so
$n_-(\widetilde L)\le 2$. 
Since $L^\Delta\succeq 0$ (Proposition~\ref{prop:LDelta_psd}),
Lemma~\ref{lem:negative-index-monotone} gives
\[
n_-(H_{\mathcal C}) = n_-\bigl(\widetilde L+L^\Delta\bigr) \le n_-(\widetilde L) \le 2.
\]
Hence $H_{\mathcal C}$ has at most two negative eigenvalues.
\end{proof}
To relate the index of $H_{\mathcal C}$ to the Morse index on the
normalized configuration space, it remains to examine the direction
removed by fixing the scale. The next lemma shows that this dilation
direction is strictly positive.
\begin{lemma}[Positivity of the dilation mode]
\label{lem:dilation-positive}
At any planar central configuration, the quadratic form of $H_{\mathcal C}$
is strictly positive along the dilation direction $\mathbf v=\mathbf q$.
\end{lemma}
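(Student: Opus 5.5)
The plan is to compute the quadratic form directly in pair space, without using the block structure of $H_{\mathcal C}$. The vector $\mathbf v=\mathbf q$ is the grounded representative of the realizable perturbation $\bu=\bq\in\mathcal C$. By the congruence $H_{\mathcal C}=B^T(D^2_{\mathcal P}F)B$ we have
\[
\mathbf v^TH_{\mathcal C}\mathbf v=\bq^T(D^2_{\mathcal P}F)\,\bq=\sum_{i<j}\bq_{ij}^T\mathbf A_{ij}\bq_{ij}.
\]
The key step is then to evaluate each block on its own pair vector using the spectral form \eqref{eq:Aij_diag}. Since $\bq_{ij}=q_{ij}\hat{\bq}_{ij}$ is orthogonal to $\hat{\bq}_{ij}^\perp$, only the radial eigenvalue contributes:
\[
\bq_{ij}^T\mathbf A_{ij}\bq_{ij}=\alpha^\parallel_{ij}q_{ij}^2=\mu_{ij}\Bigl(\omega^2q_{ij}^2+\frac{2M}{q_{ij}}\Bigr),
\]
by \eqref{eq:alpha_rad}.

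Summing over all pairs and using \eqref{eq:Upi}--\eqref{eq:Ipi} gives
\[
\mathbf v^TH_{\mathcal C}\mathbf v=\omega^2 I_\pi+2U_\pi .
\]
This is strictly positive because all masses are positive and the configuration is collision-free, so every term is positive. One could also use $\omega^2=U/I$ to rewrite the result as $3U>0$. As a sanity check, this agrees with homogeneity: $U_\pi$ has degree $-1$, so its second derivative along the Euler vector field is $(-1)(-2)U_\pi=2U_\pi$, while $\tfrac{\omega^2}{2}I_\pi$ has degree $2$ and contributes $\omega^2 I_\pi$.

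There is no real obstacle here. The only point needing care is the identification of $\mathbf v=\mathbf q$ (the grounded vector $(\bq_{12},\dots,\bq_{1N})$) with the full pair vector $B\mathbf v=\bq$. This holds because $\bq$ satisfies the triangle relations \eqref{eq:triangle_relations}, so the pulled-back form is exactly the ambient form evaluated at $\bq$. As an alternative, one could use the splitting $L^\Delta+\widetilde L$: there $\widetilde L\mathbf q=0$ at a central configuration by Proposition~\ref{prop:laplacian_kernel}, and $L^\Delta$ contributes $\sum\delta_{ij}q_{ij}^2=3U_\pi>0$ by \eqref{eq:LDelta_qf_final}. This gives the same value $3U$ and confirms the computation.
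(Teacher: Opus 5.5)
Your proof is correct, but your main computation is not the paper's. The paper's entire proof is the argument you list as a sanity check. It applies the Euler identity $\mathbf q^T\nabla^2 g(\mathbf q)\,\mathbf q=k(k-1)g(\mathbf q)$ to $U_\pi$ ($k=-1$) and to $I_\pi$ ($k=2$), which gives $2U_\pi+\omega^2 I_\pi$, and then substitutes $\omega^2=U_\pi/I_\pi$ to obtain $3U_\pi$.

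You instead pull back through $B$, note that $B\mathbf q$ is the full pair vector because the triangle relations hold, and evaluate each block $\mathbf A_{ij}$ on its own pair vector, where only the radial eigenvalue survives. The homogeneity argument is shorter and does not need the block structure. Your version has two advantages. It makes explicit the pullback step that the paper uses tacitly when it treats $H_{\mathcal C}$ as the ambient Hessian evaluated at $\mathbf q$. It also gives positivity pair by pair, $\alpha^\parallel_{ij}q_{ij}^2>0$, rather than positivity of the total only.

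Your second route, via $H_{\mathcal C}=L^\Delta+\widetilde L$, is the most informative of the three. At a central configuration, the body-coordinate vector $(\mathbf 0,\bq_{12},\dots,\bq_{1N})=\mathbf 1\otimes\br_1-\br$ lies in $\ker L_{\mathrm{full}}$, by Proposition~\ref{prop:laplacian_kernel} together with $\ell_{\mathrm{full}}\mathbf 1=0$. Hence $\widetilde L\mathbf q=0$, and the dilation mode is controlled by the gap Laplacian alone: $\mathbf q^TL^\Delta\mathbf q=\sum_{i<j}\delta_{ij}q_{ij}^2=3U_\pi$. This explains the paper's value $3U_\pi$ structurally. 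The difference $\mathbf q^T\widetilde L\mathbf q=\sum_{i<j}\alpha^\perp_{ij}q_{ij}^2=\omega^2 I_\pi-U_\pi$ between $3U_\pi$ and $2U_\pi+\omega^2 I_\pi$ vanishes exactly by Euler's relation.
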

\begin{proof}
Recall that if $g$ is homogeneous of degree $k$, then
$\mathbf q^T\nabla^2g(\mathbf q)\,\mathbf q=k(k-1)g(\mathbf q)$, obtained
by differentiating $g(\lambda\mathbf q)=\lambda^kg(\mathbf q)$ twice in
$\lambda$ at $\lambda=1$. Since $U_\pi$ and $I_\pi$ are homogeneous of
degrees $-1$ and $2$, respectively, applying the identity to each yields:
\[
    \mathbf q^T\nabla^2U_\pi(\mathbf q)\,\mathbf q=2\,U_\pi(\mathbf q),
    \qquad
    \mathbf q^T\nabla^2I_\pi(\mathbf q)\,\mathbf q=2\,I_\pi(\mathbf q).
\]

The matrix $H_{\mathcal C}$ is defined as the Hessian $H_{\mathcal C} = \nabla^2 \left( U_\pi + \frac{\omega^2}{2}I_\pi \right)$. Evaluating the quadratic form along the dilation direction $\mathbf{v} = \mathbf{q}$ and substituting the central configuration relation $\omega^2 = \frac{U_\pi(\mathbf{q})}{I_\pi(\mathbf{q})}$, we obtain:

\[
    \mathbf v^TH_{\mathcal C}\mathbf v
    =
    2\,U_\pi(\mathbf q)+\omega^2 I_\pi(\mathbf q)
    =
    3\,U_\pi(\mathbf q).
\]
Since the potential energy $U_\pi(\mathbf{q})$ is strictly positive for any physical configuration, it follows that $\mathbf{v}^T H_{\mathcal C} \mathbf{v} > 0$. Thus, the quadratic form of $H_{\mathcal C}$ is strictly positive along the dilation direction.
\end{proof}

The matrix $H_{\mathcal{C}}$ is defined on the $6$-dimensional space of
configurations modulo translation alone. Palmore's classical Morse index,
on the other hand, is computed on the space of configurations with a fixed
center of mass and fixed size (moment of inertia $I_{\pi} = 1$), as
in~\cite{moeckel2014lectures}---a space obtained by additionally fixing
the scale. By Lemma~\ref{lem:dilation-positive}, the quadratic form of
$H_{\mathcal{C}}$ is strictly positive along this additional dilation
direction $\mathbf{v} = \mathbf{q}$. Since this direction is strictly
positive, it may be eliminated without changing $n_-$. Consequently,
$n_-(H_{\mathcal{C}})$ equals Palmore's Morse index, so
Proposition~\ref{prop:negative-index-total} recovers his classical bound
exactly.

\subsection{Generalized Eigenvalue Criterion}
\label{subsec:rayleigh_bound}
By Proposition~\ref{prop:LDelta_psd}, $L^\Delta$ is positive semidefinite.
To obtain a well-posed generalized eigenvalue criterion on the rotation-free
subspace, we must determine whether $\ker L^\Delta$ contains any directions
beyond the residual rotational symmetry. We continue in the planar $4$-body
setting, although the argument extends to general $N$ without essential
change.

Fix body $1$ as the reference point, and let
$\mathbf q = (\mathbf q_{12}^T,\mathbf q_{13}^T,\mathbf q_{14}^T)^T
\in \mathbb R^{6}$
collect the three independent grounded pair vectors. For
$\theta \in \mathbb R$, let
\[
R_\theta =
\begin{pmatrix}
\cos\theta & -\sin\theta\\
\sin\theta & \cos\theta
\end{pmatrix}
=
e^{\theta\mathbf J},
\qquad
\mathbf J =
\begin{pmatrix}
0 & -1\\
1 & 0
\end{pmatrix}.
\]
Thus, $\mathbf J$ is the infinitesimal generator of counterclockwise planar
rotations. Let
$\mathbb J := \mathbf I_3\otimes\mathbf J$ and
$\mathbb R_\theta := \mathbf I_3\otimes R_\theta
= e^{\theta\mathbb J}$ denote their block-diagonal lifts to
$\mathbb R^6$. A rotation of the entire configuration through angle
$\theta$ acts on the grounded pair vector by
\[
\mathbf q \longmapsto \mathbb R_\theta\mathbf q.
\]
In particular, for an infinitesimal angle $\varepsilon$,
\[
\mathbb R_\varepsilon\mathbf q
=
\mathbf q+\varepsilon\,\mathbb J\mathbf q+O(\varepsilon^2).
\]
Consequently,
\[
\mathbf v:=\mathbb J\mathbf q
=
(\mathbf J\mathbf q_{12},\mathbf J\mathbf q_{13},
\mathbf J\mathbf q_{14})^T\in\mathbb R^6
\]
is the infinitesimal rotational mode at $\mathbf q$.

For each pair $i,j$, the infinitesimal rotational displacement
$\mathbf J\mathbf q_{ij}$ is orthogonal to $\mathbf q_{ij}$:
\[
\mathbf J\mathbf q_{ij}\cdot\hat{\bq}_{ij}=0.
\]
For the terms in the quadratic-form identity
\eqref{eq:LDelta_qf_final}, observe that if
$\mathbf v=\mathbb J\mathbf q$, then
$\mathbf v_a=\mathbf J\mathbf q_{1a}$ for $a=2,3,4$, while, for
$2\le a<b\le4$,
\[
\mathbf v_a-\mathbf v_b
=
\mathbf J(\mathbf q_{1a}-\mathbf q_{1b})
=
\mathbf J\mathbf q_{ab}.
\]
Thus every radial projection appearing in
\eqref{eq:LDelta_qf_final} vanishes at $\mathbf v$, and hence
\[
\mathbf v^T L^\Delta\mathbf v=0.
\]
Since $L^\Delta\succeq0$, it follows that \(\mathbf v\in\ker L^\Delta.\)
Thus $\operatorname{span}\{\mathbf v\}\subseteq\ker L^\Delta$; the question
is whether this inclusion is strict, that is, whether $L^\Delta$ has any
degeneracy beyond the rotational mode.

We therefore define the rotation-free subspace
\[
\mathcal W
:=
\{\mathbf x\in\mathbb R^6:\mathbf x\cdot\mathbf v=0\}
=
\operatorname{span}\{\mathbf v\}^{\perp}.
\]
The next lemma shows that the rotational mode spans the entire kernel of
$L^\Delta$. It follows that the restriction
$L^\Delta|_{\mathcal W}$ is positive definite. Equivalently, the
regularized matrix
\[
L^\Delta_{\mathrm{reg}}
:=
L^\Delta+\frac{\mathbf v\mathbf v^T}{\|\mathbf v\|^2}
\]
is positive definite on $\mathbb R^6$.

\begin{lemma}[Non-Degeneracy of the Regularized Gap Laplacian]
\label{lem:LDelta_reg_pd}
For any planar non-collinear four-body central configuration with strictly
positive masses $m_i > 0$, the kernel of the gap Laplacian $L^\Delta$ is exactly
$\operatorname{span}\{\mathbf{v}\}$. Consequently, the regularized gap
matrix $L^\Delta_{\mathrm{reg}} = L^\Delta + \frac{\mathbf{v}\mathbf{v}^T}{\|\mathbf{v}\|^2}$
is strictly positive definite.
\end{lemma}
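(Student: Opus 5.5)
Since $L^\Delta\succeq 0$ by Proposition~\ref{prop:LDelta_psd}, a vector $\mathbf x$ lies in $\ker L^\Delta$ if and only if $\mathbf x^TL^\Delta\mathbf x=0$. Because every $\delta_{ij}>0$, the quadratic-form identity \eqref{eq:LDelta_qf_final} shows this happens exactly when all six radial projections vanish. Setting $\mathbf x_1=0$ and $\mathbf x_a=\mathbf v_a$ for $a=2,3,4$, the kernel condition becomes
\[
(\mathbf x_i-\mathbf x_j)\cdot\hat{\bq}_{ij}=0\qquad\text{for all }1\le i<j\le 4.
\]
This says that $\mathbf x$ is an infinitesimal flex of the framework formed by the complete graph $K_4$ with joints at the configuration points, with body $1$ pinned. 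The plan is therefore to show that $K_4$ is infinitesimally rigid for any non-collinear planar placement of four distinct points. The only flexes are then infinitesimal Euclidean motions. Since translations are removed by pinning $\mathbf x_1=0$, what remains is the rotation about body $1$, namely $\mathbf x_a=c\,\mathbf J\mathbf q_{1a}$, i.e.\ $\mathbf x=c\,\mathbf v$. Positive definiteness of $L^\Delta_{\mathrm{reg}}$ then follows at once. For any $\mathbf x$ we have $\mathbf x^TL^\Delta_{\mathrm{reg}}\mathbf x=\mathbf x^TL^\Delta\mathbf x+(\mathbf x\cdot\mathbf v)^2/\|\mathbf v\|^2$, and both terms are nonnegative. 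If the sum vanishes, then $\mathbf x=c\mathbf v$ and also $\mathbf x\perp\mathbf v$, which forces $\mathbf x=0$. We also need $\mathbf v\neq 0$, which holds because the configuration is non-collinear, so some $\mathbf q_{1a}\neq0$.

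For the rigidity step I would argue by elementary means rather than through rigidity-matrix rank counts. By non-collinearity, some three bodies form a nondegenerate triangle. First I would show that the triangle subframework is infinitesimally rigid. For a triangle $\{i,j,k\}$, subtract the rotation mode $c\,\mathbf J(\mathbf r_\cdot-\mathbf r_i)$ and the translation fixing $\mathbf x_i$, choosing $c$ so that the component of $\mathbf x_j-\mathbf x_i$ perpendicular to $\mathbf q_{ij}$ vanishes. Then edge $ij$ forces $\mathbf x_j-\mathbf x_i=0$. The edges $ik$ and $jk$ then force $\mathbf x_k-\mathbf x_i$ to be orthogonal to both $\hat{\bq}_{ik}$ and $\hat{\bq}_{jk}$. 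These two vectors are linearly independent because the triangle is nondegenerate, so $\mathbf x_k=\mathbf x_i$.

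Next I would attach the fourth body $l$. After the normalization, the three vectors $\mathbf x_i,\mathbf x_j,\mathbf x_k$ coincide, and the constraints give $(\mathbf x_l-\mathbf x_i)\cdot\hat{\bq}_{il}=0$ together with the analogous conditions for $j$ and $k$. Since $\mathbf r_l$ is distinct from all the other points and the three points $\mathbf r_i,\mathbf r_j,\mathbf r_k$ are not collinear, at least two of the directions $\hat{\bq}_{il},\hat{\bq}_{jl},\hat{\bq}_{kl}$ are independent. Indeed, if all three were parallel, then $\mathbf r_i,\mathbf r_j,\mathbf r_k$ would lie on one line through $\mathbf r_l$. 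Hence $\mathbf x_l=\mathbf x_i$ as well, and the whole flex is a rigid motion. Translating back via the pinning $\mathbf x_1=0$ gives $\mathbf x\in\operatorname{span}\{\mathbf v\}$.

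The main obstacle is the case analysis. The configuration must be non-collinear, but some three of the points could still be collinear, for example in a degenerate concave arrangement. So I must choose the base triangle to be nondegenerate. Such a choice is always possible for a non-collinear set of four points, and the fourth-vertex argument above needs only distinctness and non-collinearity of the base. The central configuration property and the masses enter only through $\delta_{ij}>0$. Everything else is pure geometry, which is also why the argument extends verbatim to general $N$ by attaching vertices one at a time.
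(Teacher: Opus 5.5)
Your proof is correct and follows essentially the same route as the paper. You reduce $\ker L^\Delta$ to the six edge conditions using $\delta_{ij}>0$, run an elementary nondegenerate-triangle-plus-fourth-vertex argument, and dispose of $L^\Delta_{\mathrm{reg}}$ by the same two-nonnegative-terms reasoning. The paper differs only in bookkeeping: it takes the base triangle through the pinned body $1$ (always possible, since two of $\mathbf q_{12},\mathbf q_{13},\mathbf q_{14}$ must be independent), reads off $\mathbf x_a=c_a\mathbf J\mathbf q_{1a}$ directly from the edges at body $1$, and shows the $c_a$ agree using nonvanishing determinants $\det(\mathbf q_{1a},\mathbf q_{1b})$, which avoids your subtract-a-rigid-motion normalization and the final un-pinning step. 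Also, your phrase ``$\mathbf x_a=\mathbf v_a$'' clashes with the rotational mode $\mathbf v$ and should refer to a generic test vector instead.
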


\begin{proof}

Recall from Proposition~\ref{prop:LDelta_psd} that, for $N = 4$, the
quadratic form of $L^\Delta$ evaluated on a state vector
$\mathbf{x} = (\mathbf{x}_2^T, \mathbf{x}_3^T, \mathbf{x}_4^T)^T \in
\mathbb{R}^{6}$ is given by Eq.~\eqref{eq:LDelta_qf_final}:
\[
\mathbf{x}^T L^\Delta \mathbf{x}
= \sum_{a=2}^{4} \delta_{1a}\bigl(\mathbf{x}_a\cdot\hat{\bq}_{1a}\bigr)^2
+ \sum_{2\le a<b\le 4} \delta_{ab}\bigl((\mathbf{x}_a-\mathbf{x}_b)\cdot\hat{\bq}_{ab}\bigr)^2.
\]
Here the first sum ranges over the three edges $\{12, 13, 14\}$ incident to
the grounded body $1$, and the second sum ranges over the three edges
$\{23, 24, 34\}$ among the remaining bodies --- together these six edges are
exactly those of the complete graph $K_4$. Because every mass $m_i$ is
strictly positive and the configuration is collision-free ($q_{ij} > 0$),
every radial gap coefficient $\delta_{ij} = \frac{3M\mu_{ij}}{q_{ij}^3}$ is
strictly positive.

Since $L^\Delta\succeq0$, we have $\mathbf{x}\in\ker L^\Delta$ if and
only if $\mathbf{x}^T L^\Delta \mathbf{x}=0$. By
Eq.~\eqref{eq:LDelta_qf_final}, this quadratic form is a sum of
non-negative squares with strictly positive coefficients, so it
vanishes if and only if every term vanishes. Set
$\mathbf{x}_1:=\mathbf{0}$. The list $(\mathbf x_1,\mathbf x_2,\mathbf
x_3,\mathbf x_4)$ is then just $\mathbf x$ together with the (trivial)
displacement of the grounded body $1$, allowing the six edge
conditions to be written uniformly. Thus
\[
\mathbf x\in\ker L^\Delta
\quad\Longleftrightarrow\quad
\hat{\bq}_{ij}\cdot(\mathbf{x}_i-\mathbf{x}_j)=0
\ \ \text{for all } 1\leq i<j\leq4.
\]
We now show this holds if and only if $\mathbf x=c\,\mathbf v$ for some
$c\in\mathbb R$, which will establish $\ker
L^\Delta=\operatorname{span}\{\mathbf v\}$, with $ \mathbf v = 
\mathbb{J}  \mathbf q$.

Because the configuration is non-collinear, among the three vectors
$\bq_{12},\bq_{13},\bq_{14}$ there are two linearly independent ones.
Relabeling the ungrounded bodies if necessary, suppose that
$\bq_{12}$ and $\bq_{13}$ are linearly independent.

The edge constraints give
\[
\mathbf{x}_2\cdot\bq_{12}=0,
\qquad
\mathbf{x}_3\cdot\bq_{13}=0.
\]
Since, in $\mathbb R^2$, the orthogonal complement of a nonzero vector
$\mathbf u$ is $\operatorname{span}\{\mathbf J\mathbf u\}$, there exist
$a,b\in\mathbb R$ such that
\[
\mathbf{x}_2=a\,\mathbf{J}\bq_{12},
\qquad
\mathbf{x}_3=b\,\mathbf{J}\bq_{13}.
\]
Using $\bq_{23}=\bq_{13}-\bq_{12}$, the constraint on the edge $23$ gives

\begin{align*}
0
&=
(\mathbf{x}_2-\mathbf{x}_3)\cdot\bq_{23}\\
&=
(a\mathbf{J}\bq_{12}-b\mathbf{J}\bq_{13})
\cdot(\bq_{13}-\bq_{12})\\
&=
(a-b)\det(\bq_{12},\bq_{13}),
\end{align*}
where we used
\[
(\mathbf{J}\mathbf{u})\cdot\mathbf{w}
=
\det(\mathbf{u},\mathbf{w}).
\]
Since $\bq_{12}$ and $\bq_{13}$ are linearly independent,
$\det(\bq_{12},\bq_{13})\neq0$, and hence $a=b$.

The edge $14$ constraint similarly implies that
\[
\mathbf{x}_4=d\,\mathbf{J}\bq_{14}
\]
for some $d\in\mathbb{R}$.

The edge $24$ and edge $34$ constraints similarly give
\[
(a-d)\det(\bq_{12},\bq_{14})=0,
\qquad
(b-d)\det(\bq_{13},\bq_{14})=0.
\]
Since $\bq_{12}$ and $\bq_{13}$ are linearly independent, $\bq_{14}$
cannot be parallel to both of them, so at least one of
$\det(\bq_{12},\bq_{14})$, $\det(\bq_{13},\bq_{14})$ is nonzero. Together
with $a=b$, either equation then forces $d=a$.

Therefore $ a = b = d $. Write $ c := a = b = d$ for this common value. It follows that 
\[
\mathbf{x}_a=c\,\mathbf{J}\bq_{1a},
\qquad a=2,3,4,
\]
so $\mathbf{x}=c\,\mathbb{J}\mathbf{q}=c\,\mathbf{v}$. Hence
\(
\ker L^\Delta=\operatorname{span}\{\mathbf{v}\}.
\)

\vskip 1cm 

Now, let $\widehat{\mathbf v}:=\mathbf v/\|\mathbf v\|$. Every
$\mathbf{x}\in\mathbb R^6$ has an orthogonal decomposition
\[
\mathbf{x}=\mathbf{x}_{\perp}+t\,\widehat{\mathbf v},
\qquad
\mathbf{x}_{\perp}\perp\mathbf v,\quad t\in\mathbb R.
\]
Since $L^\Delta_{\mathrm{reg}}=L^\Delta+\widehat{\mathbf v}\widehat{\mathbf v}^T$
and $L^\Delta\widehat{\mathbf v}=0$, we obtain
\[
\mathbf{x}^T L^\Delta_{\mathrm{reg}}\mathbf{x}
=
\mathbf{x}_{\perp}^T L^\Delta\mathbf{x}_{\perp}+t^2.
\]
Suppose $\mathbf{x}\neq\mathbf0$, then either $t\neq0$, or
$t=0$.  If $t\neq0$, then
\[
\mathbf{x}^T L^\Delta_{\mathrm{reg}}\mathbf{x}
=
\mathbf{x}_{\perp}^T L^\Delta\mathbf{x}_{\perp}+t^2
\geq t^2>0.
\]
If $t=0$, then $\mathbf{x}=\mathbf{x}_{\perp}\neq\mathbf0$. Since
$\mathbf{x}_{\perp}\perp\mathbf v$ and
$\ker L^\Delta=\operatorname{span}\{\mathbf v\}$, we have
$\mathbf{x}_{\perp}\notin\ker L^\Delta$. Hence
\[
\mathbf{x}_{\perp}^T L^\Delta\mathbf{x}_{\perp}>0,
\]
and therefore
\(\mathbf{x}^T L^\Delta_{\mathrm{reg}}\mathbf{x}>0.\)
Thus $L^\Delta_{\mathrm{reg}}\succ0$.

\end{proof}

By Lemma~\ref{lem:LDelta_reg_pd}, the restriction of $L^\Delta$ to
$\mathcal W$ is positive definite. Equivalently, its matrix
representation $L^\Delta_{\mathrm{reg}}$ is invertible on $\mathbb R^6$.
By Corollary~\ref{cor:ell-factorization},
\[
\widetilde L=\sigma \mathbf{WW}^T,
\qquad
\mathbf W=\mathbf w\otimes\mathbf I_2.
\]
Since $\mathbf w\neq\mathbf0$, the matrix $\mathbf W$ has full column rank $2$;
hence $\widetilde L$ has rank $2$.

Since $L^\Delta|_{\mathcal W}$ is positive definite and $\widetilde L$ has
rank $2$, adding $\widetilde L$ can change the sign of at most two
eigenvalues of $H_{\mathcal C}|_{\mathcal W}=L^\Delta|_{\mathcal
W}+\widetilde L|_{\mathcal W}$: a rank-$r$ perturbation of any symmetric
matrix alters its number of negative eigenvalues by at most $r$. The
definiteness of $H_{\mathcal C}|_{\mathcal W}$ is therefore governed
entirely by how $\widetilde L$ interacts with $L^\Delta$ on the
$2$-dimensional subspace $\operatorname{range}(\mathbf W)$, and we now
make this precise by reducing the question to the spectrum of an
explicit $2\times2$ matrix. Before carrying out this reduction, we
verify that the range of $\mathbf W$ lies entirely in the rotation-free
space $\mathcal W$.

\begin{lemma}[The range of $\mathbf W$ is rotation-free]
\label{lem:F_range_in_W}
Let $\mathbf v=\mathbb J\mathbf q$ be the infinitesimal rotational mode and
let $\mathbf W=\mathbf w\otimes\mathbf I_2$, where $\mathbf w$ is as in
Corollary~\ref{cor:ell-factorization}. Then
\[
\mathbf v^T \mathbf W=\mathbf0.
\]
Consequently,
\[
\operatorname{range}(\mathbf W)\subseteq\mathbf v^\perp=\mathcal W.
\]
\end{lemma}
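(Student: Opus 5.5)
The plan is to compute $\mathbf v^T\mathbf W$ directly from the block structure and reduce it to the pair-dependency relation \eqref{eq:pair-dependency} of Corollary~\ref{cor:ell-factorization}. Since $\mathbf W=\mathbf w\otimes\mathbf I_2$ is the $6\times2$ block column with blocks $w_1\mathbf I_2,w_2\mathbf I_2,w_3\mathbf I_2$, and $\mathbf v=(\mathbf J\mathbf q_{12},\mathbf J\mathbf q_{13},\mathbf J\mathbf q_{14})$, block multiplication gives
\[
\mathbf v^T\mathbf W
=\sum_{a=2}^{4} w_{a-1}(\mathbf J\mathbf q_{1a})^T
=\Bigl(\mathbf J\sum_{a=2}^{4} w_{a-1}\mathbf q_{1a}\Bigr)^T .
\]
Here linearity of $\mathbf J$ lets us pull it outside the sum.

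The inner sum is exactly $\frac{1}{\alpha_{34}^\perp}\mathbf q_{12}+\frac{1}{\alpha_{24}^\perp}\mathbf q_{13}+\frac{1}{\alpha_{23}^\perp}\mathbf q_{14}$. By \eqref{eq:pair-dependency} this vanishes, so $\mathbf v^T\mathbf W=\mathbf 0$. Every vector $\mathbf W\mathbf c$ then satisfies $\mathbf v\cdot\mathbf W\mathbf c=(\mathbf v^T\mathbf W)\mathbf c=0$, which gives $\operatorname{range}(\mathbf W)\subseteq\mathcal W$.

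An equivalent way to phrase the middle step avoids coordinates. Write $\mathbf v=\mathbb J\mathbf q$ and note $\mathbb J^T=-\mathbb J$. The mixed-product rule gives $\mathbb J(\mathbf w\otimes\mathbf I_2)=\mathbf w\otimes\mathbf J=(\mathbf w\otimes\mathbf I_2)\mathbf J$. Hence $\mathbf v^T\mathbf W=-\mathbf q^T\mathbf W\mathbf J$, and
\[
\mathbf q^T\mathbf W=(\mathbf w^Tx,\ \mathbf w^Ty)=(0,0)
\]
by the identities $\mathbf w^Tx=\mathbf w^Ty=0$ established in the proof of Corollary~\ref{cor:ell-factorization}.

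The only real obstacle is a hypothesis issue. The factorization, and therefore $\mathbf w$, requires $\alpha_{23}^\perp,\alpha_{24}^\perp,\alpha_{34}^\perp\neq0$, so the lemma should be read under the standing assumptions of Corollary~\ref{cor:ell-factorization}. If one wants to avoid this, there is a more robust route. Take $\mathbf w$ to be any spanning vector of $\operatorname{range}(\ell)=(\ker\ell)^\perp$. Since $x,y\in\ker\ell$ by Proposition~\ref{prop:rank1}, we again get $\mathbf w^Tx=\mathbf w^Ty=0$, and the same computation goes through.
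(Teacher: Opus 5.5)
Your proof is correct and matches the paper's: expand $\mathbf v^T\mathbf W$ blockwise, pull $\mathbf J$ out by linearity, and invoke the pair-dependency relation \eqref{eq:pair-dependency} from Corollary~\ref{cor:ell-factorization}. Your remark that the lemma tacitly inherits that corollary's hypothesis $\alpha_{23}^\perp,\alpha_{24}^\perp,\alpha_{34}^\perp\neq0$ is accurate, and your alternative of taking any spanning vector of $\operatorname{range}(\ell)=(\ker\ell)^\perp$ is sound.
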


\begin{proof}
For $\mathbf c\in\mathbb R^2$, the three $2$-vector blocks of
$\mathbf W\mathbf c$ are $w_1\mathbf c$, $w_2\mathbf c$, and $w_3\mathbf c$.
Therefore,
\[
\begin{aligned}
	\mathbf v^T \mathbf W\mathbf c & =(\mathbf J\bq_{12} ^T )(w_{1}\mathbf c) +(\mathbf J\bq_{13})(w_{2}\mathbf c) +(\mathbf J\bq_{14})(w_{3}\mathbf c) \\
&=
\mathbf J\left(
w_1\bq_{12}+w_2\bq_{13}+w_3\bq_{14}
\right) ^T \mathbf c.
\end{aligned}
\]
By Corollary~\ref{cor:ell-factorization},
\[
w_1\bq_{12}+w_2\bq_{13}+w_3\bq_{14}=\mathbf0.
\]
Hence $\mathbf v^T\mathbf W\mathbf c=0$ for every $\mathbf c\in\mathbb R^2$.
Therefore $\mathbf v^T\mathbf W=\mathbf0$. Equivalently, every vector of the form
$\mathbf W\mathbf c$ is orthogonal to $\mathbf v$, so
\[
\operatorname{range}(\mathbf W)\subseteq\mathbf v^\perp
=\mathcal W.
\]
\end{proof}

For any $\mathbf x\in\mathbb R^6$,
\[
\widetilde L\mathbf x=\sigma\mathbf W\mathbf W^T\mathbf x=\sigma\mathbf
W\mathbf c,\qquad \mathbf c:=\mathbf W^T\mathbf x\in\mathbb R^2,
\]
so $\widetilde L\mathbf x$ always lies in $\operatorname{range}(\mathbf
W)$. By Lemma~\ref{lem:F_range_in_W}, $\operatorname{range}(\mathbf
W)\subseteq\mathcal W$, and hence $\operatorname{range}(\widetilde
L)\subseteq\mathcal W$. Moreover, since $\mathbf W$ has full column rank $2$ and $\sigma\neq0$ (by
hypothesis of Corollary~\ref{cor:ell-factorization}),
$\operatorname{range}(\widetilde L)=\operatorname{range}(\mathbf W)$, that is, $\operatorname{range}(\widetilde L)$ is  a
$2$-dimensional subspace of the five-dimensional rotation-free space
$\mathcal W$. The following proposition makes this observation precise:
it reduces the definiteness of $H_{\mathcal C}|_{\mathcal W}$ to the
spectrum of an explicit $2\times2$ matrix.

\begin{proposition}[Coordinate-free reduction of the generalized spectrum]
\label{prop:coordinate-free-reduction}
Let $\mathbf{v} = \mathbb{J} \mathbf{q}$ denote the rotational mode. Suppose
that $\mathbf W = \mathbf{w} \otimes \mathbf{I}_2 \in \mathbb{R}^{6 \times 2}$ and
$\widetilde{L} = \sigma \mathbf W \mathbf W^T$, where $\mathbf{w} \in \mathbb{R}^3$ is the
eigenvector from Corollary~\ref{cor:ell-factorization} and $\sigma \neq 
0  $ is the
corresponding eigenvalue. Then the nonzero \emph{generalized eigenvalues} of
the pair $(\widetilde{L}|_{\mathcal{W}}, L^\Delta|_{\mathcal{W}})$ coincide
with the eigenvalues of the $2 \times 2$ matrix
\[
\Omega := \sigma \mathbf W^T (L^\Delta_{\mathrm{reg}})^{-1} \mathbf W.
\]
In particular,
\[
	H_{\mathcal{C}}|_{\mathcal{W}} = L^\Delta|_{\mathcal{W}} + \widetilde{L}|_{\mathcal{W}}
\]
is positive definite if and only if both eigenvalues of $\Omega$ are
greater than $-1$.
\end{proposition}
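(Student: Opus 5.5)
We will prove the statement in three steps: reduce to an invertible generalized eigenvalue problem, convert it to the eigenvalue problem for $\Omega$, and read off positive definiteness.

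\textbf{Step 1: working on $\mathcal W$.} By Lemma~\ref{lem:LDelta_reg_pd}, $L^\Delta|_{\mathcal W}$ is positive definite. By Lemma~\ref{lem:F_range_in_W}, $\operatorname{range}(\widetilde L)=\operatorname{range}(\mathbf W)\subseteq\mathcal W$, and $\widetilde L\mathbf v=\sigma\mathbf W(\mathbf W^T\mathbf v)=0$. Both $L^\Delta$ and $\widetilde L$ therefore map $\mathcal W$ into itself and annihilate $\mathbf v$. Since $\mathcal W=\mathbf v^\perp$ and $L^\Delta_{\mathrm{reg}}\mathbf v=\mathbf v$, the map $L^\Delta_{\mathrm{reg}}$ preserves both $\mathcal W$ and $\operatorname{span}\{\mathbf v\}$. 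On $\mathcal W$ it agrees with $L^\Delta$, so $(L^\Delta_{\mathrm{reg}})^{-1}$ restricted to $\mathcal W$ is $(L^\Delta|_{\mathcal W})^{-1}$.

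The generalized eigenproblem $\widetilde L\mathbf x=\lambda L^\Delta\mathbf x$, $\mathbf x\in\mathcal W$, is thus equivalent to the ordinary eigenproblem for
\[
T:=(L^\Delta|_{\mathcal W})^{-1}\widetilde L|_{\mathcal W}=(L^\Delta_{\mathrm{reg}})^{-1}\sigma\mathbf W\mathbf W^T\big|_{\mathcal W}.
\]

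\textbf{Step 2: the nonzero spectrum of $T$ is the spectrum of $\Omega$.} Write $T=PQ$ with $P=(L^\Delta_{\mathrm{reg}})^{-1}\mathbf W\colon\mathbb R^2\to\mathbb R^6$ and $Q=\sigma\mathbf W^T$. Then $QP=\Omega$, and the standard fact that $PQ$ and $QP$ share their nonzero eigenvalues with multiplicities applies. Concretely: if $T\mathbf x=\lambda\mathbf x$ with $\lambda\neq0$, then $\mathbf c=\mathbf W^T\mathbf x\neq0$ and $\Omega\mathbf c=\lambda\mathbf c$. Conversely, if $\Omega\mathbf c=\lambda\mathbf c$ with $\lambda\ne0$, then $\mathbf x=(L^\Delta_{\mathrm{reg}})^{-1}\mathbf W\mathbf c$ satisfies $T\mathbf x=\lambda\mathbf x$.

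One point needs checking: this $\mathbf x$ must lie in $\mathcal W$. It does, because $\mathbf W\mathbf c\in\mathcal W$ and $(L^\Delta_{\mathrm{reg}})^{-1}$ preserves $\mathcal W$. Matching multiplicities follows because $\Omega$ is symmetric, hence diagonalizable, and $T$ is self-adjoint in the $L^\Delta$ inner product.

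\textbf{Step 3: positive definiteness.} Let $G=L^\Delta|_{\mathcal W}\succ0$ and conjugate by $G^{-1/2}$. Then $H_{\mathcal C}|_{\mathcal W}$ is congruent to $I+G^{-1/2}\widetilde L G^{-1/2}$. The eigenvalues of $G^{-1/2}\widetilde L G^{-1/2}$ are those of $T$: three zeros together with the eigenvalues $\lambda_1,\lambda_2$ of $\Omega$. Here we use that $\operatorname{rank}\widetilde L|_{\mathcal W}=2$, so zero eigenvalues of $\Omega$, if any, simply join the zero block.

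By Sylvester's law of inertia, $H_{\mathcal C}|_{\mathcal W}\succ0$ if and only if $1+\lambda_i>0$ for $i=1,2$. The same argument shows that degeneracy occurs exactly when some $\lambda_i=-1$.

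The main obstacle is the bookkeeping in Step 1. The inverse $(L^\Delta_{\mathrm{reg}})^{-1}$ is taken on all of $\mathbb R^6$, whereas the eigenproblem lives on $\mathcal W$, so everything rests on showing that the regularization does not contaminate $\mathcal W$. This follows from the invariance of $\mathcal W$ under $L^\Delta_{\mathrm{reg}}$ together with $\mathbf v^T\mathbf W=0$.
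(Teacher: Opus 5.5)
Your proof is correct and follows essentially the same route as the paper. It uses the same invariance argument to get $(L^\Delta_{\mathrm{reg}})^{-1}|_{\mathcal W}=(L^\Delta|_{\mathcal W})^{-1}$, and the same pair of maps $\mathbf x\mapsto\mathbf W^T\mathbf x$ and $\mathbf c\mapsto(L^\Delta_{\mathrm{reg}})^{-1}\mathbf W\mathbf c$, which you package as the $PQ$/$QP$ nonzero-spectrum fact. The final congruence to $\mathbf I+(\cdot)$ is also the same, with $(L^\Delta|_{\mathcal W})^{-1/2}$ in place of the paper's factor $R$ satisfying $L=R^TR$.
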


\begin{proof}
Write $L := L^\Delta|_{\mathcal W}$ for the restriction of $L^\Delta$ to
$\mathcal W$, which is positive definite by Lemma~\ref{lem:LDelta_reg_pd}.
Since $\mathbf v \in \ker L^\Delta$ and $\mathbf v \perp \mathcal W$ by
definition of $\mathcal W$, the rank-one correction
$\mathbf v \mathbf v^T / \|\mathbf v\|^2$ vanishes on $\mathcal W$ and acts
as the identity on $\operatorname{span}\{\mathbf v\}$. Hence
$L^\Delta_{\mathrm{reg}}$ preserves the orthogonal splitting
$\mathbb R^6 = \mathcal W \oplus \operatorname{span}\{\mathbf v\}$, acting as
$L$ on $\mathcal W$ and as the identity on $\operatorname{span}\{\mathbf
v\}$. Consequently $(L^\Delta_{\mathrm{reg}})^{-1}$ also preserves this
splitting and restricts to $L^{-1}$ on $\mathcal W$:
\begin{equation}\label{eq:Lreg_inv_restriction}
(L^\Delta_{\mathrm{reg}})^{-1}\big|_{\mathcal W} = L^{-1}.
\end{equation}

\smallskip
\noindent\textbf{Step 1: Correspondence of nonzero eigenvalues.}
Consider the generalized eigenvalue problem for the pair $ (\widetilde{L}|_{ \mathcal{W} } ,  L) $, that is,  
find $ \lambda \neq 0 $ an $ \mathbf{x} \in \mathcal{W} \setminus \{{\bf 0}\}$ such that 
\[
\sigma \mathbf W\mathbf W^T\mathbf x=\lambda L\mathbf x.
\]

\smallskip
\noindent\emph{(Generalized eigenvalue \(\Rightarrow\) eigenvalue of
\(\Omega\).)}
Set
\[
\mathbf c:=\mathbf W^T\mathbf x.
\]
If \(\mathbf c=\mathbf0\), since $ \lambda \neq 0 $ the generalized eigenvalue equation gives
\(L\mathbf x=\mathbf0\). Since \(L\) is invertible, this  gives $ \mathbf{x} = \mathbf{0} $ contradicting 
\(\mathbf x\neq\mathbf0\). Hence \(\mathbf c\neq\mathbf0\). Since $ L $ is invertible solving for
\(\mathbf x\) gives
\[
\mathbf x=\frac{\sigma}{\lambda}L^{-1}\mathbf W\mathbf c.
\]
Applying \(\mathbf W^T\)  to both sides of the above equation and using \eqref{eq:Lreg_inv_restriction}, together with the fact that  $ \mathbf{W} \mathbf{c}  \in \operatorname{range}( \mathbf{W}) \subseteq \mathcal{W} $ by Lemma \ref{lem:F_range_in_W},  we obtain
\[
\mathbf c
=
\frac{\sigma}{\lambda}\mathbf W^TL^{-1}\mathbf W\mathbf c
=
\frac{1}{\lambda}
\sigma \mathbf W^T(L^\Delta_{\mathrm{reg}})^{-1}\mathbf W\mathbf c
=
\frac{1}{\lambda}\Omega\mathbf c.
\]
Thus
\[
\Omega\mathbf c=\lambda\mathbf c,
\]
so \(\lambda\) is an eigenvalue of \(\Omega\).

\smallskip
\noindent\emph{(Eigenvalue of \(\Omega\) \(\Rightarrow\) generalized
eigenvalue.)}
Conversely, let
\[
\Omega\mathbf c=\lambda\mathbf c,
\qquad
\lambda\neq0,
\qquad
\mathbf c\neq\mathbf0.
\]
Define
\[
\mathbf x:=\frac{\sigma}{\lambda}L^{-1}\mathbf W\mathbf c.
\]
Since \(\mathbf W\mathbf c\in\operatorname{range}(\mathbf
W)\subseteq\mathcal W\) by Lemma~\ref{lem:F_range_in_W}, and
\((L^\Delta_{\mathrm{reg}})^{-1}\big|_{\mathcal W} = L^{-1}\) by
\eqref{eq:Lreg_inv_restriction}, we have \(\mathbf x\in\mathcal W\).

We first check that applying \(\mathbf W^T\) to \(\mathbf x\) simply
returns \(\mathbf c\). Substituting the definition of \(\mathbf x\),
\[
\mathbf W^T\mathbf x
=
\mathbf W^T\left(\frac{\sigma}{\lambda}L^{-1}\mathbf W\mathbf c\right)
=
\frac{1}{\lambda}\bigl(\sigma\,\mathbf W^TL^{-1}\mathbf W\bigr)\mathbf c
=
\frac{1}{\lambda}\Omega\mathbf c
=
\frac{1}{\lambda}(\lambda\mathbf c)
=
\mathbf c.
\]

Separately, apply \(L\) directly to the definition of \(\mathbf x\):
\[
L\mathbf x
=
L\left(\frac{\sigma}{\lambda}L^{-1}\mathbf W\mathbf c\right)
=
\frac{\sigma}{\lambda}(LL^{-1})\mathbf W\mathbf c
=
\frac{\sigma}{\lambda}\mathbf W\mathbf c,
\]
so that
\[
\lambda L\mathbf x=\sigma\mathbf W\mathbf c.
\]
Then  substitute \(\mathbf c=\mathbf W^T\mathbf x\) (established
above) into the right-hand side to rewrite it purely in terms of
\(\mathbf x\):
\[
\lambda L\mathbf x
=
\sigma \mathbf W\mathbf c
=
\sigma \mathbf W\mathbf W^T\mathbf x,
\]
which is exactly the generalized eigenvalue equation \(\sigma\mathbf
W\mathbf W^T\mathbf x=\lambda L\mathbf x\); hence \(\mathbf x\) solves
it.


It remains to check that $\mathbf x\neq\mathbf 0$. Since $\mathbf c\neq\mathbf 0$ and
$\mathbf W$ has full column rank, $\mathbf W\mathbf c\neq\mathbf 0$; since
$L^{-1}$ is invertible and $\sigma/\lambda\neq0$, this gives
$\mathbf x=(\sigma/\lambda)L^{-1}\mathbf W\mathbf c\neq\mathbf 0$.

The two maps $\mathbf x\mapsto\mathbf c:=\mathbf W^T\mathbf x$
and $\mathbf c\mapsto\mathbf x:=(\sigma/\lambda)L^{-1}\mathbf W\mathbf c$
constructed above are mutually inverse.
Therefore, for each $\lambda\neq0$ these maps give a
linear isomorphism between the $\lambda$-eigenspace of $\Omega$ and the
$\lambda$-generalized-eigenspace of the pencil $(\widetilde L|_{\mathcal
W},L)$, so the two implications above establish a bijection, respecting
multiplicity, between the nonzero eigenvalues of $\Omega$ and the
nonzero generalized eigenvalues of $(\widetilde L|_{\mathcal W},L)$,
proving the first assertion.

\smallskip
\noindent\textbf{Step 2: The positive-definiteness criterion.}
Since \(L\) is positive definite, there is an
invertible matrix \(R\) such that
\[
L=R^TR.
\]
For \(\mathbf x\in\mathcal W\), set \(\mathbf y=R\mathbf x\). Then
\[
\mathbf x^T
\bigl(L+\widetilde L|_{\mathcal W}\bigr)
\mathbf x
=
\mathbf y^T
\bigl(\mathbf I+R^{-T}\,\widetilde L|_{\mathcal W}\, R^{-1}\bigr)
\mathbf y.
\]
Therefore \(H_{\mathcal C}|_{\mathcal W}\) is positive definite if and only
if every eigenvalue of the symmetric matrix
\[
R^{-T}\widetilde L|_{\mathcal W}\,  R^{-1}
\]
is strictly greater than \(-1\). Its eigenvalues are precisely the
generalized eigenvalues of \((\widetilde L|_{\mathcal W},L)\), because
\[
R^{-T}\widetilde L|_{\mathcal W}\,  R^{-1}\mathbf y=\lambda\mathbf y
\quad\Longleftrightarrow\quad \widetilde L|_{\mathcal W}\, \mathbf x=\lambda L\mathbf x,
\qquad
\mathbf x=R^{-1}\mathbf y.
\]
Since \(\widetilde L|_{\mathcal W}\) has rank two, exactly
three generalized eigenvalues of $(\widetilde L|_{\mathcal W},L)$ are zero. By Step~1, its two
nonzero generalized eigenvalues are the two eigenvalues of $\Omega$. 
Hence
\[
H_{\mathcal C}|_{\mathcal W}\succ0
\]
if and only if both eigenvalues of \(\Omega\) are strictly greater than
\(-1\).
\end{proof}

\subsection{Inverse-Free Algebraic Reduction}

While Proposition~\ref{prop:coordinate-free-reduction} gives a clean
theoretical approach, its explicit use requires computing
$(L^\Delta_{\mathrm{reg}})^{-1}$ symbolically. For general edge lengths,
every entry of this inverse is, by Cramer's rule, a ratio of a $5\times5$
minor to the full $6\times6$ determinant, and the resulting expressions
typically remain cumbersome even for structured families such as kites or
isosceles trapezoids. The following proposition avoids this inversion
entirely, replacing it with the generalized characteristic polynomial
\[
P(\lambda):=\det\bigl(\lambda L^\Delta_{\mathrm{reg}}-\widetilde
L\bigr).
\]
Since $P(\lambda)=\lambda^4\bigl(c_6\lambda^2+c_5\lambda+c_4\bigr)$ has
only three unknown coefficients, these can be recovered directly from a
handful of determinant evaluations --- for instance by evaluating
$P(\lambda)$ at three values of $\lambda$, or by reading off the
coefficients of $\lambda^4,\lambda^5,\lambda^6$ in its expansion ---
without ever forming a matrix inverse. This is symbolically  cheaper
than inverting a  $6\times6$ matrix.

\begin{proposition}[Inverse-free generalized characteristic polynomial]
\label{prop:inverse-free-charpoly}
With the notation of Proposition~\ref{prop:coordinate-free-reduction}, let
\[
P(\lambda)
:=
\det\bigl(\lambda L^\Delta_{\mathrm{reg}}-\widetilde L\bigr).
\]
Then
\[
P(\lambda)
=
\lambda^4\bigl(c_6\lambda^2+c_5\lambda+c_4\bigr),
\qquad
c_6=\det(L^\Delta_{\mathrm{reg}})>0.
\]
Moreover, the two roots of
\[
c_6\lambda^2+c_5\lambda+c_4=0
\]
are precisely the nonzero generalized eigenvalues of
\[
(\widetilde L|_{\mathcal W},\,L^\Delta|_{\mathcal W}).
\]
Consequently, $H_{\mathcal C}|_{\mathcal W}$ is positive definite if and
only if both roots are strictly greater than $-1$.
\end{proposition}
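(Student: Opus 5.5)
The plan is to factor $P(\lambda)$ with the Schur-complement (matrix determinant lemma) identity and then identify the quadratic factor with the characteristic polynomial of $\Omega$. Write $\widetilde L=\sigma\mathbf W\mathbf W^T$ (Corollary~\ref{cor:ell-factorization}). By Lemma~\ref{lem:LDelta_reg_pd}, $A:=L^\Delta_{\mathrm{reg}}\succ0$, so $c_6:=\det A>0$. For $\lambda\neq0$,
\[
P(\lambda)=\det(\lambda A-\sigma\mathbf W\mathbf W^T)
=\lambda^6\det(A)\,\det\!\Bigl(\mathbf I_6-\tfrac{\sigma}{\lambda}A^{-1}\mathbf W\mathbf W^T\Bigr).
\]
Using $\det(\mathbf I_6-XY)=\det(\mathbf I_2-YX)$ with $X=A^{-1}\mathbf W$ and $Y=\mathbf W^T$, the last factor becomes $\det(\mathbf I_2-\lambda^{-1}\Omega)$, where $\Omega=\sigma\mathbf W^TA^{-1}\mathbf W$. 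Hence
\[
P(\lambda)=\det(A)\,\lambda^4\det(\lambda\mathbf I_2-\Omega)
=\lambda^4\bigl(c_6\lambda^2+c_5\lambda+c_4\bigr),
\]
with $c_5=-c_6\operatorname{tr}\Omega$ and $c_4=c_6\det\Omega$. Both sides are polynomials in $\lambda$ that agree for every $\lambda\neq0$, so the identity holds for all $\lambda$. This proves the asserted form of $P$ and shows that the roots of the quadratic factor are exactly the eigenvalues of $\Omega$, counted with multiplicity.

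Next, Proposition~\ref{prop:coordinate-free-reduction} identifies the nonzero eigenvalues of $\Omega$ with the nonzero generalized eigenvalues of $(\widetilde L|_{\mathcal W},L^\Delta|_{\mathcal W})$. Therefore every nonzero root of the quadratic is such a generalized eigenvalue, and conversely.

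The delicate point, and the main obstacle, is the case where the quadratic has a zero root, that is, $\det\Omega=0$. Then "precisely the nonzero generalized eigenvalues" would fail to account for that root. I would rule this out by showing $\Omega$ is nonsingular. Since $A\succ0$, the matrix $\mathbf W^TA^{-1}\mathbf W$ is positive definite: $\mathbf W$ has full column rank 2, as noted before Lemma~\ref{lem:F_range_in_W}. Together with $\sigma\neq0$, this makes $\Omega$ a nonzero multiple of a positive definite matrix, hence invertible, so $c_4\neq0$. In fact both roots have the sign of $\sigma$.

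Finally, positive definiteness of $H_{\mathcal C}|_{\mathcal W}$ follows from Step~2 of the proof of Proposition~\ref{prop:coordinate-free-reduction}. There, positive definiteness is equivalent to all generalized eigenvalues of the pencil exceeding $-1$. The three zero generalized eigenvalues satisfy this automatically, so the condition reduces to both roots of the quadratic being strictly greater than $-1$.
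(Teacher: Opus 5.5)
Your proof is correct and follows the paper's own argument: you factor $P(\lambda)$ via Sylvester's determinant identity to get $P(\lambda)=c_6\lambda^4\det(\lambda\mathbf I_2-\Omega)$, then invoke Proposition~\ref{prop:coordinate-free-reduction} both to identify the roots and for the positivity criterion. You also add two refinements that the paper passes over when it calls these roots only ``potentially nonzero'': you extend the identity to $\lambda=0$ by polynomial agreement, and you show that $\Omega=\sigma\,\mathbf W^T(L^\Delta_{\mathrm{reg}})^{-1}\mathbf W$ is a nonzero multiple of a positive definite matrix, so both roots are genuinely nonzero and have the sign of $\sigma$.
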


\begin{proof}
By Lemma~\ref{lem:LDelta_reg_pd}, $L^\Delta_{\mathrm{reg}}$ is positive
definite; in particular it is invertible, and
\[
c_6:=\det(L^\Delta_{\mathrm{reg}})>0.
\]
Recall Sylvester's determinant identity: for
$A\in\mathbb R^{m\times n}$ and $B\in\mathbb R^{n\times m}$,
\[
	\det(\mathbf{I}_m-AB)=\det(\mathbf{I}_n-BA).
\]
Writing $\widetilde L=\sigma \mathbf W\mathbf W^T$ with $\mathbf W\in\mathbb R^{6\times2}$, and
factoring
\[
\lambda L^\Delta_{\mathrm{reg}}-\widetilde L
=
\lambda L^\Delta_{\mathrm{reg}}
\left(
\mathbf{I}_6-\frac{\sigma}{\lambda}(L^\Delta_{\mathrm{reg}})^{-1}\mathbf W\mathbf W^T
\right),
\]
we apply Sylvester's identity with
$A=\dfrac{\sigma}{\lambda}(L^\Delta_{\mathrm{reg}})^{-1}\mathbf W\in\mathbb R^{6\times2}$
and $B=\mathbf W^T\in\mathbb R^{2\times6}$:
\begin{align*}
P(\lambda)& = \det(\lambda L^\Delta_{\mathrm{reg}}-\widetilde L)\\
&=
\det(\lambda L^\Delta_{\mathrm{reg}})
\det\!\left(
\mathbf{I}_6-\frac{\sigma}{\lambda}
(L^\Delta_{\mathrm{reg}})^{-1}\mathbf W\mathbf W^T
\right) \\
&=
\lambda^6\det(L^\Delta_{\mathrm{reg}})
\det\!\left(
\mathbf{I}_2-\frac{\sigma}{\lambda}
\mathbf W^T(L^\Delta_{\mathrm{reg}})^{-1}\mathbf W
\right) \\
&=
\lambda^4\det(L^\Delta_{\mathrm{reg}})
\det\!\left(
\lambda \mathbf{I}_2-\sigma
\mathbf W^T(L^\Delta_{\mathrm{reg}})^{-1}\mathbf W
\right) \\
&=
\lambda^4 c_6\det(\lambda \mathbf{I}_2-\Omega).
\end{align*}
Since $\det(\lambda \mathbf{I}_2-\Omega)$ is a monic quadratic in $\lambda$, this
proves
\[
P(\lambda)=\lambda^4\bigl(c_6\lambda^2+c_5\lambda+c_4\bigr),
\]
and the roots of $c_6\lambda^2+c_5\lambda+c_4$ are exactly the eigenvalues
of $\Omega$. By Proposition~\ref{prop:coordinate-free-reduction}, these
coincide with the two potentially  nonzero generalized eigenvalues of
$(\widetilde L|_{\mathcal W},L^\Delta|_{\mathcal W})$, and the
positive-definiteness criterion for $H_{\mathcal C}|_{\mathcal W}$ follows
directly from that proposition.
\end{proof}

\begin{example}[The Newtonian square via the inverse-free polynomial]
\label{ex:square-inverse-free}
We apply Proposition~\ref{prop:inverse-free-charpoly} to the planar
equal-mass Newtonian square. Normalize its side length to $1$, place its
center of mass at the origin, and label the vertices counterclockwise by
\[
\mathbf r_1=
\left(-\frac12,-\frac12\right)^T,
\qquad
\mathbf r_2=
\left(\frac12,-\frac12\right)^T,
\qquad
\mathbf r_3=
\left(\frac12,\frac12\right)^T,
\qquad
\mathbf r_4=
\left(-\frac12,\frac12\right)^T.
\]
The four side lengths are
equal to $1$, and the two diagonal lengths are equal to $\sqrt2$.

Taking body $1$ as the reference point, the pair vectors
$\mathbf q_{ij}=\mathbf r_i-\mathbf r_j$ are
\begin{align*} 
& \mathbf q_{12}=\mathbf r_1-\mathbf r_2=(-1,0)^T,
\qquad
\mathbf q_{13}=\mathbf r_1-\mathbf r_3=(-1,-1)^T,
\qquad
\mathbf q_{14}=\mathbf r_1-\mathbf r_4=(0,-1)^T,\\
& \mathbf q_{23}=\mathbf r_2-\mathbf r_3=(0,-1)^T,
\qquad
\mathbf q_{24}=\mathbf r_2-\mathbf r_4=(1,-1)^T,
\qquad
\mathbf q_{34}=\mathbf r_3-\mathbf r_4=(1,0)^T.
\end{align*} 
In particular,
\[
\mathbf q
=
(\mathbf q_{12}^T,\mathbf q_{13}^T,\mathbf q_{14}^T)^T
=
(-1,0,-1,-1,0,-1)^T.
\]

Let
\[
\mathbf J=
\begin{pmatrix}
0&-1\\
1&0
\end{pmatrix},
\qquad
\mathbb J=\mathbf I_3\otimes\mathbf J.
\]
The infinitesimal rotational mode is
\[
\mathbf v
=
\mathbb J\mathbf q
=
(0,-1,1,-1,1,0)^T,
\qquad
\|\mathbf v\|^2=4.
\]

$L^\Delta$ is the reduced (grounded) weighted graph Laplacian on the
position variables of the free bodies $\{2,3,4\}$, with edge weight
$\delta_{ij}=3m_im_j/q_{ij}^3$ and edge direction $\hat{\bq}_{ij}$
assigned to each pair $(i,j)$. Each diagonal block $(i,i)$ is the
\emph{sum} of the rank-one terms
$\delta_{ij}\hat{\bq}_{ij}\hat{\bq}_{ij}^T$ over every pair
$(i,j)$ incident to body $i$ (including pairs with the grounded body
$1$), while each off-diagonal block $(i,j)$ with $i,j$ both free equals
$-\delta_{ij}\hat{\bq}_{ij}\hat{\bq}_{ij}^T$, coming from
the single pair $(i,j)$. For the square, $\delta_{ij}=3$ on the four
sides and $\delta_{ij}=\tfrac{3\sqrt2}4$ on the two diagonals. This
gives
\[
L^\Delta=
\begin{pmatrix}
3+\tfrac{3\sqrt2}{8} & -\tfrac{3\sqrt2}{8} & 0 & 0 & -\tfrac{3\sqrt2}{8} & \tfrac{3\sqrt2}{8}\\
-\tfrac{3\sqrt2}{8} & 3+\tfrac{3\sqrt2}{8} & 0 & -3 & \tfrac{3\sqrt2}{8} & -\tfrac{3\sqrt2}{8}\\
0 & 0 & 3+\tfrac{3\sqrt2}{8} & \tfrac{3\sqrt2}{8} & -3 & 0\\
0 & -3 & \tfrac{3\sqrt2}{8} & 3+\tfrac{3\sqrt2}{8} & 0 & 0\\
-\tfrac{3\sqrt2}{8} & \tfrac{3\sqrt2}{8} & -3 & 0 & 3+\tfrac{3\sqrt2}{8} & -\tfrac{3\sqrt2}{8}\\
\tfrac{3\sqrt2}{8} & -\tfrac{3\sqrt2}{8} & 0 & 0 & -\tfrac{3\sqrt2}{8} & 3+\tfrac{3\sqrt2}{8}
\end{pmatrix},
\]
and one checks directly that $\mathbf v^TL^\Delta\mathbf v=0$, confirming
$\mathbf v\in\ker L^\Delta$.

Adding back the rotational mode,
\[
L^\Delta_{\mathrm{reg}}
=
L^\Delta+\frac{\mathbf v\mathbf v^T}{\|\mathbf v\|^2}
=
\begin{pmatrix}
3+\tfrac{3\sqrt2}{8} & -\tfrac{3\sqrt2}{8} & 0 & 0 & -\tfrac{3\sqrt2}{8} & \tfrac{3\sqrt2}{8}\\
-\tfrac{3\sqrt2}{8} & \tfrac{13}{4}+\tfrac{3\sqrt2}{8} & -\tfrac14 & -\tfrac{11}{4} & -\tfrac14+\tfrac{3\sqrt2}{8} & -\tfrac{3\sqrt2}{8}\\
0 & -\tfrac14 & \tfrac{13}{4}+\tfrac{3\sqrt2}{8} & -\tfrac14+\tfrac{3\sqrt2}{8} & -\tfrac{11}{4} & 0\\
0 & -\tfrac{11}{4} & -\tfrac14+\tfrac{3\sqrt2}{8} & \tfrac{13}{4}+\tfrac{3\sqrt2}{8} & -\tfrac14 & 0\\
-\tfrac{3\sqrt2}{8} & -\tfrac14+\tfrac{3\sqrt2}{8} & -\tfrac{11}{4} & -\tfrac14 & \tfrac{13}{4}+\tfrac{3\sqrt2}{8} & -\tfrac{3\sqrt2}{8}\\
\tfrac{3\sqrt2}{8} & -\tfrac{3\sqrt2}{8} & 0 & 0 & -\tfrac{3\sqrt2}{8} & 3+\tfrac{3\sqrt2}{8}
\end{pmatrix},
\]
with $L^\Delta_{\mathrm{reg}}\mathbf v=\mathbf v$ and
$\det(L^\Delta_{\mathrm{reg}})=\tfrac{243}{2}(1+2\sqrt2)$.

For the square, Corollary~\ref{cor:ell-factorization} gives
$\widetilde L=\sigma_\square\mathbf W\mathbf W^T$ with
\[
\mathbf w
=
-\frac{4(4+\sqrt2)}{7}(1,-1,1)^T,
\qquad
\sigma_\square
=
\frac{25\sqrt2-44}{256}.
\]
Since the factorization $\widetilde L=\sigma\mathbf w\mathbf w^T$ is
unique only up to the rescaling $\mathbf w\mapsto c\mathbf w,\
\sigma\mapsto\sigma/c^2$ (chosen so that $\sigma\mathbf w\mathbf w^T$,
and hence $\widetilde L$, is unchanged), it is convenient to instead
record the rescaled pair (with $c=-\tfrac{4(4+\sqrt2)}{7}$)
\[
\mathbf w=(1,-1,1)^T,
\qquad
\sigma_\square=-\frac12+\frac{\sqrt2}{8},
\qquad
\mathbf W=\mathbf w\otimes\mathbf I_2,
\]
which yields the identical matrix $\widetilde L$ and is the version we
use below; by construction, $\operatorname{range}(\mathbf W)\subseteq
\mathcal W=\mathbf v^\perp$, as required by
Proposition~\ref{prop:coordinate-free-reduction}. Explicitly,
\[
\widetilde L=\left(-\frac12+\frac{\sqrt2}{8}\right)
\begin{pmatrix}
1&0&-1&0&1&0\\
0&1&0&-1&0&1\\
-1&0&1&0&-1&0\\
0&-1&0&1&0&-1\\
1&0&-1&0&1&0\\
0&1&0&-1&0&1
\end{pmatrix},
\]
which has rank two by construction and satisfies
$\mathbf v^T\widetilde L=\mathbf 0$.

By Proposition~\ref{prop:inverse-free-charpoly}, since $\widetilde L$
has rank two, the generalized characteristic polynomial
$P(\lambda)=\det(\lambda L^\Delta_{\mathrm{reg}}-\widetilde L)$ takes
the form $\lambda^4(c_6\lambda^2+c_5\lambda+c_4)$. Using the matrices
above, direct evaluation yields
\[
P(\lambda)
=
\frac{27}{32}(1+2\sqrt2)
\bigl(\sqrt2-4-12\lambda\bigr)^2\lambda^4.
\]
The coefficient of $\lambda^6$ is therefore
\[
c_6
=
\frac{144}{4}\cdot\frac{27}{8}(1+2\sqrt2)
=
\frac{243}{2}(1+2\sqrt2)
=
\det(L^\Delta_{\mathrm{reg}})>0.
\]

The quadratic factor has the double root
\[
\lambda_0
=
\frac{\sqrt2-4}{12}
=
-\frac13+\frac{\sqrt2}{12}
\approx-0.21548.
\]
Thus the two nonzero generalized eigenvalues of
\[
(\widetilde L|_{\mathcal W},L^\Delta|_{\mathcal W})
\]
coincide and are equal to $\lambda_0$. Since
\[
\lambda_0>-1,
\]
Proposition~\ref{prop:inverse-free-charpoly} gives
\[
H_{\mathcal C}|_{\mathcal W}\succ0.
\]
Hence the square is a nondegenerate strict local minimum modulo the
infinitesimal rotational direction. This conclusion is obtained without
computing $(L^\Delta_{\mathrm{reg}})^{-1}$ or choosing a basis of
$\mathcal W$.
\end{example}

\section{Reflection Symmetry and Symmetry Reduction}
\label{sec:reflection_symmetry}
While the equal-mass square analyzed earlier possesses multiple reflection
symmetries, its stability could be determined directly without symmetry
reduction because its corresponding matrices \(L^\Delta_{\mathrm{reg}}\)
and \(\widetilde{L}\) are purely numerical. For more general symmetric
configurations, however, these matrices contain symbolic parameters such as
variable masses or unspecified edge lengths. In such cases, expanding the
full \(6\times 6\) characteristic polynomial directly quickly becomes
impractical, as the resulting expressions grow too large to analyze
effectively. We now exploit reflection symmetry to rigorously reduce the
dimension of these symbolic calculations.

The argument applies whenever the planar central configuration is
invariant under the joint action of a reflection $\rho$ of the plane and a
matching permutation $\sigma$ of the body labels, in the sense that
applying $\rho$ to every position reproduces the same configuration with
the bodies relabeled by $\sigma$; the precise relationship between $\rho$
and $\sigma$ is given in \eqref{eq:reflection_invariance} below. Here
$O(2)$ denotes the group of linear orthogonal transformations of the
plane: its elements with $\det\rho=1$ are rotations, and those with
$\det\rho=-1$ are reflections, each automatically satisfying
$\rho^2=\mathbf I_2$. More precisely, let $\rho\in O(2)$ be a reflection,
and let $\sigma\in S_4$ satisfy $\sigma^2=\mathrm{id}$ with $\sigma\ne
\mathrm{id}$. We assume that
\begin{equation}\label{eq:reflection_invariance}
    \br_{\sigma(i)}=\rho\br_i,
    \qquad
    m_{\sigma(i)}=m_i,
    \qquad i=1,\dots,4.
\end{equation}

Since $\sigma\ne\mathrm{id}$ satisfies $\sigma^2=\mathrm{id}$, it is a
nontrivial involution, so its cycle decomposition contains only fixed
points and transpositions; for four labels, this leaves exactly two
possible cycle types: two fixed points and one transposition, or two
transpositions. These two cycle types correspond exactly to the two ways
the axis of $\rho$ can meet the configuration: if $\sigma$ fixes two
labels, the corresponding two bodies lie on the axis, giving a {\bf kite}
configuration; if $\sigma$ consists of two transpositions, no body lies on
the axis, giving an {\bf isosceles trapezoidal} configuration. Thus the
restriction to these two combinatorial types is not an additional
assumption, but a direct consequence of $\sigma$ being a nontrivial
involution on four labels. We treat each case in turn.

\subsection{Kite Symmetry}
\label{sec:kite_symmetry}

We consider the reflection type with two fixed labels. Label the
bodies so that the symmetry axis contains bodies $1$ and $3$, while
bodies $2$ and $4$ lie on opposite sides of the axis and are exchanged
by reflection. Thus the configuration is a kite, with cyclic ordering
$1,2,3,4$. Choose the symmetry axis to be the $y$-axis. Then bodies $1$
and $3$ have the form
\[
    \br_1=(0,y_1)^T,
    \qquad
    \br_3=(0,y_3)^T,
\]
and, writing $\br_2=(x,y)^T$, reflection gives
\[
    \br_4=\rho\br_2=(-x,y)^T.
\]
Accordingly,
\[
    \rho=\diag(-1,1),
    \qquad
    \sigma=(2\;4),
    \qquad
    m_2=m_4.
\]

The reflection-invariance relation \eqref{eq:reflection_invariance}
determines how the pair vector $\bq=(\bq_{12},\bq_{13},\bq_{14})$
transforms under $\rho$. Since bodies $1$ and $3$ lie on the axis,
$\rho\br_1=\br_1$ and $\rho\br_3=\br_3$, while $\rho\br_2=\br_4$ and
$\rho\br_4=\br_2$. Consequently,
\[
    \rho\bq_{14}=\rho(\br_1-\br_4)=\br_1-\br_2=\bq_{12},
    \qquad
    \rho\bq_{13}=\rho(\br_1-\br_3)=\br_1-\br_3=\bq_{13},
\]
and applying $\rho$ once more to the first identity, using
$\rho^2=\mathbf I_2$, gives $\rho\bq_{12}=\bq_{14}$.

These relations identify the linear map that governs the
reflection--relabeling operation on grounded pair space. Since
$\sigma(1)=1$, this map acts on the three independent blocks by
\[
    (R\,\cdot)_{1j}=\rho\,(\cdot)_{1\sigma(j)},
\]
and hence takes the matrix form
\begin{equation}\label{eq:R_axial_action}
    R=
    \begin{pmatrix}
        \mathbf{0}&\mathbf{0}&\rho\\
        \mathbf{0}&\rho&\mathbf{0}\\
        \rho&\mathbf{0}&\mathbf{0}
    \end{pmatrix}.
\end{equation}
Since $\rho^2=\mathbf I_2$ and $\rho^T=\rho$, the matrix $R$ is a
symmetric orthogonal involution: $R^2=\mathbf I_6$ and $R^T=R$.

The reflection--relabeling operation acts linearly on the
realizability space $\mathcal C$, and $R$ is precisely its matrix
representative in the coordinates $(\bq_{12},\bq_{13},\bq_{14})$.
Since $\mathcal C$ is a linear subspace, its tangent space is
canonically identified with $\mathcal C$ itself,
\[
    T\mathcal C\simeq\mathcal C\simeq\mathbb R^6,
\]
so $R$ is also the tangent lift of this action. In particular, for
\(\bu=(\bu_{12},\bu_{13},\bu_{14})\in\mathcal C\), the action is given
by \eqref{eq:R_axial_action}, with \(\bq\) replaced by \(\bu\).

Consequently,
\[
    \mathcal C=\mathcal C_+\oplus\mathcal C_-,
    \qquad
    \mathcal C_\pm=\{\bu\in\mathcal C:R\bu=\pm\bu\},
\]
is an orthogonal decomposition into the $+1$ and $-1$ eigenspaces of
$R$. We call these the symmetric and antisymmetric sectors,
respectively. The corresponding orthogonal projections are
\[
    P_\pm=\tfrac12(\mathbf I_6\pm R).
\]

To construct an explicit orthonormal basis for $\mathcal C_+$, we solve
the equation $R\bu=\bu$. By \eqref{eq:R_axial_action}, this means
\[
    \rho\bu_{14}=\bu_{12},
    \qquad
    \rho\bu_{13}=\bu_{13},
    \qquad
    \rho\bu_{12}=\bu_{14}.
\]
Since $\rho^2=\mathbf I_2$, the first and third conditions are equivalent.
Thus $\bu_{12}=(x_{12},y_{12})^T$ is free and
$\bu_{14}=\rho\bu_{12}$. Since $\rho=\diag(-1,1)$, the middle condition
$\rho\bu_{13}=\bu_{13}$ reads
\[
    (-x_{13},y_{13})^T=(x_{13},y_{13})^T,
\]
so $x_{13}=0$, while $y_{13}$ remains free. Hence every vector in
$\mathcal C_+$ has the form
\begin{equation}\label{eq:even_param}
    \bu_+=(x_{12},\,y_{12},\,0,\,y_{13},\,-x_{12},\,y_{12})^T.
\end{equation}

Similarly, the equation $R\bu=-\bu$ gives
\[
    \rho\bu_{14}=-\bu_{12},
    \qquad
    \rho\bu_{13}=-\bu_{13},
    \qquad
    \rho\bu_{12}=-\bu_{14}.
\]
Again, the first and third conditions are equivalent. The middle condition
reads
\[
    (-x_{13},y_{13})^T=(-x_{13},-y_{13})^T,
\]
so $y_{13}=0$, while $x_{13}$ remains free. Thus the diagonal
perturbation $\bu_{13}$ is horizontal, and every vector in
$\mathcal C_-$ has the form
\begin{equation}\label{eq:odd_param}
    \bu_-=(x_{12},\,y_{12},\,x_{13},\,0,\,x_{12},\,-y_{12})^T.
\end{equation}

Normalizing the three coordinate directions in \eqref{eq:even_param}
and \eqref{eq:odd_param} gives the orthonormal basis matrices
\begin{equation}\label{eq:reflection_adapted_bases}
B_+
=
\frac{1}{\sqrt{2}}
\begin{pmatrix}
 1 & 0 & 0\\
 0 & 1 & 0\\
 0 & 0 & 0\\
 0 & 0 & \sqrt{2}\\
-1 & 0 & 0\\
 0 & 1 & 0
\end{pmatrix},
\qquad
B_-
=
\frac{1}{\sqrt{2}}
\begin{pmatrix}
 1 & 0 & 0\\
 0 & 1 & 0\\
 0 & 0 & \sqrt{2}\\
 0 & 0 & 0\\
 1 & 0 & 0\\
 0 &-1 & 0
\end{pmatrix}.
\end{equation}
Their ranges are $\mathcal C_+$ and $\mathcal C_-$, respectively, and
\[
    B_\pm^TB_\pm=\mathbf I_3,
    \qquad
    B_+^TB_-=0,
    \qquad
    P_\pm=B_\pm B_\pm^T.
\]
Therefore,
\[
    Q=[B_+\mid B_-]\in O(6)
\]
is an orthogonal change-of-basis matrix to reflection-adapted
coordinates.

Before reducing the generalized eigenvalue problem by parity, we locate
the configuration vector and the rotational mode in the two reflection
sectors.
\begin{lemma}[Parity of the configuration and rotational modes for the kite]
\label{lem:kite-parity}
For the kite reflection $R$ defined by \eqref{eq:R_axial_action}, the
unperturbed configuration vector $\bq$ is reflection-even,
$\bq\in\mathcal C_+$, and the rotational mode $\bv=\mathbb J\bq$ is
reflection-odd, $\bv\in\mathcal C_-$.
\end{lemma}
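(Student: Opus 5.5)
The plan is a direct verification. We compute $R\bq$ and $R\bv$ from the block form \eqref{eq:R_axial_action}, using only the relations already derived from the reflection invariance \eqref{eq:reflection_invariance}, namely
\[
\rho\bq_{14}=\bq_{12},\qquad \rho\bq_{13}=\bq_{13},\qquad \rho\bq_{12}=\bq_{14}.
\]

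\emph{Configuration vector.} By \eqref{eq:R_axial_action},
\[
R\bq=(\rho\bq_{14},\,\rho\bq_{13},\,\rho\bq_{12})^T .
\]
The three relations above turn this directly into $(\bq_{12},\bq_{13},\bq_{14})^T=\bq$. Hence $\bq\in\mathcal C_+$. As a consistency check, with $\br_1=(0,y_1)$, $\br_2=(x,y)$, $\br_3=(0,y_3)$, $\br_4=(-x,y)$, one has $\bq_{12}=(-x,y_1-y)$, $\bq_{13}=(0,y_1-y_3)$ and $\bq_{14}=(x,y_1-y)$. This matches the even parametrization \eqref{eq:even_param}.

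\emph{Rotational mode.} The key step is the anticommutation identity $\rho\mathbf J=-\mathbf J\rho$. This holds for any reflection $\rho\in O(2)$, because conjugation by an orientation-reversing map inverts rotations, so $\rho R_\theta\rho=R_{-\theta}$. Differentiating at $\theta=0$ gives $\rho\mathbf J\rho=-\mathbf J$. For $\rho=\diag(-1,1)$ this can also be checked in one line of matrix multiplication.

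With $\bv=(\mathbf J\bq_{12},\mathbf J\bq_{13},\mathbf J\bq_{14})$, we then compute
\[
R\bv=(\rho\mathbf J\bq_{14},\,\rho\mathbf J\bq_{13},\,\rho\mathbf J\bq_{12})
=-(\mathbf J\rho\bq_{14},\,\mathbf J\rho\bq_{13},\,\mathbf J\rho\bq_{12})
=-(\mathbf J\bq_{12},\,\mathbf J\bq_{13},\,\mathbf J\bq_{14})=-\bv .
\]
Hence $\bv\in\mathcal C_-$. Equivalently, $R\mathbb J=-\mathbb J R$, so $R\bv=-\mathbb J R\bq=-\mathbb J\bq=-\bv$.

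There is no real obstacle here. The only point needing care is the sign in $\rho\mathbf J=-\mathbf J\rho$, which encodes the fact that a reflection reverses the sense of rotation. A quick cross-check is to verify that $\bv$ has the odd form \eqref{eq:odd_param}. Indeed, $\mathbf J\bq_{13}=(y_3-y_1,0)$ is horizontal, and $\mathbf J\bq_{12}=(y-y_1,-x)$, $\mathbf J\bq_{14}=(y-y_1,x)$ have equal first and opposite second components, as \eqref{eq:odd_param} requires.
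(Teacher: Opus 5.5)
Your proposal is correct and follows essentially the same route as the paper. Both read off $R\bq=\bq$ from the relations $\rho\bq_{14}=\bq_{12}$, $\rho\bq_{13}=\bq_{13}$, $\rho\bq_{12}=\bq_{14}$, and both obtain $R\bv=-\bv$ from $\rho\mathbf J=-\mathbf J\rho$, i.e.\ $R\mathbb J=-\mathbb J R$. Your coordinate cross-checks against \eqref{eq:even_param} and \eqref{eq:odd_param} are accurate extras.
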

\begin{proof}
The relations $\rho\bq_{14}=\bq_{12}$, $\rho\bq_{13}=\bq_{13}$, and
$\rho\bq_{12}=\bq_{14}$ established above are exactly the condition
$R\bq=\bq$:
\[
    R\bq
    =
    \begin{pmatrix}\rho\bq_{14}\\ \rho\bq_{13}\\ \rho\bq_{12}\end{pmatrix}
    =
    \begin{pmatrix}\bq_{12}\\ \bq_{13}\\ \bq_{14}\end{pmatrix}
    =\bq,
\]
so $\bq\in\mathcal C_+$.

For the rotational mode, recall $\mathbb J=\mathbf I_3\otimes \mathbf{J}$,
$\mathbf{J}=\begin{pmatrix}0&-1\\1&0\end{pmatrix}$, and $\bv=\mathbb J\bq$.
Since $\rho$ is a planar reflection, it reverses orientation, so
$\rho \mathbf{J}=-\mathbf{J}\rho$, and therefore $R$ anticommutes with $\mathbb J$:
$R\mathbb J=-\mathbb JR$. Hence
\[
    R\bv=R\mathbb J\bq=-\mathbb JR\bq=-\mathbb J\bq=-\bv,
\]
using $R\bq=\bq$ from above. Thus $\bv\in\mathcal C_-$.
\end{proof}

\subsection{Isosceles Trapezoid Symmetry}
\label{sec:trapezoid_symmetry}

We now consider the second reflection type, in which the symmetry axis
contains no body. Label the vertices of a convex isosceles trapezoid
counterclockwise so that bodies $1$ and $2$ form the lower base and
bodies $3$ and $4$ form the upper base, with body $1$ reflected to
body $2$ and body $3$ reflected to body $4$. Align the symmetry axis
with the $y$-axis. Then
\[
    \rho=\diag(-1,1),
    \qquad
    \sigma=(1\;2)(3\;4),
\]
and reflection symmetry requires
\[
    \br_2=\rho\br_1,
    \qquad
    \br_4=\rho\br_3,
    \qquad
    m_1=m_2,
    \qquad
    m_3=m_4.
\]

The reflection-invariance relation \eqref{eq:reflection_invariance}
determines how the pair vector $\bq=(\bq_{12},\bq_{13},\bq_{14})$
transforms under $\rho$. Since $\br_2=\rho\br_1$ and $\br_4=\rho\br_3$,
and $\rho^2=\mathbf I_2$,
\[
    \rho\bq_{12}
    =\rho(\br_1-\br_2)
    =\rho\br_1-\br_1
    =-(\br_1-\rho\br_1)
    =-\bq_{12}.
\]
Similarly,
\[
    \bq_{14}-\bq_{12}
    =(\br_1-\br_4)-(\br_1-\br_2)
    =\br_2-\br_4
    =\rho\br_1-\rho\br_3
    =\rho\bq_{13},
\]
and
\[
    \rho(\bq_{13}-\bq_{12})
    =\rho(\br_2-\br_3)
    =\rho(\rho\br_1-\br_3)
    =\br_1-\rho\br_3
    =\bq_{14}.
\]

These three relations show that reflecting and relabeling the grounded
pair vector is implemented by a fixed linear map, obtained by sending
edge $(1,j)$ to $(\sigma(1),\sigma(j))=(2,\sigma(j))$ and rewriting the
result in terms of the independent pairs via the triangle relations
$\bq_{23}=\bq_{13}-\bq_{12}$, $\bq_{24}=\bq_{14}-\bq_{12}$. Carrying
this out for each independent edge yields the matrix
\begin{equation}\label{eq:R_trapezoid_action}
    R=
    \begin{pmatrix}
        -\rho & \mathbf{0} & \mathbf{0} \\
        -\rho & \mathbf{0} & \rho \\
        -\rho & \rho & \mathbf{0}
    \end{pmatrix}.
\end{equation}

Because the reflection--relabeling operation acts linearly on the
realizability space $\mathcal C$, the matrix $R$ is also the tangent
lift of this action: it governs every tangent perturbation
$\bu=(\bu_{12},\bu_{13},\bu_{14})\in\mathcal C$ by
\[
    R\bu
    =
    \begin{pmatrix}
        -\rho\bu_{12}\\
        \rho(\bu_{14}-\bu_{12})\\
        \rho(\bu_{13}-\bu_{12})
    \end{pmatrix},
\]
with no separate definition required.
Since $R^2=\mathbf I_6$, every
$\bu\in\mathcal C$ decomposes as
\[
    \bu=\bu_++\bu_-,
    \qquad
    \bu_+:=\frac12(\bu+R\bu),
    \qquad
    \bu_-:=\frac12(\bu-R\bu),
\]
with $R\bu_+=\bu_+$ and $R\bu_-=-\bu_-$. Thus
\[
    \mathcal C=\mathcal C_+\oplus\mathcal C_-,
    \qquad
    \mathcal C_\pm=\{\bu\in\mathcal C:R\bu=\pm\bu\}.
\]

To construct explicit bases for these sectors, we solve the eigenvalue
conditions directly. For the even sector $\mathcal C_+$, the condition
$R\bu=\bu$ applied to \eqref{eq:R_trapezoid_action} requires
$\bu_{12}=-\rho\bu_{12}$. Since $\rho=\diag(-1,1)$, this implies
$(x_{12},y_{12})=(x_{12},-y_{12})$, which forces $y_{12}=0$.

The remaining equation gives $\bu_{13}=\rho(\bu_{14}-\bu_{12})$.
Writing $\bu_{13}=(x_{13},y_{13})^T$ and $\bu_{14}=(x_{14},y_{14})^T$
yields $(x_{13},y_{13})^T=(-x_{14}+x_{12},\,y_{14})^T$, which implies
$y_{14}=y_{13}$ and $x_{14}=x_{12}-x_{13}$. Thus every vector in
$\mathcal C_+$ has the form
\begin{equation}\label{eq:even_param_trapezoid}
    \bu_+=(x_{12},\,0,\,x_{13},\,y_{13},\,x_{12}-x_{13},\,y_{13})^T.
\end{equation}

Similarly, the odd condition $R\bu=-\bu$ requires $\bu_{12}=\rho\bu_{12}$.
This implies $(x_{12},y_{12})=(-x_{12},y_{12})$, which forces $x_{12}=0$.
The subsequent constraint $-\bu_{13}=\rho(\bu_{14}-\bu_{12})$ expands
to $(-x_{13},-y_{13})=(-x_{14},y_{14}-y_{12})$, yielding $x_{14}=x_{13}$
and $y_{14}=y_{12}-y_{13}$. Thus every vector in $\mathcal C_-$ has the
form
\begin{equation}\label{eq:odd_param_trapezoid}
    \bu_-=(0,\,y_{12},\,x_{13},\,y_{13},\,x_{13},\,y_{12}-y_{13})^T.
\end{equation}

Unlike the kite case, isolating the independent coordinate directions in
\eqref{eq:even_param_trapezoid} and \eqref{eq:odd_param_trapezoid} produces
basis vectors that are not mutually orthogonal (for instance, the
$x_{12}$ and $x_{13}$ directions in $\bu_+$ overlap). Applying the
Gram-Schmidt orthonormalization process to these directions within each
subspace produces the following $6\times3$ orthonormal basis matrices:
\begin{equation}\label{eq:trapezoid_adapted_bases}
B_+
=
\begin{pmatrix}
 \frac{1}{\sqrt{2}} & \frac{1}{\sqrt{6}} & 0 \\
 0 & 0 & 0 \\
 \frac{1}{\sqrt{2}} & -\frac{1}{\sqrt{6}} & 0 \\
 0 & 0 & \frac{1}{\sqrt{2}} \\
 0 & \frac{2}{\sqrt{6}} & 0 \\
 0 & 0 & \frac{1}{\sqrt{2}}
\end{pmatrix},
\qquad
B_-
=
\begin{pmatrix}
 0 & 0 & 0 \\
 \frac{1}{\sqrt{2}} & \frac{1}{\sqrt{6}} & 0 \\
 0 & 0 & \frac{1}{\sqrt{2}} \\
 \frac{1}{\sqrt{2}} & -\frac{1}{\sqrt{6}} & 0 \\
 0 & 0 & \frac{1}{\sqrt{2}} \\
 0 & \frac{2}{\sqrt{6}} & 0
\end{pmatrix}.
\end{equation}

A subtle but important distinction arises here. Unlike the kite action
\eqref{eq:R_axial_action}, the matrix $R$ in
\eqref{eq:R_trapezoid_action} is neither symmetric nor orthogonal with
respect to the standard Euclidean inner product on rooted pair
coordinates:
\[
    R^T\neq R,
    \qquad
    R^TR\neq\mathbf I_6.
\]
Consequently, its $+1$ and $-1$ eigenspaces need not be Euclidean
orthogonal. Indeed, $B_+^TB_-\neq0$. Thus $Q=[B_+\mid B_-]$ is
invertible, but not orthogonal ($Q^TQ\neq\mathbf I_6$).

Although the rooted trapezoid action is not orthogonal in the standard
Euclidean metric, the two quadratic forms arising from the Hessian
splitting remain invariant under it. The following proposition establishes
this invariance.
\begin{proposition}\label{prop:trapezoid-split-invariance}
For the trapezoidal reflection $\sigma=(1\;2)(3\;4)$ and
$\rho=\diag(-1,1)$, the two terms in the splitting
\[
    H_{\mathcal C}=L^\Delta+\widetilde L
\]
are separately invariant under the rooted-coordinate action
\eqref{eq:R_trapezoid_action}:
\[
    R^T L^\Delta R=L^\Delta,
    \qquad
    R^T\widetilde L R=\widetilde L.
\]
\end{proposition}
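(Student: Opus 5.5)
Rather than checking the $6\times6$ identities entry by entry, I would read both forms as quadratic forms on actual body displacements and use that they are built from reflection-equivariant edge data. Write $\bu=(\bu_{12},\bu_{13},\bu_{14})$ and extend it to all six pair vectors by the triangle relations, i.e. $\bu_{ij}=\bu_{1j}-\bu_{1i}$. Equivalently, take displacements $\mathbf x_1=0$, $\mathbf x_a=-\bu_{1a}$. By Proposition~\ref{prop:LDelta_psd} and \eqref{eq:HC_entrywise}, \eqref{eq:ell_entrywise},
\[
\bu^TL^\Delta\bu=\sum_{i<j}\delta_{ij}\bigl(\bu_{ij}\cdot\hat{\bq}_{ij}\bigr)^2,
\qquad
\bu^T\widetilde L\bu=\sum_{i<j}\alpha^\perp_{ij}\,|\bu_{ij}|^2 .
\]
The second identity holds because $\widetilde L$ is the grounded Laplacian with weights $\alpha^\perp_{ij}\mathbf I_2$. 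Both are edge sums over all six pairs, so the grounding at body $1$ becomes invisible. Since $R$ is linear and both sides are symmetric bilinear forms, it suffices to prove $(R\bu)^TL^\Delta(R\bu)=\bu^TL^\Delta\bu$, and likewise for $\widetilde L$, for every $\bu$. Polarization then gives the matrix identities $R^TL^\Delta R=L^\Delta$ and $R^T\widetilde LR=\widetilde L$.

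The key step is to describe $R\bu$ on all six edges, not just the three rooted ones. By construction, $R$ sends edge $(1,j)$ to $(\sigma(1),\sigma(j))$ and applies $\rho$. I would verify from \eqref{eq:R_trapezoid_action} and the triangle relations that the extended pair vector of $R\bu$ satisfies
\[
(R\bu)_{ij}=\rho\,\bu_{\sigma(i)\sigma(j)}
\]
for all pairs, with the skew convention $\bu_{ji}=-\bu_{ij}$. The three rooted edges are exactly the displayed formula: for example $(R\bu)_{12}=-\rho\bu_{12}=\rho\bu_{21}$ and $(R\bu)_{13}=\rho(\bu_{14}-\bu_{12})=\rho\bu_{24}$. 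For a dependent edge, say $(R\bu)_{23}=(R\bu)_{13}-(R\bu)_{12}$, we get $\rho(\bu_{24}-\bu_{21})=\rho\bu_{14}=\rho\bu_{\sigma(2)\sigma(3)}$. The edges $24$ and $34$ are handled the same way. In displacement language, this says that $R$ is the map $\mathbf x_i\mapsto\rho\,\mathbf x_{\sigma(i)}$ followed by re-grounding at body $1$, and the edge sums do not see translations.

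Next comes the invariance of the edge data. From $\br_{\sigma(i)}=\rho\br_i$ and $m_{\sigma(i)}=m_i$ we get $\bq_{\sigma(i)\sigma(j)}=\rho\bq_{ij}$. Hence $q_{\sigma(i)\sigma(j)}=q_{ij}$ and $\mu_{\sigma(i)\sigma(j)}=\mu_{ij}$, so $\delta$ and $\alpha^\perp$ are $\sigma$-invariant, and $\hat{\bq}_{\sigma(i)\sigma(j)}=\rho\hat{\bq}_{ij}$ up to the sign fixed by the ordering. Since $\rho$ is orthogonal,
\[
(R\bu)_{ij}\cdot\hat{\bq}_{ij}=\rho\bu_{\sigma(i)\sigma(j)}\cdot\hat{\bq}_{ij}=\bu_{\sigma(i)\sigma(j)}\cdot\rho\hat{\bq}_{ij}=\pm\,\bu_{\sigma(i)\sigma(j)}\cdot\hat{\bq}_{\sigma(i)\sigma(j)} .
\]
The sign disappears upon squaring, and likewise $|(R\bu)_{ij}|^2=|\bu_{\sigma(i)\sigma(j)}|^2$. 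Reindexing each edge sum by the bijection $\{i,j\}\mapsto\{\sigma(i),\sigma(j)\}$ of the six edges gives the two invariances.

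The main obstacle is the bookkeeping step: confirming that the rooted matrix \eqref{eq:R_trapezoid_action} really is the relabeling $(R\bu)_{ij}=\rho\bu_{\sigma(i)\sigma(j)}$ on every edge, including the skew sign on the edge $(1,2)\mapsto(2,1)$. The non-orthogonality of $R$ is harmless here, because the argument uses only orthogonality of $\rho$ acting edgewise on the translation-invariant edge sums. As a fallback, one can check $R^T\widetilde LR=\widetilde L$ directly with $\widetilde L=\ell\otimes\mathbf I_2$, using $\alpha^\perp_{13}=\alpha^\perp_{24}$ and $\alpha^\perp_{14}=\alpha^\perp_{23}$.
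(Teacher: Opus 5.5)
Your proposal is correct and is essentially the paper's argument. The paper lifts to ungrounded body coordinates and shows the full Laplacians are invariant under $(\mathcal R\mathbf z)_i=\rho\,\mathbf z_{\sigma(i)}$ because that map permutes the edge summands. It then checks that $SR\mathbf u$ and $\mathcal RS\mathbf u$ differ only by a common translation, which is exactly your edge-difference identity $(R\bu)_{ij}=\rho\,\bu_{\sigma(i)\sigma(j)}$ on all six pairs (your verification, including the skew sign on $(1,2)\mapsto(2,1)$, checks out) followed by reindexing the edge sums.
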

\begin{proof}
Work first in the ungrounded body-coordinate space $(\mathbb R^2)^4$. Let
$\mathbf z=(\mathbf z_1,\dots,\mathbf z_4)\in(\mathbb R^2)^4$, and let
$\mathcal R\in\mathbb R^{8\times8}$ be the combined reflection--relabeling
action
\[
    (\mathcal R\mathbf z)_i=\rho\mathbf z_{\sigma(i)}.
\]
For each edge $\{i,j\}$, let $\mathbf e_1,\ldots,\mathbf e_4$ denote the
standard basis of $\mathbb R^4$ and set
\[
    E_{ij}=(\mathbf e_i-\mathbf e_j)(\mathbf e_i-\mathbf e_j)^T
    \in\mathbb R^{4\times4},
    \qquad\text{so that}\qquad
    \mathbf z^T(E_{ij}\otimes\mathbf I_2)\mathbf z=\lVert\mathbf z_i-\mathbf z_j\rVert^2.
\]
The full transverse and gap Laplacians are
\[
    L_{\mathrm{full}}
    =\sum_{i<j}\alpha_{ij}^{\perp}(E_{ij}\otimes\mathbf I_2),
    \qquad
L^{\Delta}_{\mathrm{full}}
    =\sum_{i<j}\delta_{ij}
      \bigl(E_{ij}\otimes\hat{\bq}_{ij}\hat{\bq}_{ij}^{T}\bigr).
\]
\smallskip
\noindent\textbf{Claim: these are the full Laplacians of
Section~\ref{sec:hessian}.} For any $k,l\in\{1,\dots,4\}$,
\[
    (E_{ij})_{kl}=(\mathbf e_i-\mathbf e_j)_k(\mathbf e_i-\mathbf e_j)_l
    =\begin{cases}
        1 & k=l\in\{i,j\},\\
        -1 & \{k,l\}=\{i,j\},\ k\neq l,\\
        0 & \text{otherwise,}
    \end{cases}
\]
since $(\mathbf e_i-\mathbf e_j)_k$ vanishes unless $k\in\{i,j\}$, in which
case it equals $1$ (if $k=i$) or $-1$ (if $k=j$). Consequently, summing
over all edges,
\[
    \Bigl(\sum_{i<j}\alpha_{ij}^\perp E_{ij}\Bigr)_{kk}
    =\sum_{j\neq k}\alpha_{kj}^\perp,
    \qquad
    \Bigl(\sum_{i<j}\alpha_{ij}^\perp E_{ij}\Bigr)_{kl}
    =-\alpha_{kl}^\perp\quad(k\neq l),
\]
which are precisely the entrywise defining relations for
$\ell_{\mathrm{full}}$.
Hence $\sum_{i<j}\alpha_{ij}^\perp E_{ij}=\ell_{\mathrm{full}}$, so
$L_{\mathrm{full}}=\ell_{\mathrm{full}}\otimes\mathbf I_2$ exactly as
defined in Section~\ref{sec:pairspace} and recalled in
Section~\ref{sec:hessian}. The identical argument, with the scalar
weight $\alpha_{ij}^\perp$ replaced by the matrix weight
$\mathbf D_{ij}=\delta_{ij}\hat{\bq}_{ij}\hat{\bq}_{ij}^T$, shows
that $\sum_{i<j}\delta_{ij}(E_{ij}\otimes\hat{\bq}_{ij}\hat{\bq}_{ij}^T)$
agrees with the full gap Laplacian $L^\Delta_{\mathrm{full}}$ already
introduced in the proof of Proposition~\ref{prop:LDelta_psd} in
Section~\ref{sec:hessian}.
\smallskip

The reflection sends the edge $\{i,j\}$ to $\{\sigma(i),\sigma(j)\}$. By
\eqref{eq:reflection_invariance}, $m_{\sigma(i)}=m_i$ and
$\bq_{\sigma(i)\sigma(j)}=\rho\bq_{ij}$, hence
\[
    \alpha_{\sigma(i)\sigma(j)}^\perp=\alpha_{ij}^\perp,
    \qquad
    \delta_{\sigma(i)\sigma(j)}=\delta_{ij},
    \qquad
    \hat{\bq}_{\sigma(i)\sigma(j)}\hat{\bq}_{\sigma(i)\sigma(j)}^T
    =\rho\hat{\bq}_{ij}\hat{\bq}_{ij}^T\rho^T.
\]
Thus $\mathcal R$ merely permutes the edge summands, and therefore
\[
\mathcal R^T L^{\Delta}_{\mathrm{full}}\mathcal R=L^{\Delta}_{\mathrm{full}},
    \qquad
    \mathcal R^T L_{\mathrm{full}}\mathcal R= L_{\mathrm{full}}.
\]

Let $S:(\mathbb R^2)^3\to(\mathbb R^2)^4$,
$S(\mathbf u_2,\mathbf u_3,\mathbf u_4)=(0,\mathbf u_2,\mathbf u_3,\mathbf u_4)$
be the inclusion of the gauge slice with body $1$ grounded, so that
\[
L^\Delta=S^TL^{\Delta}_{\mathrm{full}}S,
    \qquad
    \widetilde L=S^T L_{\mathrm{full}}S.
\]

For every $\mathbf u\in(\mathbb R^2)^3$, the vectors $S R\mathbf u$ and
$\mathcal RS\mathbf u$ differ by a common translation. Using $R$ from
\eqref{eq:R_trapezoid_action} and $S(\mathbf v_2,\mathbf v_3,\mathbf v_4)=(0,\mathbf v_2,\mathbf v_3,\mathbf v_4)$,
\[
    SR\mathbf u=\bigl(0,\ -\rho\mathbf u_2,\ -\rho\mathbf u_2+\rho\mathbf u_4,\ -\rho\mathbf u_2+\rho\mathbf u_3\bigr),
\]
while, using $(\mathcal R\mathbf z)_i=\rho\mathbf z_{\sigma(i)}$ with
$\sigma(1)=2,\ \sigma(2)=1,\ \sigma(3)=4,\ \sigma(4)=3$,
\[
    \mathcal RS\mathbf u=\bigl(\rho\mathbf u_2,\ 0,\ \rho\mathbf u_4,\ \rho\mathbf u_3\bigr).
\]
Subtracting slot by slot,
\[
\begin{aligned}
    (SR\mathbf u)_1-(\mathcal RS\mathbf u)_1&=0-\rho\mathbf u_2=-\rho\mathbf u_2,\\
    (SR\mathbf u)_2-(\mathcal RS\mathbf u)_2&=-\rho\mathbf u_2-0=-\rho\mathbf u_2,\\
    (SR\mathbf u)_3-(\mathcal RS\mathbf u)_3&=(-\rho\mathbf u_2+\rho\mathbf u_4)-\rho\mathbf u_4=-\rho\mathbf u_2,\\
    (SR\mathbf u)_4-(\mathcal RS\mathbf u)_4&=(-\rho\mathbf u_2+\rho\mathbf u_3)-\rho\mathbf u_3=-\rho\mathbf u_2.
\end{aligned}
\]
Thus every slot agrees on the same discrepancy, so
\[
    SR\mathbf u=\mathcal RS\mathbf u+\mathbf 1_4\otimes\mathbf a,
    \qquad
    \mathbf a=-\rho\mathbf u_2.
\]

Since both full Laplacians annihilate translation vectors, replacing
$SR\mathbf u$ by $\mathcal RS\mathbf u$ does not change either
associated quadratic form.

Hence, for $A_{\mathrm{full}}$ equal to $L^{\Delta}_{\mathrm{full}}$ or
$L_{\mathrm{full}}$,
\[
\begin{aligned}
    \mathbf u^TR^T(S^TA_{\mathrm{full}}S)R\mathbf u
    &=(SR\mathbf u)^TA_{\mathrm{full}}(SR\mathbf u)\\
    &=(\mathcal RS\mathbf u)^TA_{\mathrm{full}}(\mathcal RS\mathbf u)\\
    &=(S\mathbf u)^T\mathcal R^TA_{\mathrm{full}}\mathcal RS\mathbf u\\
    &=(S\mathbf u)^TA_{\mathrm{full}}S\mathbf u\\
    &=\mathbf u^T(S^TA_{\mathrm{full}}S)\mathbf u.
\end{aligned}
\]
Since this holds for every $\mathbf u\in(\mathbb R^2)^3$, it follows that
\[
    R^T(S^TA_{\mathrm{full}}S)R=S^TA_{\mathrm{full}}S,
\]
that is, taking $A_{\mathrm{full}}=L^{\Delta}_{\mathrm{full}}$ and
$A_{\mathrm{full}}=L_{\mathrm{full}}$ in turn, and recalling
$L^\Delta=S^TL^{\Delta}_{\mathrm{full}}S$ and
$\widetilde L=S^TL_{\mathrm{full}}S$,
\[
    R^TL^\Delta R=L^\Delta,
    \qquad
    R^T\widetilde L R=\widetilde L.
\]
\end{proof}
We now locate the configuration vector and the rotational mode in the
two eigenspaces $\mathcal C_+$ and $\mathcal C_-$ of the trapezoidal
reflection.
\begin{lemma}[Parity of the configuration and rotational modes for the
trapezoid]
\label{lem:trapezoid-parity}
For the trapezoidal reflection $R$ defined by
\eqref{eq:R_trapezoid_action}, the unperturbed configuration vector
$\bq$ is reflection-even, $\bq\in\mathcal C_+$, and the rotational
mode $\bv=\mathbb J\bq$ is reflection-odd, $\bv\in\mathcal C_-$.
\end{lemma}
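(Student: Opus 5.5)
The plan mirrors the kite argument of Lemma~\ref{lem:kite-parity}, adapted to the non-orthogonal action \eqref{eq:R_trapezoid_action}. First I show $R\bq=\bq$. By \eqref{eq:R_trapezoid_action} this amounts to three block identities:
\[
-\rho\bq_{12}=\bq_{12},\qquad \rho(\bq_{14}-\bq_{12})=\bq_{13},\qquad \rho(\bq_{13}-\bq_{12})=\bq_{14}.
\]
The first is the computed relation $\rho\bq_{12}=-\bq_{12}$. The third is exactly the identity $\rho(\bq_{13}-\bq_{12})=\bq_{14}$ derived before the statement. For the second, I apply $\rho$ to $\bq_{14}-\bq_{12}=\rho\bq_{13}$ and use $\rho^2=\mathbf I_2$. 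Hence $\bq\in\mathcal C_+$.

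As a sanity check, I can also verify this in coordinates. Write $\br_1=(-a,b)$, $\br_2=(a,b)$, $\br_3=(c,d)$, $\br_4=(-c,d)$. Then $\bq_{12}=(-2a,0)$, $\bq_{13}=(-a-c,b-d)$ and $\bq_{14}=(c-a,b-d)$, which matches the form \eqref{eq:even_param_trapezoid} with $x_{12}=-2a$, $x_{13}=-a-c$ and $y_{13}=b-d$, since $x_{12}-x_{13}=c-a$.

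For the rotational mode, the main obstacle is that $R$ is not block diagonal, so the kite argument has to be checked rather than quoted. Every block of $R$ is $0$ or $\pm\rho$, so $R=T\otimes\rho$ for the $3\times3$ matrix
\[
T=\begin{pmatrix}-1&0&0\\-1&0&1\\-1&1&0\end{pmatrix}.
\]
Since $\mathbb J=\mathbf I_3\otimes\mathbf J$, the mixed-product property gives $R\mathbb J=T\otimes\rho\mathbf J$ and $\mathbb JR=T\otimes\mathbf J\rho$. The orientation-reversal relation $\rho\mathbf J=-\mathbf J\rho$ for $\rho=\diag(-1,1)$ then yields $R\mathbb J=-\mathbb JR$.

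It follows that $R\bv=R\mathbb J\bq=-\mathbb JR\bq=-\mathbb J\bq=-\bv$, so $\bv\in\mathcal C_-$. This can be cross-checked against \eqref{eq:odd_param_trapezoid}: $\mathbf J\bq_{12}=(0,-2a)$ has vanishing $x$-component, and the remaining components satisfy the stated relations.
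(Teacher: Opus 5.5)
Your proof is correct and follows the paper's argument. You read off $R\bq=\bq$ from the same three block relations, and $R\bv=-\bv$ rests on the same anticommutation $\rho\mathbf J=-\mathbf J\rho$; the only difference is that you package this as the operator identity $R\mathbb J=-\mathbb J R$ via $R=T\otimes\rho$ (as in the kite lemma), whereas the paper checks the three blocks of $R\bv$ directly using the relations among $\bq_{12},\bq_{13},\bq_{14}$.
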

\begin{proof}
The relations $\rho\bq_{12}=-\bq_{12}$, $\bq_{14}-\bq_{12}=\rho\bq_{13}$,
and $\rho(\bq_{13}-\bq_{12})=\bq_{14}$ established above give
\[
    R\bq
    =
    \begin{pmatrix}
        -\rho\bq_{12}\\
        \rho(\bq_{14}-\bq_{12})\\
        \rho(\bq_{13}-\bq_{12})
    \end{pmatrix}
    =
    \begin{pmatrix}
        \bq_{12}\\
        \rho(\rho\bq_{13})\\
        \bq_{14}
    \end{pmatrix}
    =
    \begin{pmatrix}
        \bq_{12}\\ \bq_{13}\\ \bq_{14}
    \end{pmatrix}
    =\bq,
\]
so $\bq\in\mathcal C_+$.

For the rotational mode, recall $\mathbb J=\mathbf I_3\otimes \mathbf{J}$,
$\mathbf{J}=\begin{pmatrix}0&-1\\1&0\end{pmatrix}$, and $\bv=\mathbb J\bq$.
Since $\rho$ is a planar reflection, $\rho \mathbf{J}=-\mathbf{J}\rho$. Using this
together with the three relations above,
\[
    -\rho\bv_{12}=-\rho \mathbf{J}\bq_{12}=\mathbf{J}\rho\bq_{12}=\mathbf{J}(-\bq_{12})=-\bv_{12},
\]
\[
    \rho(\bv_{14}-\bv_{12})
    =\rho \mathbf{J}(\bq_{14}-\bq_{12})
    =\rho \mathbf{J}(\rho\bq_{13})
    =\rho(-\rho \mathbf{J})\bq_{13}
    =-\mathbf{J}\bq_{13}
    =-\bv_{13},
\]
\[
    \rho(\bv_{13}-\bv_{12})
    =\rho \mathbf{J}(\bq_{13}-\bq_{12})
    =\rho \mathbf{J}(\rho\bq_{14})
    =\rho(-\rho \mathbf{J})\bq_{14}
    =-\mathbf{J}\bq_{14}
    =-\bv_{14}.
\]
Substituting into \eqref{eq:R_trapezoid_action},
\[
    R\bv=(-\bv_{12},-\bv_{13},-\bv_{14})=-\bv,
\]
so $\bv\in\mathcal C_-$.
\end{proof}
By construction, the columns of $B_+$ and $B_-$ span the $+1$ and $-1$
eigenspaces of $R$, respectively. Equivalently,
\[
    RB_+=B_+,
    \qquad
    RB_-=-B_-.
\]
Thus, if \(Q=[B_+\mid B_-],\) then the matrix of a bilinear form represented by
$A\in\mathbb R^{6\times6}$ in these reflection-adapted coordinates is
\[
    Q^TAQ
    =
    \begin{pmatrix}
        B_+^TAB_+ & B_+^TAB_-\\
        B_-^TAB_+ & B_-^TAB_-
    \end{pmatrix}.
\]
In particular, the off-diagonal block $B_+^TAB_-$ measures the coupling
between the reflection-even and reflection-odd sectors.

Proposition~\ref{prop:trapezoid-split-invariance} shows that both
$L^\Delta$ and $\widetilde L$ satisfy the invariance relation
$R^TAR=A$. The following lemma shows that this invariance forces the
even--odd coupling block to vanish. Consequently, although
$\mathcal C_+$ and $\mathcal C_-$ are not orthogonal in the standard
Euclidean inner product, the quadratic forms associated with
$L^\Delta$ and $\widetilde L$ are block-diagonal in the
reflection-adapted coordinates.
\begin{lemma}\label{lem:parity-vanishing}
Let $A\in\mathbb R^{6\times6}$ satisfy
\(R^TAR=A.\)
Then
\(B_+^TAB_-=0.\)
\end{lemma}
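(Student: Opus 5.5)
The argument uses only the invariance $R^TAR=A$ and the eigenvector relations $RB_+=B_+$ and $RB_-=-B_-$ recorded just before the lemma.

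First, rewrite the coupling block by inserting $A=R^TAR$:
\[
B_+^TAB_-=B_+^T R^TARB_-=(RB_+)^TA(RB_-).
\]
Next, substitute $RB_+=B_+$ and $RB_-=-B_-$. This gives
\[
B_+^TAB_-=B_+^TA(-B_-)=-B_+^TAB_-,
\]
so $2B_+^TAB_-=0$, and therefore $B_+^TAB_-=0$.

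This proof requires neither symmetry of $A$ nor orthogonality of $R$, which is why it applies to the non-orthogonal trapezoid action. It is also basis-independent: the bilinear form $(\mathbf x,\mathbf y)\mapsto \mathbf x^TA\mathbf y$ is $R$-invariant, so it pairs an even vector with an odd vector to its own negative, which must be zero.

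The only point that needs checking is that $RB_\pm=\pm B_\pm$ holds column by column. This follows directly from the parametrizations \eqref{eq:even_param_trapezoid} and \eqref{eq:odd_param_trapezoid}, because Gram--Schmidt keeps the columns inside the respective eigenspaces. I do not expect any real obstacle. When $A$ is symmetric, the same identity also gives $B_-^TAB_+=(B_+^TAB_-)^T=0$; without symmetry, the mirrored computation $(RB_-)^TA(RB_+)$ yields it just as directly.
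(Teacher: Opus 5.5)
Your proof is correct and is essentially identical to the paper's. Both insert $A=R^TAR$, rewrite $B_+^TR^TARB_-=(RB_+)^TA(RB_-)$, and use $RB_+=B_+$, $RB_-=-B_-$ to conclude $B_+^TAB_-=-B_+^TAB_-=0$; your remark that Gram--Schmidt keeps the columns in the eigenspaces justifies $RB_\pm=\pm B_\pm$, which the paper simply takes as holding by construction.
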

\begin{proof}
Using $R^TAR=A$ together with $RB_+=B_+$ and $RB_-=-B_-$,
\[
    B_+^TAB_-
    =B_+^T(R^TAR)B_-
    =(RB_+)^TA(RB_-)
    =B_+^TA(-B_-)
    =-B_+^TAB_-,
\]
so $B_+^TAB_-=0$.
\end{proof}

By Proposition~\ref{prop:trapezoid-split-invariance}, both $L^\Delta$
and $\widetilde L$ satisfy the hypothesis of
Lemma~\ref{lem:parity-vanishing}. Hence
\(B_+^T L^\Delta B_-=0\), \(B_+^T\widetilde L\,B_-=0.\)

A subtlety arises when passing from $L^\Delta$ to its regularized
version.  Since $R$ is not Euclidean-orthogonal in rooted coordinates,
the Euclidean regularization
$\bv\bv^T/(\bv^T\bv)$ need not be invariant under the rooted action,
in the sense that it need not satisfy
\[
    R^T\left(\frac{\bv\bv^T}{\bv^T\bv}\right)R
    =
    \frac{\bv\bv^T}{\bv^T\bv},
\]
even though $R\bv=-\bv$, by Lemma~\ref{lem:trapezoid-parity}, for the
rotational mode $\bv$.

To obtain an invariant regularization, we instead use the
mass metric induced on the rooted pair coordinates.
Let $\mu_{ij}=m_im_j/M$, as in Section~\ref{sec:pairspace}. The moment
of inertia, expressed in the independent rooted pair vectors, has the
form
\[
    I(\bq)
    =
    \bq^TG\bq,
\]
where
\begin{equation}\label{eq:Gdef}
    G=G_3\otimes\mathbf I_2,
    \qquad
    G_3=
    \begin{pmatrix}
    \mu_{12}+\mu_{23}+\mu_{24} & -\mu_{23} & -\mu_{24}\\
    -\mu_{23} & \mu_{13}+\mu_{23}+\mu_{34} & -\mu_{34}\\
    -\mu_{24} & -\mu_{34} & \mu_{14}+\mu_{24}+\mu_{34}
    \end{pmatrix}.
\end{equation}

Indeed, substituting the triangle relations
\[
    \bq_{23}=\bq_{13}-\bq_{12},\qquad
    \bq_{24}=\bq_{14}-\bq_{12},\qquad
    \bq_{34}=\bq_{14}-\bq_{13}
\]
into
\[
    I(\bq)=\sum_{i<j}\mu_{ij}\|\bq_{ij}\|^2
\]
and collecting the coefficients of
$\|\bq_{12}\|^2$, $\|\bq_{13}\|^2$,
$\|\bq_{14}\|^2$, and the three cross terms gives
\[
    I(\bq)=\bq^T(G_3\otimes\mathbf I_2)\bq.
\]
Since $m_{\sigma(i)}=m_i$, the reflection preserves the pair-space
inertia form $I(\bq)$. Since $I(\bq)=\bq^TG\bq$ in rooted coordinates,
this is equivalent to
\[
    R^TGR=G.
\]

We therefore define the $G$-regularized gap Laplacian by
\[
    L^\Delta_{\mathrm{reg},G}
    =
    L^\Delta+\frac{G\bv\bv^TG}{\bv^TG\bv}.
\]
This correction is itself invariant under the rooted reflection action.
Indeed, $R\bv=-\bv$ by Lemma~\ref{lem:trapezoid-parity} and $R^TGR=G$.
The latter identity also implies
\(R^TG=GR^{-1}=GR,\)
because $R^2=\mathbf I_6$. Hence
\[
\begin{aligned}
R^T\left(
    \frac{G\bv\bv^TG}{\bv^TG\bv}
\right)R
&=
\frac{(R^TG\bv)(\bv^TGR)}{\bv^TG\bv}\\
&=
\frac{G(R\bv)(R\bv)^TG}{\bv^TG\bv}\\
&=
\frac{G\bv\bv^TG}{\bv^TG\bv}.
\end{aligned}
\]
Together with $R^TL^\Delta R=L^\Delta$, this gives
\[
    R^TL^\Delta_{\mathrm{reg},G}R
    =
    L^\Delta_{\mathrm{reg},G}.
\]
Lemma~\ref{lem:parity-vanishing} therefore applies to the regularized
gap Laplacian and yields
\[
    B_+^TL^\Delta_{\mathrm{reg},G}B_-=0.
\]
\begin{remark}[Compatibility with the earlier regularization]
 Replacing the Euclidean
regularization $\bv\bv^T/\bv^T\bv$ in
Proposition~\ref{prop:coordinate-free-reduction} by this $G$-weighted
version does not affect its validity, since the proof there only uses
that the regularized matrix is symmetric positive definite. This
persists here: $L^\Delta$ is positive semidefinite with kernel
$\operatorname{span}(\bv)$ by Proposition~\ref{prop:LDelta_psd} and Lemma~\ref{lem:LDelta_reg_pd},
and $G$ is positive definite, so for $\mathbf x\neq0$,
\[
    \mathbf x^T L^\Delta_{\mathrm{reg},G}\mathbf x
    =
    \mathbf x^TL^\Delta \mathbf x
    +
    \frac{(\bv^TG\mathbf x)^2}{\bv^TG\bv}
    >0.
\]
Indeed, if the first term vanishes, then
$\mathbf x=c\bv$ for some $c\neq0$, and the second term equals
$c^2\bv^TG\bv>0$.
Hence $L^\Delta_{\mathrm{reg},G}$ is symmetric positive definite.
The proof of Proposition~\ref{prop:coordinate-free-reduction} uses only
this property, so the proposition remains valid after replacing
$L^\Delta_{\mathrm{reg}}$ by $L^\Delta_{\mathrm{reg},G}$. Consequently,
the inverse-free adjugate formula \eqref{eq:lambda_adjugate}, presented
in Subsection~\ref{subssec:symmetry_reduced_pencils}
 below, applies unchanged to this
$G$-weighted version. 
\end{remark}

Consequently, despite the non-orthogonality of $B_+$ and $B_-$, the
invertible congruence transformation $Q=[B_+\mid B_-]$ block-diagonalizes
both $L^\Delta_{\mathrm{reg},G}$ and $\widetilde L$:
\[
    Q^TL^\Delta_{\mathrm{reg},G}Q
    =
    \begin{pmatrix}
        B_+^TL^\Delta_{\mathrm{reg},G}B_+ & 0\\
        0 & B_-^TL^\Delta_{\mathrm{reg},G}B_-
    \end{pmatrix},
\]
and similarly for $\widetilde L$. Since $Q$ is invertible,
\[
    \det\!\bigl(Q^T(\lambda
    L^\Delta_{\mathrm{reg},G}-\widetilde L)Q\bigr)
    =
    \det(Q)^2
    \det\!\bigl(\lambda L^\Delta_{\mathrm{reg},G}-\widetilde L\bigr),
\]
so the generalized eigenvalues are unchanged. 
The trapezoidal stability analysis therefore splits into the two
$3\times3$ reflection sectors, to which the inverse-free
characteristic-polynomial method of Subsection~\ref{subssec:symmetry_reduced_pencils}
 below applies directly.

\begin{remark}
The $G$-regularization may be used uniformly. Any mass-preserving
symmetry satisfies $R^TGR=G$, and, whenever $R\bv=\pm\bv$, the
correction $G\bv\bv^TG/(\bv^TG\bv)$ is $R$-invariant. Hence
$L^\Delta_{\mathrm{reg},G}$ respects both the kite and trapezoidal
reflections. In the absence of symmetry  it remains symmetric positive definite,
so the generalized-eigenvalue reduction still applies.
\end{remark}

\subsection{The Symmetry-Reduced Stability Pencils}
\label{subssec:symmetry_reduced_pencils}

We now derive the stability criterion that applies uniformly to both the
kite and trapezoidal cases above. In either case, the bases $B_+$ and
$B_-$ span complementary subspaces $\mathcal C_+$ and $\mathcal C_-$ with
$B_+^TAB_-=0$ for $A\in\{L^\Delta_{\mathrm{reg}},\widetilde L\}$ (using
$L^\Delta_{\mathrm{reg},G}$ in place of $L^\Delta_{\mathrm{reg}}$ for the
trapezoidal case). Consequently, applying the invertible change-of-basis
matrix $Q = [B_+ \mid B_-]$ block-diagonalizes the regularized Hessian
matrices into independent $3 \times 3$  blocks:
\[
    Q^T L^\Delta_{\mathrm{reg}} Q =
    \begin{pmatrix}
        L^\Delta_+ & \mathbf{0} \\
        \mathbf{0} & L^\Delta_-
    \end{pmatrix},
    \qquad
    Q^T \widetilde{L} Q =
    \begin{pmatrix}
        \widetilde{L}_+ & \mathbf{0} \\
        \mathbf{0} & \widetilde{L}_-
    \end{pmatrix},
\]
where $L^\Delta_\pm = B_\pm^T L^\Delta_{\mathrm{reg}} B_\pm$ and
$\widetilde{L}_\pm = B_\pm^T \widetilde{L} B_\pm$. Because the
regularized Laplacian $L^\Delta_{\mathrm{reg}}$ is strictly positive
definite, both $L^\Delta_+$ and $L^\Delta_-$ are invertible $3\times3$
positive-definite matrices. Moreover, $\widetilde L$ has exactly rank
two by Corollary~\ref{cor:tildeL-eigenvalues}. The following two
lemmas show that this rank splits evenly between the two parity
sectors.

\begin{lemma}[Equipartition of the transverse eigenspace, kite case]
\label{lem:transverse-equipartition-kite}
For the kite reflection, $\widetilde L_+$ and $\widetilde L_-$ each
have rank exactly one.
\end{lemma}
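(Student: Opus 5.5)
Since $\widetilde L=\sigma\,\mathbf W\mathbf W^T$ with $\mathbf W=\mathbf w\otimes\mathbf I_2$ (Corollary~\ref{cor:ell-factorization}), we have
\[
\widetilde L_\pm=\sigma\,(B_\pm^T\mathbf W)(B_\pm^T\mathbf W)^T ,
\]
so $\operatorname{rank}\widetilde L_\pm=\operatorname{rank}(B_\pm^T\mathbf W)$. Moreover $Q^T\widetilde LQ$ is block diagonal, and $Q$ is invertible (orthogonal in the kite case). Hence $\operatorname{rank}\widetilde L_++\operatorname{rank}\widetilde L_-=\operatorname{rank}\widetilde L=2$. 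It therefore suffices to show that each of $B_+^T\mathbf W$ and $B_-^T\mathbf W$ is nonzero. Equivalently, $\operatorname{range}(\mathbf W)$ is not contained in $\mathcal C_-$ or in $\mathcal C_+$. In the kite case $B_\pm^T$ is the orthogonal projection coordinate map, since $P_\pm=B_\pm B_\pm^T$.

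First I determine the symmetry of $\mathbf w$. The kite relabeling $\sigma=(2\,4)$ gives $\alpha_{23}^\perp=\alpha_{34}^\perp$, $\alpha_{12}^\perp=\alpha_{14}^\perp$ and $m_2=m_4$. Thus $\ell$ commutes with the $3\times3$ permutation $\Pi$ swapping the first and third coordinates. Being rank one, $\ell$ has $\mathbf w$ as its unique nonzero eigenvector up to scale, so $\Pi\mathbf w=\pm\mathbf w$. The explicit form $\mathbf w=(1/\alpha_{34}^\perp,1/\alpha_{24}^\perp,1/\alpha_{23}^\perp)$ gives $w_1=w_3$, so $\Pi\mathbf w=\mathbf w$ when Corollary~\ref{cor:ell-factorization} applies. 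In general the even sign follows from the diagonal entries, because $\ell_{11}=w_1^2\sigma$ and $\ell_{33}=w_3^2\sigma$ only give $w_1=\pm w_3$.

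To avoid dependence on the nonvanishing hypotheses, I would argue directly from $R$: note that $R=\Pi\otimes\rho$. Then $R\mathbf W\mathbf c=(\Pi\mathbf w)\otimes\rho\mathbf c$. If $\Pi\mathbf w=\mathbf w$, then $R\mathbf W\mathbf c=\mathbf W\rho\mathbf c$. Taking $\mathbf c=\mathbf e_2$, the $+1$ eigenvector of $\rho$, gives $\mathbf W\mathbf e_2\in\mathcal C_+$. Taking $\mathbf c=\mathbf e_1$ gives $\mathbf W\mathbf e_1\in\mathcal C_-$. Both are nonzero since $\mathbf w\ne0$, so each sector receives a one-dimensional piece of $\operatorname{range}(\mathbf W)$. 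If instead $\Pi\mathbf w=-\mathbf w$, the roles of $\mathbf e_1$ and $\mathbf e_2$ swap, with the same conclusion. Either way, $\operatorname{range}(\mathbf W)=\operatorname{span}\{\mathbf W\mathbf e_1\}\oplus\operatorname{span}\{\mathbf W\mathbf e_2\}$ splits with one dimension in each sector.

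Then $B_+^T\mathbf W$ and $B_-^T\mathbf W$ each have rank exactly one. This holds because $B_\pm^T$ kills the opposite sector (by orthogonality, $B_+^TB_-=0$) and is injective on its own sector. Hence $\operatorname{rank}\widetilde L_\pm=1$.

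The main obstacle is establishing $\Pi\mathbf w=\pm\mathbf w$ cleanly without assuming $\alpha_{ij}^\perp\ne0$. I would do this by noting that $R^T\widetilde LR=\widetilde L$, which is the kite analogue of Proposition~\ref{prop:trapezoid-split-invariance} and is immediate from $\alpha^\perp_{\sigma(i)\sigma(j)}=\alpha^\perp_{ij}$. This gives $\Pi\ell\Pi=\ell$, and a rank-one symmetric matrix invariant under an involution has its spanning vector in an eigenspace of that involution.
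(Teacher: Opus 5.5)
Your proof is correct, but it reaches the conclusion differently from the paper. The paper computes $B_+^T\mathbf W$ and $B_-^T\mathbf W$ entrywise in terms of $p=(w_1-w_3)/\sqrt2$, $q=(w_1+w_3)/\sqrt2$ and $r=w_2$. It observes that one matrix is a row-and-column permutation of the other, so their ranks agree, and then gets rank one for each from $\operatorname{rank}\widetilde L_++\operatorname{rank}\widetilde L_-=2$. That argument uses no symmetry of $\mathbf w$, only the block-diagonality of $Q^T\widetilde LQ$.

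You instead use the tensor structure $R=\Pi\otimes\rho$ together with $\Pi\mathbf w=\pm\mathbf w$. You derive the latter correctly from $\Pi\ell\Pi=\ell$, which is the same as $R^T\widetilde LR=(\Pi\ell\Pi)\otimes\mathbf I_2=\widetilde L$. This shows that $\operatorname{range}(\mathbf W)$ splits into the lines spanned by $\mathbf w\otimes\mathbf e_1$ and $\mathbf w\otimes\mathbf e_2$, one in each sector. Combined with the orthogonality of $\mathcal C_+$ and $\mathcal C_-$ in the kite case, this gives rank exactly one for each $B_\pm^T\mathbf W$ directly, so your rank-additivity reduction is not actually needed.

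What your route buys is an intrinsic explanation of the paper's formulas. In the even case $w_1=w_3$, so $p=0$; the first column of $B_+^T\mathbf W$ and the second column of $B_-^T\mathbf W$ vanish, and $B_+^T\widetilde LB_-=0$ comes out as a byproduct. The paper's route is a shorter mechanical check tied to the specific bases $B_\pm$.

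Two minor points. First, your sentence saying that in general the even sign ``follows from the diagonal entries'' contradicts its own justification, since $\ell_{11}=\ell_{33}$ only yields $w_1=\pm w_3$. Drop it; your later argument handles both signs anyway. Second, the odd case $\Pi\mathbf w=-\mathbf w$ never occurs at a genuine central configuration. It forces $w_2=0$ and $w_3=-w_1\neq0$. Then the kernel relations $\ell x=\ell y=0$ from the proof of Proposition~\ref{prop:rank1} reduce to $w_1(\mathbf q_{12}-\mathbf q_{14})=\mathbf 0$, that is, $\mathbf r_2=\mathbf r_4$, a collision. Treating this case anyway is harmless.
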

\begin{proof}
By definition, $\widetilde L_\pm=B_\pm^T\widetilde LB_\pm$.
Substituting the factorization $\widetilde L=\sigma\mathbf W\mathbf
W^T$ from Corollary~\ref{cor:ell-factorization},
\[
    \widetilde L_\pm
    =B_\pm^T\bigl(\sigma\mathbf W\mathbf W^T\bigr)B_\pm=\sigma\bigl(B_\pm^T\mathbf W\bigr)\bigl(B_\pm^T\mathbf
    W\bigr)^T,
\]
which has the form $\sigma AA^T$ with $A:=B_\pm^T\mathbf
W\in\mathbb R^{3\times2}$. For any real matrix $A$, $\operatorname{rank}(AA^T)=\operatorname{rank}(A^T)=\operatorname{rank}(A)$.
As $\sigma\neq0$, this gives
\[
    \operatorname{rank}(\widetilde L_\pm)
    =\operatorname{rank}(B_\pm^T\mathbf W).
\]

Using \eqref{eq:reflection_adapted_bases} and $\mathbf W=\mathbf
w\otimes\mathbf I_2$, direct computation gives
\[
    B_+^T\mathbf W=
    \begin{pmatrix}p&0\\0&q\\0&r\end{pmatrix},
    \qquad
    B_-^T\mathbf W=
    \begin{pmatrix}q&0\\0&p\\r&0\end{pmatrix},
\]
where
\[
    p=\frac{w_1-w_3}{\sqrt2},\qquad
    q=\frac{w_1+w_3}{\sqrt2},\qquad
    r=w_2.
\]
Swapping the two columns of $B_+^T\mathbf W$, then swapping its first
two rows, produces exactly $B_-^T\mathbf W$. Since row and column
permutations preserve rank,
$\operatorname{rank}(B_+^T\mathbf W)=\operatorname{rank}(B_-^T\mathbf
W)=:r_0$. Since $\operatorname{rank}(\widetilde
L_+)+\operatorname{rank}(\widetilde L_-)=\operatorname{rank}(\widetilde
L)=2$ (Corollary~\ref{cor:tildeL-eigenvalues}), we get $2r_0=2$, so
$r_0=1$.
\end{proof}

\begin{lemma}[Equipartition of the transverse eigenspace, trapezoid
case]
\label{lem:transverse-equipartition-trapezoid}
For the trapezoidal reflection, $\widetilde L_+$ and $\widetilde L_-$
each have rank exactly one.
\end{lemma}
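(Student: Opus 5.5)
The plan is to follow the kite argument of Lemma~\ref{lem:transverse-equipartition-kite}. The only change is that the bases $B_\pm$ in \eqref{eq:trapezoid_adapted_bases} are not mutually orthogonal, so the rank-additivity step must use the invertibility of $Q=[B_+\mid B_-]$ rather than its orthogonality.

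\textbf{Step 1: reduce to $B_\pm^T\mathbf W$.} Substituting the factorization $\widetilde L=\sigma\mathbf W\mathbf W^T$ of Corollary~\ref{cor:ell-factorization} gives
\[
\widetilde L_\pm=\sigma\,(B_\pm^T\mathbf W)(B_\pm^T\mathbf W)^T .
\]
Since $\sigma\neq0$ and $\operatorname{rank}(AA^T)=\operatorname{rank}(A)$, this yields $\operatorname{rank}(\widetilde L_\pm)=\operatorname{rank}(B_\pm^T\mathbf W)$.

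\textbf{Step 2: compute $B_\pm^T\mathbf W$ and compare.} I will compute these products directly from \eqref{eq:trapezoid_adapted_bases}, using that $\mathbf W\mathbf c$ has blocks $w_1\mathbf c$, $w_2\mathbf c$, $w_3\mathbf c$ for $\mathbf c\in\mathbb R^2$. Reading off the columns of $B_+$ should give
\[
B_+^T\mathbf W=\begin{pmatrix}a&0\\ b&0\\ 0&d\end{pmatrix},
\qquad
B_-^T\mathbf W=\begin{pmatrix}0&a\\ 0&b\\ d&0\end{pmatrix},
\]
where
\[
a=\frac{w_1+w_2}{\sqrt2},\qquad b=\frac{w_1-w_2+2w_3}{\sqrt6},\qquad d=\frac{w_2+w_3}{\sqrt2}.
\]
The reason is that $B_-$ is obtained from $B_+$ by exchanging the $x$- and $y$-slots within each $2$-block, while $\mathbf W=\mathbf w\otimes\mathbf I_2$ treats the two slots identically. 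Hence $B_-^T\mathbf W$ is $B_+^T\mathbf W$ with its two columns swapped, and the two ranks coincide; call the common value $r_0$.

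\textbf{Step 3: rank additivity.} This is the step that needs care, since the kite proof tacitly used an orthogonal change of basis. By Proposition~\ref{prop:trapezoid-split-invariance} and Lemma~\ref{lem:parity-vanishing}, $B_+^T\widetilde LB_-=0$, so
\[
Q^T\widetilde LQ=\operatorname{diag}(\widetilde L_+,\widetilde L_-).
\]
Because $Q$ is invertible (though not orthogonal), congruence by $Q$ preserves rank. Corollary~\ref{cor:tildeL-eigenvalues} then gives
\[
\operatorname{rank}\widetilde L_++\operatorname{rank}\widetilde L_-=\operatorname{rank}\widetilde L=2 .
\]
Therefore $2r_0=2$, so $r_0=1$.

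I expect the main obstacle to be bookkeeping in Step~2: getting the entries of the non-orthogonal Gram--Schmidt basis right, so that the column-swap correspondence between $B_+^T\mathbf W$ and $B_-^T\mathbf W$ is genuinely exact. As a side consequence, the rank-additivity argument itself shows that $a,b$ are not both zero and $d\neq0$, although this is not needed for the conclusion.
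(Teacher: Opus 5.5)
Your proof is correct and follows the paper's argument step for step: the same reduction to $\operatorname{rank}(B_\pm^T\mathbf W)$, the same explicit $3\times2$ matrices related by a column swap, and the same count $2r_0=2$. The only difference is that you spell out the rank additivity through the block-diagonal congruence $Q^T\widetilde L Q=\operatorname{diag}(\widetilde L_+,\widetilde L_-)$ with $Q$ merely invertible, which the paper uses implicitly.

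One correction to your closing aside, which is false as stated (though, as you say, unused). Rank one of $\begin{pmatrix}a&0\\ b&0\\ 0&d\end{pmatrix}$ forces exactly one of its two columns to vanish, so $(a,b)\neq0$ and $d\neq0$ cannot both hold. In fact $d=0$: $\bq_{12}=\br_1-\rho\br_1$ is horizontal, while $\bq_{13}$ and $\bq_{14}$ share the same $y$-component, which is nonzero by non-collinearity. Hence the relation $w_1\bq_{12}+w_2\bq_{13}+w_3\bq_{14}=0$ forces $w_2+w_3=0$.
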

\begin{proof}
As in the proof of Lemma~\ref{lem:transverse-equipartition-kite},
$\widetilde L_\pm=\sigma(B_\pm^T\mathbf W)(B_\pm^T\mathbf W)^T$, so
\[
    \operatorname{rank}(\widetilde L_\pm)
    =\operatorname{rank}(B_\pm^T\mathbf W).
\]
Using \eqref{eq:trapezoid_adapted_bases} and $\mathbf W=\mathbf
w\otimes\mathbf I_2$, direct computation gives
\[
    B_+^T\mathbf W=
    \begin{pmatrix}a&0\\b&0\\0&c\end{pmatrix},
    \qquad
    B_-^T\mathbf W=
    \begin{pmatrix}0&a\\0&b\\c&0\end{pmatrix},
\]
where
\[
    a=\frac{w_1+w_2}{\sqrt2},\qquad
    b=\frac{w_1-w_2+2w_3}{\sqrt6},\qquad
    c=\frac{w_2+w_3}{\sqrt2}.
\]
Since $B_-^T\mathbf W$ is obtained from $B_+^T\mathbf W$ by swapping
its two columns, $\operatorname{rank}(B_+^T\mathbf
W)=\operatorname{rank}(B_-^T\mathbf W)=:r$. Since
$\operatorname{rank}(\widetilde L_+)+\operatorname{rank}(\widetilde
L_-)=\operatorname{rank}(\widetilde L)=2$ (Corollary
\ref{cor:tildeL-eigenvalues}), we get $2r=2$, so $r=1$.
\end{proof}

For both the kite and trapezoidal symmetry reductions, the preceding
lemmas imply that the projected matrices $\widetilde L_+$ and
$\widetilde L_-$ each have rank one. Consequently, the pencil matrix
$\lambda L^\Delta_\pm-\widetilde L_\pm$ is obtained from
$\lambda L^\Delta_\pm$ by a rank-one correction, namely
$-\widetilde L_\pm$. As we make precise below, this means each
$3\times3$ parity block can contribute at most one nonzero
generalized eigenvalue.

The full $6 \times 6$ generalized characteristic polynomial factors
by symmetry as
\[
    \det\bigl(\lambda L^\Delta_{\mathrm{reg}}-\widetilde L\bigr)
    =
    \det\bigl(\lambda L^\Delta_+-\widetilde L_+\bigr)
    \det\bigl(\lambda L^\Delta_- -\widetilde L_-\bigr).
\]
Consequently, the two nonzero physical generalized eigenvalues can be
computed separately from the two $3\times3$ rank-one-perturbation
problems, without forming any matrix inverses.

To evaluate these $3\times3$ determinants analytically, recall that for any
invertible $3\times3$ matrix $A$ and rank-one matrix $B$, the product
$A^{-1}B$ also has rank one. Its two-dimensional kernel guarantees that $0$ is an eigenvalue of
algebraic multiplicity at least two; since the three eigenvalues always
sum to the trace, the remaining eigenvalue must equal
$\operatorname{tr}(A^{-1}B)$. Its characteristic polynomial is therefore simply
$\det(\lambda \mathbf{I} - A^{-1}B) = \lambda^2(\lambda - \operatorname{tr}(A^{-1}B))$.
Factoring out $A$ and substituting $A^{-1} = \operatorname{adj}(A)/\det(A)$
yields the identity
\[
    \det(\lambda A - B)
    = \lambda^2 \left[ \lambda \det(A) - \operatorname{tr}(\operatorname{adj}(A)B) \right].
\]

Applying this identity to our $3\times3$  blocks with $A=L^\Delta_\pm$
and $B=\widetilde L_\pm$ yields
\[
    \det\bigl(\lambda L^\Delta_\pm-\widetilde L_\pm\bigr)
    =
    \lambda^2
    \left[
        \lambda\det(L^\Delta_\pm)
        -
        \operatorname{tr}
        \bigl(
            \operatorname{adj}(L^\Delta_\pm)\widetilde L_\pm
        \bigr)
    \right].
\]
Accordingly, the unique nonzero generalized eigenvalue in each parity sector is given by the explicit rational function 
\begin{equation}\label{eq:lambda_adjugate}
    \lambda_\pm
    =
    \frac{
        \operatorname{tr}
        \bigl(
            \operatorname{adj}(L^\Delta_\pm)\widetilde L_\pm
        \bigr)
    }{
        \det(L^\Delta_\pm)
    }.
\end{equation}
For the trapezoidal case, $L^\Delta_{\mathrm{reg},G}$ is understood
in place of $L^\Delta_{\mathrm{reg}}$ throughout.

Because \eqref{eq:lambda_adjugate} is expressed entirely in terms of
adjugate matrices and determinants, both its numerator and denominator
are polynomial expressions in the original matrix entries. The
rotational direction is excluded from
$\mathcal W=\operatorname{span}(\mathbf v)^\perp$ by definition; its
contribution has instead been absorbed into the rank-one correction
defining $L^\Delta_{\mathrm{reg}}$, or
$L^\Delta_{\mathrm{reg},G}$ in the trapezoidal case. The scaling
direction contributes a positive eigenvalue to $H_{\mathcal C}$
by Lemma~\ref{lem:dilation-positive}. Consequently, strict local
minimality of the central configuration is equivalent to positive
definiteness of
\[
    H_{\mathcal C}|_{\mathcal W}
    =
    L^\Delta|_{\mathcal W}+\widetilde L|_{\mathcal W}.
\]
Since $\lambda_+$ and $\lambda_-$ are precisely the two nonzero
generalized eigenvalues of the pencil
$(\widetilde L,L^\Delta_{\mathrm{reg}})$---equivalently, the two
nonzero eigenvalues of $\Omega$ in
Proposition~\ref{prop:coordinate-free-reduction}---that proposition
implies that strict local minimality holds if and only if
\[
    \lambda_+>-1
    \qquad\text{and}\qquad
    \lambda_->-1.
\]

\section{The Rhombus Family}
\label{sec:rhombus}
In this section, we apply the general theory of Section \ref{sec:four-body-problem}, together with its specialization to configurations invariant under a reflection symmetry developed in Section \ref{sec:reflection_symmetry}, to the family of rhombus central configurations. The rhombus is a convex, reflection-symmetric planar four-body configuration with equal opposite masses and perpendicular diagonals.

The existence and uniqueness of these configurations were established in \cite{corbera2014central} and \cite{long2002four}, respectively, while their nondegeneracy was recently proved in \cite{sun2025degeneracy}. Here, we revisit this configuration from the perspective developed above, and show that the index of the rhombus configuration is zero. 
Since the Morse index is the number of negative eigenvalues of the
Hessian on the normalized realizability subspace, counted with
multiplicity, this proves that the rhombus is a nondegenerate local
minimum modulo rotations. This gives an independent algebraic proof of nondegeneracy within the
pair-space framework developed in this paper.

Consider four bodies with opposite masses equal,
\[
    m_1=m_3=m>0,
    \qquad
    m_2=m_4=1,
    \qquad
    M=2m+2,
\]
placed at the vertices of a rhombus
\begin{equation}
\label{eq:rhombus-coordinates}
    \mathbf{r}_1=(0,a),\qquad
    \mathbf{r}_2=(-1,0),\qquad
    \mathbf{r}_3=(0,-a),\qquad
    \mathbf{r}_4=(1,0),
    \qquad a>0.
\end{equation}
The center of mass is automatically at the origin. The vertical
diagonal has length \(2a\) and joins the two bodies of mass \(m\); the
horizontal diagonal has length \(2\) and joins the two bodies of mass
\(1\).
When convenient for later algebraic simplifications, we write
\begin{equation}
\label{eq:rhombus-side-length}
s=\sqrt{a^2+1},
\end{equation}
the common length of the four sides of the rhombus.

\subsection{Pair vectors and the central-configuration equation}
\label{subsec:rhombus-cc}

The independent pair vectors based at body \(1\) are
\[
    \mathbf{q}_{12}=(1,a),
    \qquad
    \mathbf{q}_{13}=(0,2a),
    \qquad
    \mathbf{q}_{14}=(-1,a),
\]
and the dependent pairs, obtained from the triangle relations
\(\mathbf{q}_{12}+\mathbf{q}_{2j}-\mathbf{q}_{1j}=0\), are
\[
    \mathbf{q}_{23}=(-1,a),
    \qquad
    \mathbf{q}_{24}=(-2,0),
    \qquad
    \mathbf{q}_{34}=(-1,-a).
\]
Thus \(q_{12}=q_{14}=q_{23}=q_{34}=s\), \(q_{13}=2a\), \(q_{24}=2\),
and the corresponding mass parameters are
\begin{equation}
\label{eq:rhombus-mu}
    \mu_{12}=\mu_{14}=\mu_{23}=\mu_{34}=\frac{m}{M},
    \qquad
    \mu_{13}=\frac{m^2}{M},
    \qquad
    \mu_{24}=\frac1M.
\end{equation}
Following the general construction of
Section~\ref{sec:pairspace}, form the augmented functional
\(\mathcal F\) defined in \eqref{eq:F_4body},
\[
	\mathcal F= U+ \frac{ \omega^2}{ 2}I+\sum\boldsymbol \phi_{ij}\cdot(\text{triangle relation})_{ij},
\]
on the ambient pair space, with one vector multiplier
\(\boldsymbol\phi_{ij}\in\mathbb{R}^2\) for each dependent pair
\(ij\in\{23,24,34\}\). Setting \(\nabla \mathcal{F} =0\) and substituting the
rhombus data~\eqref{eq:rhombus-coordinates}--\eqref{eq:rhombus-mu}
yields a linear system in \(\omega^2\) and the six multiplier
components.
Eliminating these seven unknowns reduces the twelve
pair-space equations to a single nontrivial scalar equation, the {\bf rhombus central-configuration equation}
\begin{equation}
\label{eq:rhombus-cc-equation}
    \Phi(a,m)
    =
    \frac{
        m\Bigl[(a^2+1)^{3/2}\bigl(a^3-am\bigr)+8a^3(m-1)\Bigr]
    }{
        4(a^2+1)^{3/2}a^2
    }
    =0.
\end{equation}
The remaining equations are automatically satisfied by the rhombus
symmetry.

Since \(m>0\), the central-configuration equation is equivalent to the
vanishing of the  bracketed numerator of equation~\eqref{eq:rhombus-cc-equation}. Solving the resulting equation
for \(m\) gives
\begin{equation}
\label{eq:rhombus-m-of-a}
    m(a)
    =
    \frac{
        a^2\bigl[(a^2+1)^{3/2}-8\bigr]
    }{
        (a^2+1)^{3/2}-8a^2
    }.
\end{equation}
This expression satisfies \(m(1)=1\), corresponding to the square
configuration, and \(m(a)>0\) for
\(a\in\left(\sqrt{3}/3,\sqrt{3}\right)\).
\subsection{The reduced Hessian and its Laplacian splitting}
\label{subsec:rhombus-hessian}

Let \(\mathbf{u}=(\mathbf{q}_{12},\mathbf{q}_{13},\mathbf{q}_{14})\in
\mathbb{R}^{6}\) be the set of independent pair vectors. 
Following the general construction of Section~\ref{sec:hessian}, the restricted Hessian
\(H_{\mathcal C}(a,m)\) is evaluated at the rhombus configuration
\eqref{eq:rhombus-coordinates}--\eqref{eq:rhombus-mu}, prior to
imposing the central-configuration relation~\eqref{eq:rhombus-cc-equation}.

Rotational invariance of the functional $F$ implies that the tangent vector
to the \(SO(2)\)-orbit through a central configuration lies in the kernel
of the restricted Hessian. At the rhombus, this infinitesimal rotation
vector is
\begin{equation}
\label{eq:rhombus-rotation-vector}
    \mathbf{v}=(-a,1,-2a,0,-a,-1)^{T},
\end{equation}
and hence \(H_{ \mathcal{C}}(a,m)\mathbf{v}=0\) whenever
\eqref{eq:rhombus-cc-equation} holds.

Following the framework of Section~\ref{subsec:hessian_splitting}, the restricted Hessian splits into two distinct components:
\[
H_{\mathcal{C} } = L^\Delta + \widetilde L, \qquad \text{where} \qquad \widetilde L = \ell \otimes \mathbf{I}_2.
\]

The gap Laplacian \(L^\Delta\) has the rotational vector
\(\mathbf v\) in its kernel. Following the generalized eigenvalue
criterion of Section~\ref{subsec:rayleigh_bound}, we remove this
degeneracy by setting
\[
    L^\Delta_{\mathrm{reg}}
    =
    L^\Delta+
    \frac{\mathbf v\mathbf v^T}{\mathbf v^T\mathbf v}.
\]
The resulting matrix is positive definite on the full pair space.
For the rhombus, it has the form
\begin{equation} \label{eq:L_reg_matrix}
L^{\Delta}_{\text{reg}} = \begin{bmatrix}
L_{11} & L_{12} & L_{13} & L_{14} & L_{15} & -L_{12} \\
L_{12} & L_{22} & L_{23} & L_{24} & L_{12} & L_{26} \\
L_{13} & L_{23} & L_{33} & 0 & L_{13} & -L_{23} \\
L_{14} & L_{24} & 0 & L_{44} & -L_{14} & L_{24} \\
L_{15} & L_{12} & L_{13} & -L_{14} & L_{11} & -L_{12} \\
-L_{12} & L_{26} & -L_{23} & L_{24} & -L_{12} & L_{22}
\end{bmatrix},
\end{equation}
where the  rational functions $L_{ij}$ are given in Appendix \ref{appendix-LDelta}. The reduced transverse Laplacian is explicitly given by
\begin{equation}\label{eq:widetildeL-rhombus}
\widetilde L = \ell \otimes \mathbf{I}_2, \qquad \ell = \sigma \begin{pmatrix}
 1 & -1 &  1 \\
-1 &  1 & -1 \\
 1 & -1 &  1
\end{pmatrix}, \qquad \sigma = \frac{\left(s a^2 + s - 8\right)^2 a^3}{8 \left(\left(a^5 + a^3 + a^2 + 1\right) s - 16 a^3\right) s^3},
\end{equation} 
where  $s = \sqrt{a^2+1}$ denotes the common diagonal distance.

The rhombus is invariant under the reflection $(x,y)\mapsto(-x,y)$. As in
Section~\ref{sec:reflection_symmetry}, this reflection induces a symmetric orthogonal
involution $R$ on the pair space $\mathcal C\simeq\mathbb R^6$, which
commutes with $H_{\mathcal C}$, $L^\Delta$, and $\widetilde L$. Hence
$\mathcal C$ splits orthogonally into the symmetric and antisymmetric
sectors introduced in \eqref{eq:reflection_invariance}--\eqref{eq:R_axial_action}:
\[
    \mathcal C=\mathcal C_+\oplus\mathcal C_-,
    \qquad
    \dim\mathcal C_+=\dim\mathcal C_-=3.
\]

By projecting our matrices onto these invariant subspaces  using the adapted bases $B_+$ and $B_-$ from \eqref{eq:reflection_adapted_bases}, the regularized constraint matrix $L^\Delta_{\text{reg}}$ block-diagonalizes into the two $3\times 3$ symmetry-reduced matrices
\begin{align*}
L^\Delta_+ = B_+^T L^\Delta_{\text{reg}} B_+ &= \begin{pmatrix}
L_{11} - L_{15} & 0 & \sqrt{2} L_{14} \\[0.5em]
0 & L_{22} + L_{26} & \sqrt{2} L_{24} \\[0.5em]
\sqrt{2} L_{14} & \sqrt{2} L_{24} & L_{44}
\end{pmatrix}, \\[1em]
L^\Delta_- = B_-^T L^\Delta_{\text{reg}} B_- &= \begin{pmatrix}
L_{11} + L_{15} & 2 L_{12} & \sqrt{2} L_{13} \\[0.5em]
2 L_{12} & L_{22} - L_{26} & \sqrt{2} L_{23} \\[0.5em]
\sqrt{2} L_{13} & \sqrt{2} L_{23} & L_{33}
\end{pmatrix}.
\end{align*}

Similarly, the transverse Laplacian reduces to \(\widetilde L_\pm=B_\pm^T\widetilde L B_\pm\)
and these reduced matrices are determined entirely by the scalar
factor \(\sigma\):
\begin{equation*}
\widetilde{L}_+ = \begin{pmatrix}
0 & 0 & 0 \\
0 & 2\sigma & -\sqrt{2}\sigma \\
0 & -\sqrt{2}\sigma & \sigma
\end{pmatrix}, \qquad 
\widetilde{L}_- = \begin{pmatrix}
2\sigma & 0 & -\sqrt{2}\sigma \\
0 & 0 & 0 \\
-\sqrt{2}\sigma & 0 & \sigma
\end{pmatrix}.
\end{equation*}
These symmetric reductions dramatically simplify the spectral analysis. Substituting the symmetry-reduced matrices \(L_{+}^\Delta\), \(L_{-}^\Delta\), and \(\widetilde L_{\pm}\) into the general determinant formula \eqref{eq:lambda_adjugate} yields closed-form expressions for the two eigenvalues \(\lambda_\pm(a)\). On the reflection-even sector \(\mathcal C_+\), we obtain:
\begin{equation} \label{eq:rhombus-lambda+}
    \lambda_{+}(a) = \frac{\operatorname{tr}\bigl(\operatorname{adj}(L_{+}^{\Delta})\,\widetilde L_{+}\bigr)}{\det\bigl(L_{+}^{\Delta}\bigr)} = -\frac{\left(a^{2} s+s-8\right) \left(8 a^{3}-a^{2} s-s\right) \left(a^{2}+1\right)}{24 \left(\left(a^{5}+a^{3}+a^{2}+1\right) s-16 a^{3}\right) a^{2}}.
\end{equation} 
On the reflection-odd sector \(\mathcal C_-\), we obtain:
\begin{equation}
\label{eq:rhombus-lambda-}
    \lambda_{-}(a) = \frac{\operatorname{tr}\bigl(\operatorname{adj}(L_{-}^{\Delta})\,\widetilde L_{-}\bigr)}{\det\bigl(L_{-}^{\Delta}\bigr)} = \frac{16 s (s^3 - 8)^2 (8a^3 - s^3) \big[ P_1(a) s + P_2(a) s^4 \big]}{Q_1(a) s^3 + Q_2(a)},
\end{equation}
where the numerator polynomials $P_1(a)$ and $P_2(a)$ are defined as:
\begin{align*}
P_1(a) &= -\frac{1}{16} \Big( 65a^{15} + 198a^{13} + 449a^{12} + 207a^{11} + 1350a^{10} + 8532a^9 + 1359a^8 \\
&\quad + 783a^7 + 468a^6 + 774a^5 + 15a^4 + 257a^3 + 6a^2 + 1 \Big), \\[1em]
P_2(a) &= \frac{1}{2} (2a^{12} + 6a^{10} + 197a^9 + 6a^8 + 15a^7 + 323a^6 + 15a^5 + 3a^4 + 5a^3 + 3a^2 + 1),
\end{align*}
and the denominator polynomials $Q_1(a)$ and $Q_2(a)$ are compactly factored as:
\begin{align*}
Q_1(a) &= -384(a^3 + 1)(a^6 + 3a^4 + 192a^3 + 3a^2 + 1) \\
&\quad \times (a^{12} + 3a^{10} + 4a^9 + 3a^8 + 12a^7 + 130a^6 + 12a^5 + 3a^4 + 4a^3 + 3a^2 + 1), \\[1em]
Q_2(a) &= 24 \Big( 65a^{24} + 393a^{22} + 898a^{21} + 996a^{20} + 5394a^{19} + 56533a^{18} + 13512a^{17} \\
&\quad + 172359a^{16} + 158248a^{15} + 189090a^{14} + 436860a^{13} + 1193768a^{12} + 436860a^{11} \\
&\quad + 189090a^{10} + 158248a^9 + 172359a^8 + 13512a^7 + 56533a^6 + 5394a^5 \\
&\quad + 996a^4 + 898a^3 + 393a^2 + 65 \Big).
\end{align*}
Although the expression for \(\lambda_-(a)\) is more involved, it admits
a tractable sign structure once the radical \(\sqrt{a^2+1}\) is removed
by a suitable rational parametrization. Together with the analogous
reduction for \(\lambda_+(a)\), this parametrization lets us prove
positivity of the rhombus Hessian on the rotation-free subspace in the
next proposition.

\begin{proposition}
\label{prop:rhombus-index}
For every rhombus central configuration with shape parameter
\(a\in\left(\sqrt{3}/3,\sqrt3\right)\)
and mass \(m=m(a)\) satisfying~\eqref{eq:rhombus-cc-equation}, the
restricted Hessian $H_{\mathcal{C}}$ is positive definite on the
rotation-free subspace. Consequently, the Morse index of the rhombus
central configuration is zero.
\end{proposition}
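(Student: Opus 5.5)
By the criterion at the end of Subsection~\ref{subssec:symmetry_reduced_pencils}, $H_{\mathcal C}|_{\mathcal W}$ is positive definite if and only if $\lambda_+(a)>-1$ and $\lambda_-(a)>-1$. Here $\lambda_\pm$ are the explicit rational functions \eqref{eq:rhombus-lambda+} and \eqref{eq:rhombus-lambda-}, obtained from the kite reduction with $B_\pm$ as in \eqref{eq:reflection_adapted_bases}. Lemma~\ref{lem:transverse-equipartition-kite} guarantees that each sector carries exactly one nonzero generalized eigenvalue. So the proof reduces to two one-variable inequalities on the interval $a\in(\sqrt3/3,\sqrt3)$. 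The Morse index statement then follows from Lemma~\ref{lem:dilation-positive} and the discussion after Proposition~\ref{prop:negative-index-total}: the dilation direction is positive, so $n_-(H_{\mathcal C})=0$, and there is no kernel beyond the rotational mode.

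\textbf{Step 1 (remove the radical).} Set $s=\sqrt{a^2+1}$ and use the rational parametrization $a=2t/(1-t^2)$, $s=(1+t^2)/(1-t^2)$ with $t\in(0,1)$. The interval $(\sqrt3/3,\sqrt3)$ corresponds to $t\in(2-\sqrt3,\,1/\sqrt3)$, since $a=\tan\theta$ with $\theta\in(\pi/6,\pi/3)$ and $t=\tan(\theta/2)$. After clearing the positive powers of $1-t^2$, each of $\lambda_\pm+1$ becomes a rational function $N_\pm(t)/D_\pm(t)$ with integer polynomial numerator and denominator.

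\textbf{Step 2 ($\lambda_+$).} This is the easy sector. The factor $a^2s+s-8=s^3-8$ is negative on the interval, since $s^3<8$ iff $a<\sqrt3$. The factor $8a^3-s^3$ is positive, since $2a>s$ iff $a>1/\sqrt3$. The denominator combination $(a^5+a^3+a^2+1)s-16a^3=(a^3+1)s^3-16a^3$ has a sign I will fix directly: it is the same quantity appearing in the denominator of $\sigma$, and $\det L^\Delta_+>0$ constrains it. With these signs in hand, I show $\lambda_+>-1$ by bounding $24|\cdots|a^2$ against the numerator, either directly in the variables $(a,s)$ or via Sturm's theorem on $N_+(t)$ after the substitution.

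\textbf{Step 3 ($\lambda_-$) — the main obstacle.} The expression \eqref{eq:rhombus-lambda-} involves polynomials of degree up to $24$. I plan the following:
\begin{itemize}
\item Write $\lambda_-+1=\bigl(16s(s^3-8)^2(8a^3-s^3)[P_1s+P_2s^4]+Q_1s^3+Q_2\bigr)/(Q_1s^3+Q_2)$.
\item Establish the sign of the denominator. This should follow because it is proportional to $\det L^\Delta_->0$ times positive factors.
\item After the $t$-substitution, prove that the numerator polynomial has no roots in $(2-\sqrt3,1/\sqrt3)$. I will first try a certified Sturm sequence, or Descartes' rule after the Möbius shift $t\mapsto$ endpoint-mapped variable, and evaluate the sign at one interior point such as $t=\sqrt2-1$ (the square, $a=1$). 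The known value $\lambda_0\approx-0.215$ there provides the check.
\end{itemize}
A possible simplification is to note that $\lambda_-$ vanishes to second order at $a=\sqrt3$ and changes sign at $a=1/\sqrt3$ through the factor $8a^3-s^3$. This suggests $|\lambda_-|$ is small near both endpoints, so a crude bound such as $|\lambda_-|<1$ may suffice. Such a bound could be proved by splitting the interval into a few subintervals with interval-arithmetic-style monotone estimates on $P_i$ and $Q_i$.
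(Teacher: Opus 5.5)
Your plan is essentially the paper's proof. You remove the radical by a half-angle substitution: your $a=2t/(1-t^2)$ is the paper's $a=(1-t^2)/(2t)$ composed with $t\mapsto(1-t)/(1+t)$, and it gives the same $t$-interval $(2-\sqrt3,1/\sqrt3)$. You then use Sturm sequences to show that the numerator and denominator of $1+\lambda_\pm$ have no roots there, and fix their signs at one interior point.

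The one loose step is the sign of the denominators. Appealing to $\det L^\Delta_\pm>0$ does not by itself determine the sign of the simplified denominators, so that justification does not work as stated. After simplification (the paper's $D_\pm$), the only sign-carrying factor is $(a^3+1)s^3-16a^3$. It is negative on the interval because it has the sign of $\sigma=\alpha^\perp_{23}<0$. Alternatively, simply include the denominators in the Sturm check, as the paper does.
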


\begin{proof}
Both \(\lambda_+(a)\) and \(\lambda_-(a)\) are algebraic functions of
\(a\) involving the radical \(\sqrt{a^2+1}\). We remove it using the
rational parametrization
\begin{equation}
\label{eq:rhombus-parametrization}
    a=\frac{1-t^2}{2t},
    \qquad
    \sqrt{a^2+1}=\frac{1+t^2}{2t},
    \qquad t>0.
\end{equation}
Since \(a(t)\) is strictly decreasing for \(t>0\), the interval
\(a\in\bigl(\sqrt3/3,\sqrt3\bigr)\) corresponds exactly to
\(t\in\bigl(2-\sqrt3,\sqrt3/3\bigr)\), with
\[
a(2-\sqrt3)=\sqrt3,
\qquad
a\Bigl(\tfrac{\sqrt3}{3}\Bigr)=\frac{\sqrt3}{3}.
\]
It therefore suffices to prove positivity for
\(t\in\bigl(2-\sqrt3,\tfrac{\sqrt3}{3}\bigr)\).

Under~\eqref{eq:rhombus-parametrization}, the quantity
\(1+\lambda_{+}(a)\) becomes the rational function
\[
\label{eq:rhombus-rational-eta+}
    1+\lambda_{+}(t)=\frac{N_+(t)}{D_+(t)},
\]
where
\begin{align*}
N_+(t)={}&
15t^{16}-72t^{14}-2688t^{13}-60t^{12}+14976t^{11}+136t^{10}
-30720t^{9}\\
&\quad -70t^{8}+32768t^{7}-248t^{6}-14720t^{5}+4t^{4}+2432t^{3}
+56t^{2}-17,
\\[1ex]
D_+(t)={}&
24\bigl(t^{12}-136t^{9}-3t^{8}+360t^{7}-408t^{5}+3t^{4}+120t^{3}-1\bigr)
(t+1)^2(t-1)^2.
\end{align*}

A direct Sturm-sequence computation shows that \(N_{+}\) and \(D_{+}\)
each have the same number of sign variations at the two endpoints
\(2-\sqrt3\) and \(\sqrt3/3\); hence neither polynomial has a root in
\(\bigl(2-\sqrt3,\sqrt3/3\bigr)\), and each therefore has constant
sign throughout the interval.

Since
\[
    N_+\Bigl(\frac13\Bigr)=\frac{458104064}{14348907}>0,
    \qquad
    D_+\Bigl(\frac13\Bigr)=\frac{533233664}{14348907}>0,
\]
it follows that \(1+\lambda_+(t)>0\) throughout the interval, and
hence \(1+\lambda_+(a)>0\) for all
\(a\in\bigl(\sqrt3/3,\sqrt3\bigr)\).

Similarly, the quantity \(1+\lambda_{-}(a)\) becomes
\[
    1+\lambda_{-}(t)=\frac{N_-(t)}{D_-(t)},
\]
where
\begin{align*}
N_-(t)={}&
-9t^{16}+72t^{14}+576t^{13}-12t^{12}-13248t^{11}-296t^{10}
+34176t^{9}\\
&\quad -70t^{8}-37504t^{7}+184t^{6}+12352t^{5}-44t^{4}-448t^{3}
-88t^{2}+7,
\\[1ex]
D_-(t)={}&
96\,t^{2}\bigl(t^{12}-136t^{9}-3t^{8}+360t^{7}-408t^{5}+3t^{4}
+120t^{3}-1\bigr).
\end{align*}
Again, Sturm's theorem shows that \(N_-(t)\) and \(D_-(t)\) have no
roots in \(\bigl(2-\sqrt3,\sqrt3/3\bigr)\). Since
\[
    N_-\Bigl(\frac13\Bigr)=\frac{74938112}{4782969}>0,
    \qquad
    D_-\Bigl(\frac13\Bigr)=\frac{33327104}{1594323}>0,
\]
it follows that \(1+\lambda_-(t)>0\) throughout the interval, and
hence \(1+\lambda_-(a)>0\) for all
\(a\in\bigl(\sqrt3/3,\sqrt3\bigr)\).

Thus \(1+\lambda_+(a)>0\) and \(1+\lambda_-(a)>0\) throughout
\(\bigl(\sqrt3/3,\sqrt3\bigr)\). By the Generalized Eigenvalue Criterion
of Section~\ref{subsec:rayleigh_bound}, the restriction of
\(H_{\mathcal C}\) to \(\mathcal W\) is positive definite. Since
\(\mathbf v\in\ker H_{\mathcal C}\), it follows that \(H_{\mathcal C}\)
is positive semidefinite on all of \(\mathcal C\), with kernel exactly
\(\operatorname{span}\{\mathbf v\}\). In particular, the Morse index of
the rhombus central configuration is zero.
\end{proof}

\section*{Acknowledgements}

During the preparation of this manuscript, the author used the Perplexity AI
web platform for assistance with language and exposition. The service
provided access to multiple generative language models, whose individual
versions were not recorded. The author critically reviewed and revised all
AI-assisted material and takes full responsibility for the final content of
the manuscript.
\begin{appendices}

\section{Matrix-weighted graph Laplacians}
\label{appendix:matrix_laplacians}
For completeness, we record the standard positivity identity for
matrix-weighted graph Laplacians used in the proof of
Proposition~\ref{prop:LDelta_psd}.
\begin{lemma}[Positive Semidefiniteness of Matrix-Weighted Laplacians]
\label{lem:matrix_weighted_laplacian_psd}
Let $G=(V,E)$ be a finite graph on vertices $\{1,\dots,n\}$. Suppose each edge
$\{i,j\}\in E$ is assigned a symmetric positive semidefinite matrix
$\mathbf W_{ij}\in\mathbb R^{d\times d}$, with $\mathbf W_{ij}=\mathbf W_{ji}$.
Let $L_G$ be the associated matrix-weighted graph Laplacian, defined blockwise by
\[
(L_G)_{ii}=\sum_{j\neq i}\mathbf W_{ij},
\qquad
(L_G)_{ij}=-\mathbf W_{ij}\quad (i\neq j),
\]
where $\mathbf W_{ij}=0$ if $\{i,j\}\notin E$. Then $L_G$ is positive semidefinite.
More precisely, for every $\mathbf x=(\mathbf x_1,\dots,\mathbf x_n)\in\mathbb R^{dn}$,
\begin{equation}\label{eq:matrix_laplacian_qf}
\mathbf x^T L_G \mathbf x
=
\sum_{\{i,j\}\in E}(\mathbf x_i-\mathbf x_j)^T\mathbf W_{ij}(\mathbf x_i-\mathbf x_j)
\ge 0.
\end{equation}
Consequently, every reduced matrix-weighted Laplacian obtained by deleting
the block of any single vertex is also positive semidefinite, since the
vertices may be relabeled arbitrarily.
\end{lemma}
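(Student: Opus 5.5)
We prove the identity \eqref{eq:matrix_laplacian_qf} by expanding the quadratic form edge by edge. Positive semidefiniteness and the statement about reduced Laplacians then follow at once.

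\textbf{Step 1: split $L_G$ into edge summands.} For each edge $\{i,j\}\in E$ define the elementary Laplacian $L_{ij}:=E_{ij}\otimes \mathbf W_{ij}$, with $E_{ij}=(\mathbf e_i-\mathbf e_j)(\mathbf e_i-\mathbf e_j)^T$ as in the proof of Proposition~\ref{prop:trapezoid-split-invariance}. The entry computation for $E_{ij}$ given there shows the following. Block $(k,k)$ of $L_{ij}$ equals $\mathbf W_{ij}$ when $k\in\{i,j\}$ and $0$ otherwise. Block $(k,l)$ with $k\neq l$ equals $-\mathbf W_{ij}$ when $\{k,l\}=\{i,j\}$ and $0$ otherwise. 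Summing over $E$ and using the convention $\mathbf W_{ij}=0$ for non-edges reproduces the blockwise definition of $L_G$. Hence
\[
L_G=\sum_{\{i,j\}\in E}E_{ij}\otimes\mathbf W_{ij}.
\]

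\textbf{Step 2: evaluate each summand.} Write $\mathbf e_i-\mathbf e_j$ for the column vector. By the mixed-product property,
\[
E_{ij}\otimes\mathbf W_{ij}=\bigl((\mathbf e_i-\mathbf e_j)\otimes\mathbf I_d\bigr)\,\mathbf W_{ij}\,\bigl((\mathbf e_i-\mathbf e_j)\otimes\mathbf I_d\bigr)^T.
\]
For any $\mathbf x$ we have $((\mathbf e_i-\mathbf e_j)\otimes\mathbf I_d)^T\mathbf x=\mathbf x_i-\mathbf x_j$. Therefore
\[
\mathbf x^T(E_{ij}\otimes\mathbf W_{ij})\mathbf x=(\mathbf x_i-\mathbf x_j)^T\mathbf W_{ij}(\mathbf x_i-\mathbf x_j).
\]
Summing over the edges gives \eqref{eq:matrix_laplacian_qf}. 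Each term is nonnegative because $\mathbf W_{ij}\succeq0$, so $L_G\succeq0$.

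\textbf{Step 3: reduced Laplacians.} Deleting the block of vertex $k$ yields the principal submatrix $S_k^TL_GS_k$, where $S_k$ embeds $\mathbb R^{d(n-1)}$ into $\mathbb R^{dn}$ by inserting $\mathbf x_k=0$. This is exactly the grounding argument used in Proposition~\ref{prop:LDelta_psd}. For any $\mathbf y$,
\[
\mathbf y^TS_k^TL_GS_k\mathbf y=(S_k\mathbf y)^TL_G(S_k\mathbf y)\ge0,
\]
so every reduced Laplacian is positive semidefinite.

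The only step that requires care is Step~1: checking that the diagonal blocks collect exactly the incident edge weights and that the off-diagonal signs match. The $E_{ij}$ entry table from the earlier proof settles this directly, so no step presents a real obstacle.
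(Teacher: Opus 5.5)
Your proof is correct. It reaches the same per-edge identity as the paper, but it splits the matrix rather than the scalar sum. The paper expands $\mathbf x^TL_G\mathbf x$ into diagonal and off-diagonal block sums, pairs the ordered terms $(i,j)$ and $(j,i)$ of each edge using $\mathbf W_{ij}=\mathbf W_{ji}$, and completes the square. You instead write $L_G=\sum_{\{i,j\}\in E}E_{ij}\otimes\mathbf W_{ij}$ and factor each summand as the congruence $T_{ij}\mathbf W_{ij}T_{ij}^T$, with $T_{ij}=(\mathbf e_i-\mathbf e_j)\otimes\mathbf I_d$. Stacking the $T_{ij}$ gives $L_G=\mathcal T\,\operatorname{blkdiag}(\mathbf W_{ij})\,\mathcal T^T$ for a block incidence matrix $\mathcal T$. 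Positive semidefiniteness then appears structurally, as a congruence of a block-diagonal positive semidefinite matrix. This is the same mechanism by which the paper obtains $H_{\mathcal C}=B^T(D^2_{\mathcal P}F)B$ in Section~\ref{sec:hessian}, and your route exposes that connection. The paper's expansion, by contrast, is more elementary and self-contained, since it needs no Kronecker calculus. Your appeal to the $E_{ij}$ entry table from the proof of Proposition~\ref{prop:trapezoid-split-invariance} is not circular, even though this lemma is already used in Proposition~\ref{prop:LDelta_psd}, because that table is an elementary computation independent of the lemma; still, it is cleaner to state it inline. Your Step~3 is the paper's grounding argument.
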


\begin{proof}
Expanding the quadratic form blockwise gives
\[
\mathbf x^T L_G \mathbf x
=
\sum_{i=1}^n \mathbf x_i^T\Bigl(\sum_{j\neq i}\mathbf W_{ij}\Bigr)\mathbf x_i
-\sum_{\substack{i,j=1\\i\neq j}}^n \mathbf x_i^T\mathbf W_{ij}\mathbf x_j.
\]
Grouping together the two ordered pairs $(i,j)$ and $(j,i)$ associated with each
unordered edge $\{i,j\}\in E$, and using the symmetry $\mathbf W_{ij}=\mathbf W_{ji}$,
we obtain
\[
\mathbf x^T L_G \mathbf x
=
\sum_{\{i,j\}\in E}
\Bigl(
\mathbf x_i^T\mathbf W_{ij}\mathbf x_i
+\mathbf x_j^T\mathbf W_{ij}\mathbf x_j
-2\,\mathbf x_i^T\mathbf W_{ij}\mathbf x_j
\Bigr).
\]
Each summand is
\[
\mathbf x_i^T\mathbf W_{ij}\mathbf x_i
+\mathbf x_j^T\mathbf W_{ij}\mathbf x_j
-2\,\mathbf x_i^T\mathbf W_{ij}\mathbf x_j
=
(\mathbf x_i-\mathbf x_j)^T\mathbf W_{ij}(\mathbf x_i-\mathbf x_j),
\]
which is nonnegative because $\mathbf W_{ij}\succeq 0$. This proves
Eq.~\eqref{eq:matrix_laplacian_qf}, hence $L_G\succeq 0$.

Now let $\widetilde L_G$ be the reduced Laplacian obtained by deleting, say, the
first vertex block. For any $\mathbf y=(\mathbf y_2,\dots,\mathbf y_n)\in\mathbb R^{d(n-1)}$,
define $\mathbf x=(0,\mathbf y_2,\dots,\mathbf y_n)\in\mathbb R^{dn}$. Then
\[
\mathbf y^T \widetilde L_G \mathbf y
=
\mathbf x^T L_G \mathbf x
\ge 0,
\]
so $\widetilde L_G\succeq 0$ as well.
\end{proof}

\section{Matrix Components of $L^{\Delta}_{\text{reg}}$}
\label{appendix-LDelta}
The entries of the matrix $L^{\Delta}_{\text{reg}}$  defined in Equation~\eqref{eq:L_reg_matrix} are given by the expressions below.   We begin by defining the simplest non-zero components directly:

\begin{align*}
L_{12} &= -\frac{a}{2(3a^2 + 1)} & L_{15} &= -\frac{5a^2 + 3}{8(3a^2 + 1)} \\
L_{26} &= -\frac{1}{2(3a^2 + 1)} & L_{14} &= \frac{3a^4(s a^2 + s - 8)}{s^5(s a^2 - 8a^3 + s)} \\
L_{24} &= -a \cdot L_{14}
\end{align*}

For the intermediate entries, it is convenient to introduce the shared denominator $D_1 = 24s^5a^5 - 3s^6a^4 + 8s^5a^3 - 4s^6a^2 - s^6$, which yields:
\begin{align*}
L_{13} &= -\frac{a^2(a^8 - 8s a^7 + 4a^6 - 25s a^5 + 6a^4 + (72 - 20s)a^3 + 4a^2 + (24 - 3s)a + 1)}{D_1} \\[1em]
L_{23} &= \frac{a(a^8 - 17s a^7 + 4a^6 + (72 - 28s)a^5 + 6a^4 + (24 - 11s)a^3 + 4a^2 + 1)}{D_1} \\[1em]
L_{22} &= \frac{-36s a^9 - a^8 + (288 - 40s)a^7 - 4a^6 + (96 + 4s)a^5 - 6a^4 + 8s a^3 - 4a^2 - 1}{2s^5(3a^2 + 1)(8a^3 - s a^2 - s)}
\end{align*}

The remaining diagonal entries are substantially larger and are best expressed as rational functions of the form $L_{ii} = \frac{N_{ii}}{D_{ii}}$. For $L_{11}$, the numerator and denominator are given by:
\begin{align*}
N_{11} &= 104a^{16} + 296a^{14} + (312 - 13s^3)a^{13} + 240a^{12} + (1176 - 55s^3)a^{11} \\
&\quad - (1677s^3 + 16)a^{10} + (1680 - 90s^3)a^9 + (1353s^3 - 18520)a^8 \\
&\quad + (1104 - 70s^3)a^7 + (678s^3 - 6168)a^6 + (1127s^3 + 312)a^5 \\
&\quad - 70s^3a^4 + (381s^3 + 24)a^3 - 25s^3a^2 - 3s^3 \\[1em]
D_{11} &= 192 s^5 \left(a^2 + \frac{1}{3}\right)\left(s a^5 + (s - 16)a^3 + s a^2 + s\right)\left(a^3 - \frac{1}{8}s a^2 - \frac{1}{8}s\right)
\end{align*}

Similarly, for $L_{33}$, we have:
\begin{align*}
N_{33} &= -2a^2 \Big(16s^3a^{14} - 65a^{17} + 55s^3a^{12} - 191a^{15} + 1056s^3a^{11} - 321a^{14} \\
&\quad + 66s^3a^{10} - 165a^{13} - 585s^3a^9 - 999a^{12} + 28s^3a^8 - 3a^{11} - 414s^3a^7 \\
&\quad - 981a^{10} - 1730s^3a^6 + 9333a^9 + 92s^3a^5 - 163a^8 - 579s^3a^4 + 3315a^7 \\
&\quad + 14s^3a^3 + 221a^6 + 281a^5 - 3s^3a + 75a^4 + 143a^3 - 7a^2 + 24a - 1 \Big) \\[1em]
D_{33} &= s^5(3a^2 + 1)\left(s a^5 + s a^3 + s a^2 - 16a^3 + s\right)\left(s a^2 - 8a^3 + s\right)^2
\end{align*}

Finally, the terms for $L_{44}$ are:
\begin{align*}
N_{44} &= 3a^3(s a^2 + s - 8)\Big(s a^{13} + (5s - 8)a^{11} - 127s a^{10} + (10s - 48)a^9 \\
&\quad + (2056 - 123s)a^8 - (246s + 96)a^7 + (10s + 16)a^6 - (123s + 80)a^5 \\
&\quad + 10s a^4 + (129s - 24)a^3 + (5s - 16)a^2 + s - 8 \Big) \\[1em]
D_{44} &= 512 s^5 \left(s a^5 + (s - 16)a^3 + s a^2 + s\right)\left(a^3 - \frac{1}{8}s a^2 - \frac{1}{8}s\right)^2
\end{align*}

\end{appendices}

\printbibliography

\end{document}